\documentclass[12pt,a4paper]{article}
\usepackage{amssymb, amsmath}
\usepackage{mathtools}
\usepackage{hyperref}
\usepackage{tikz}
\usetikzlibrary{matrix,arrows}
\usepackage{tikz-cd}

\usepackage{todonotes}

\textheight9in
\textwidth6.5in
\hoffset-1.35cm
\voffset-1cm

%%%%%%%%%%%%%%%
%COMMANDS
%%%%%%%%%%%%%%%

%1) POZOR na \Im , PREDEFINOVAVA standardni prikaz pro imaginarni cast (se svabachovym fontem)
%2) \fld je ground field, musi byt definovany v hlavnim zdrojaku (pouziva se v \kspan)
%3) \Re predefinova prikaz z nejakeho balicku (svabachove R)
%4) \bot predefinovava kolmitko, ktere se ale da napsat take \perp

%\usepackage{amssymb}
%\usepackage{mathtools}

%%%%%%%%%%%%%%%%%%%%
%%%%%%%%%%%%%%%%%%%%
%GENERAL MATHEMATICS
%%%%%%%%%%%%%%%%%%%%
%%%%%%%%%%%%%%%%%%%%

%tensor
\newcommand{\ot}{\otimes}

%big tensor

%tensor power
\newcommand{\otexp}[2]{{#1^{\ot #2}}}

%category of vector spaces, optional arg used to type field explicitly as "Vec_k"
% \catVec -> Vec
% \catVec[k] -> Vec_k
\makeatletter
\newcommand{\catVec}{\@ifnextchar[{\bracketCVec}{\nobracketCVec}}
\def\bracketCVec[#1]{ \mathsf{Vec}_{#1} }
\def\nobracketCVec{ \mathsf{Vec} }
\makeatother

%suspenze = sipka nahoru, kdyz se pouzije s *, vysazi se scriptstyleove - s tim jsou bez hvezdicky problemy
\makeatletter
\newcommand{\downar}{\@ifnextchar*{\starDownAr}{\nostarDownAr}}
\def\starDownAr*{ {\raisebox{0.5pt}[0pt][0pt]{\ensuremath{\scriptstyle{\downarrow}}}} }
\def\nostarDownAr{ {\raisebox{1pt}[0pt][0pt]{\ensuremath{\downarrow}}} }
\makeatother

%desuspenze = sipka dolu, kdyz se pouzije s *, vysazi se scriptstyleove - s tim jsou bez hvezdicky problemy
\makeatletter
\newcommand{\upar}{\@ifnextchar*{\starUpAr}{\nostarUpAr}}
\def\starUpAr*{ {\raisebox{1pt}[0pt][0pt]{\ensuremath{\scriptstyle{\uparrow}}}} }
\def\nostarUpAr{ {\raisebox{1pt}[0pt][0pt]{\ensuremath{\uparrow}}} }
\makeatother

%identita, optional parameter prints tensor power
% \id -> 1
% \id[n] -> 1^{\otimes n}
\makeatletter
\newcommand{\id}{\@ifnextchar[{\tensorID}{\notensorID}}
\def\tensorID[#1]{ \otexp{\mathsf{1}}{#1} }
\def\notensorID{ \mathsf{1}	}
\makeatother
%%%%%

%mnozina
% \set{A}{B} -> {A | B}
\newcommand{\set}[2]{\left\{ #1 \ | \ #2 \right\} }
%%%%%

%diferencial stupne -1
\newcommand{\dd}{\partial}	
%%%%%

%kodiferencial stupne +1

%%%%%

%Hochschilduv (co)diferencial

%%%%%

%degree
% \dg{a} -> |a|
\newcommand{\dg}[1]{{\left| #1 \right|}}
%%%%%

%injekce
\newcommand{\into}{\hookrightarrow}
%%%%%

%surjekce
\newcommand{\onto}{\twoheadrightarrow}
%%%%%

%natural numbers
\newcommand{\N}{\mathbb{N}}
%%%%%

%integers
\newcommand{\Z}{\mathbb{Z}}
%%%%%

%rationals

%%%%%

%complex numbers

%%%%%

%real numbers

%%%%%

%quaternions %konflikt
%\newcommand{\H}{\mathbb{H}}
%%%%%

%octonions %konflikt
%\newcommand{\O}{\mathbb{O}}
%%%%%

%span
%\kspan{B} -> k<B> , pricemz \fld je ground field, musi byt definovany v hlavnim zdrojaku

%%%%%

%span (other)

%%%%%

%podtrzeni
%\newcommand{\ul}[1]{\underline{#1}}
%%%%%

%nadtrzeni
\newcommand{\ol}[1]{\overline{#1}}
%%%%%

%direktni suma
\newcommand{\op}{\oplus}
%%%%%

%velka direktni suma

%%%%%

%direct sum with space

%%%%%

%projekce
\newcommand{\pr}{\mathop{\mathrm{pr}}\nolimits}
%%%%%

%jadro
\newcommand{\Ker}{\mathop{\mathrm{Ker}}\nolimits}
%%%%%

%image
	
%%%%%%

%dg-modul

%%%%%

%derivations

%%%%%%

%kolimita
\newcommand{\colim}{\mathop{\mathrm{colim}}}
%%%%%

%Tor

%%%%%

%Ext

%%%%%

%Hom
\newcommand{\Hom}{\mathop{\mathrm{Hom}}\nolimits}
%%%%%

%automorphisms

%%%%%

%Morphisms of a category

%%%%%

%Objects of a category

%%%%%

%Opposite category

%%%%%

%skeleton of category

%%%%%

%Functor category

%%%%%

%Category of all categories

%%%%%

%isomorphism as an arrow

%%%%%

%quasi-isomorphism
%\quism -> -{^~}->
%\quism[#1] -> -{^~_{#1}}->
\makeatletter
\newcommand{\quism}{\@ifnextchar[{\quismName}{\quismNoName}}
\def\quismName[#1]{\xrightarrow[#1]{\sim}}
\def\quismNoName{\xrightarrow{\sim}}
\makeatother
%%%%%

%zkratka pro \xrightarrow
\newcommand{\To}{\xrightarrow}
%%%%%

%zkratka pro \xleftarrow

%%%%%

%equality sign with superscript

%%%%%

%Shuffle permutations

%%%%%

%Unshuffle permutations
\newcommand{\USh}[1]{\mathrm{USh}(#1)}
%%%%%

%Tangent bundle

%%%%%

%spectrum

%%%%%

%complex projective space

%%%%%

%real projective space

%%%%%

%norma

%%%%%

%cotangens

%%%%%

%arcus tangens

%%%%%

%Log s velkym L - mapr. hlavni hodnota komplexniho logaritmu

%%%%%

%tangens

%%%%%

%Jacobian

%%%%%

%divergence

%%%%%

%vnitrek - interior

%%%%%

%image

%%%%%

%residuum

%%%%%

%rank, hodnost

%%%%%

%trace

%%%%%

%general linear group

%%%%%

%orthogonal group
%\O je v kolizi s nejakym prikazem pro preskrtle velke O

%%%%%

%special orthogonal group

%%%%%

%spin group

%%%%%

%symplectic group

%%%%%

%unitary group

%%%%%

%special unitary group

%%%%%

%Steenrod square

%%%%%

%diagonal matrix

%%%%%

%absolute value

%%%%%

%real part

%%%%%

%signum
\newcommand{\sgn}[1]{\mathop{\mathrm{sgn}}\left(#1\right)}
%%%%%

%Greatest Common Divisor

%%%%%

%vectors or matrices

%%%%%

%%%%%%%%%%%%%%%%%%%%
%%%%%%%%%%%%%%%%%%%%
%OPERADS
%%%%%%%%%%%%%%%%%%%%
%%%%%%%%%%%%%%%%%%%%

%various generic operads

\newcommand{\oC}{\mathcal{C}}

\newcommand{\oE}{\mathcal{E}}
\newcommand{\oR}{\mathcal{R}}
%%%%%

%Associative operad
\newcommand{\oAss}{\mathcal{A}\mathit{ss}}
%%%%%

%Lie operad
\newcommand{\oLie}{\mathcal{L}\mathit{ie}}
%%%%%

%Commutative operad
\newcommand{\oCom}{\mathcal{C}\mathit{om}}
%%%%%

%endomorphism operad
\newcommand{\oEnd}[1]{{\mathcal{E}\mathit{nd}_{#1}}}	%endomorphisms operad
%%%%%

%C-collection

%%%%%

%C-\Sigma-module
%\smod -> \Sigma-module
%\smod[C] -> C-\Sigma-\module
%\smod* -> collection
%\smod*[C] -> C-collection
%!!! neni uplne idealni, ve verzi bez [C] bych se chtel zbavit te mezery na konci, aby se dalo napsat {\smod}ules -> \Sigma-modules
\makeatletter
\newcommand{\smod}
	{\@ifnextchar*{\smod@nonsigma}{\smod@sigma}}
\newcommand{\smod@nonsigma}[1]	%argument je hvezdicka, musi se nacist, aby pak \@ifnextchar kontrolovalo [
	{\@ifnextchar[{\smod@nonsigma@col}{\smod@nonsigma@uncol}}
\newcommand{\smod@sigma}
	{\@ifnextchar[{\smod@sigma@col}{\smod@sigma@uncol}}
\def\smod@sigma@col[#1]{{\mbox{\ensuremath{#1}-\ensuremath{\Sigma}-module}}}
\def\smod@nonsigma@col[#1]{{\mbox{\ensuremath{#1}-collection}}}
\def\smod@sigma@uncol{\mbox{\ensuremath{\Sigma}-module} }
\def\smod@nonsigma@uncol{{\mbox{collection}} }
\makeatother
%%%%%

%C-coloured operad
%\colop{C} -> C-operad

%%%%%

%arity
\newcommand{\ar}[1]{{\mathrm{ar}(#1)}}
%%%%%

%Free operad, optional argument for weight (= number of vertices)
% \Fr{V} -> F(V)
% \Fr[k]{V} -> F^k(V)
\makeatletter
\newcommand{\Fr}{\@ifnextchar[{\Fr@weight}{\Fr@noweight}}
\def\Fr@weight[#1]#2{\mathbb{F}^{#1}(#2)}
\def\Fr@noweight#1{\mathbb{F}(#1)}
\makeatother
%%%%%

%volny modul nad operadou #1 generovany kolekci #2
% \FrM{A}{E} -> A<E>

%%%%%

%inicialni element

%%%%%

%input

%%%%%

%output

%%%%%

%bar-cobar constuction
	
%%%%%

%bar construction

%%%%%

%cobar construction

%%%%%

%operadic composition
\newcommand{\oo}{\circ}
%%%%%

%operadic composition

%%%%%

%operadic composition	
%\newcommand{\oooo}[4]{\sideset{^{#3}_{#1}}{^{#4}_{#2}}{\mathop{\oo}}}	%(aligns the prescripts badly)
	%requires mathtools package
%%%%%

%operadic contraction

%%%%%

%operadic module left composition

%%%%%

%operadic module right composition

%%%%%

%free product of operads = coproduct of operads
\newcommand{\fpr}{\mathop{\mathrm{*}}\nolimits}	
%%%%%

%velky volny produkt operad, NEFUNGUJE s \limits !!!

%%%%%

%Koszul dual cooperad
\newcommand{\Kdual}[1]{#1^{\textrm{<}}}
%%%%%

%operadic suspension

%%%%%

%TJ-grading
%\TJ* -> TJ
%\TJ{#1} -> \TJ(#1)
\makeatletter
\newcommand{\TJ}{\@ifnextchar*{\TJ@star}{\TJ@nostar}}
\def\TJ@nostar#1{\textrm{TJ}(#1)}
\def\TJ@star*{\textrm{TJ}}
\makeatother
%%%%%

%A_oo algebra

%%%%%

%L_oo algebra

%%%%%

%%%%%%%%%%%%%%%
%%%%%%%%%%%%%%%
%VARIOUS
%%%%%%%%%%%%%%%
%%%%%%%%%%%%%%%

%DEBUG
%zatim jeste hodne provizorni, chtelo by p{11.8cm} napsat nejak obecne
%\newcommand{\debug}[1]{\hspace*{\fill}\vspace{3ex}\\ \begin{tabular}{|c|}\hline\textbf{DEBUG}\\ \hline\end{tabular} \begin{tabular}{|p{11.8cm}}#1\end{tabular}\hspace*{\fill}\vspace{3ex}\\ }

%%%%%

%uvozovky
%\newcommand{\uv}[1]{``#1''}
%%%%%

\newcommand{\fld}{\ensuremath{\Bbbk}}
\newcommand{\oP}{\mathcal{P}}	%generic operad O
\newcommand{\oDP}[1]{\mathcal{D}_k\mathcal{P}}	%operad A with a derivation
\newcommand{\oDR}[1]{\mathcal{D}_k\mathcal{R}}
\newcommand{\ul}[1]{\underline{#1}}	%underline
\newcommand{\cSmod}{\mathsf{\Sigma}}				%{\mbox{\ensuremath{\mathsf{\Sigma\!\!-\!\!mod}}}}
\newcommand{\cSmodft}{\mathsf{\Sigma_{f.t.}}}			%{\mbox{\ensuremath{\mathsf{\Sigma\!\!-\!\!mod_{f.t.}}}}}
\newcommand{\Op}{\mathsf{Op}}
\newcommand{\Opft}{\mathsf{Op_{f.t.}}}
\newcommand{\CoOp}{\mathsf{CoOp}}
\newcommand{\CoOpft}{\mathsf{CoOp_{f.t.}}}
\newcommand{\oQ}{\mathcal{Q}}
\newcommand{\gr}{\textrm{gr}}

\newcommand{\btheta}{{\boldsymbol{\theta}}}
\newcommand{\bm}{{\boldsymbol{m}}}

\newcommand{\CoDer}{\mathop{\mathrm{CoDer}}\nolimits}
\newcommand{\oS}{\mathcal{S}}

%%%%%%%%%%%%%%%
%PICTURES
%%%%%%%%%%%%%%%
%\input{ooDer_pic.tex}
%%%

%%%%%%%%%%%%%%%
%ENVIRONMENTS
%%%%%%%%%%%%%%%
%%%%%%%%%%%%%%
%%%%%%%%%%%%%%
%%%%%%%%%%%%%%
\usepackage{amsthm}

\swapnumbers
\theoremstyle{definition}
\newtheorem{theorem}{Theorem}[section]
\newtheorem{proposition}[theorem]{Proposition}

\newtheorem{lemma}[theorem]{Lemma}

\newtheorem{example}[theorem]{Example}
\newtheorem{remark}[theorem]{Remark}
\newtheorem{definition}[theorem]{Definition}

%%%

\title{Homotopy Derivations}
\author{
Martin Doubek\thanks{The author was supported by GA\v{C}R P201/13/27340P.}, \medskip \\
\textit{Charles University,} \\
\textit{Faculty of Mathematics and Physics,}\\
\textit{Prague}\\
\texttt{martindoubek@seznam.cz}
\and
Tom Lada, \medskip \\
\textit{North Carolina State University} \\
\textit{Mathematics Dept}\\
\textit{Raleigh, NC}\\
\texttt{lada@math.ncsu.edu}
}
%\date{\today}
\date{}

\begin{document}

\maketitle
\abstract{
We define a strong homotopy derivation of (cohomological) degree $k$ of a strong homotopy algebra over an operad $\oP$.
This involves resolving the operad obtained from $\oP$ by adding a generator with ``derivation relations''.
For a wide class of Koszul operads $\oP$, in particular $\oAss$ and $\oLie$, we describe the strong homotopy derivations by coderivations and show that they are closed under the Lie bracket.
We show that symmetrization of a strong homotopy derivation of an $A_\infty$ algebra yields a strong homotopy derivation of the symmetrized $L_\infty$ algebra.
We give examples of strong homotopy derivations generalizing inner derivations.
}

\tableofcontents

\section{Introduction}

An early account of degree $k$ derivations for graded associative and graded Lie algebras by Gerstenhaber may be found in \cite{G}.
Kajiura and Stasheff \cite{KS} defined strong homotopy derivations of degree $+1$ for $A_\infty$ algebras and Tolley \cite{T} did the same for $L_\infty $ algebras.  In this note, we study degree $k$ strong homotopy derivations of $A_\infty$ and $L_\infty$ algebras.  This will take place in the more general context of strong homotopy derivations of strong homotopy $\mathcal{P}$ algebras (a.k.a. $\oP_\infty$ algebras) where $\mathcal{P}$ is an arbitrary operad.

The notion of strong homotopy derivation can be easily defined in terms of resolutions of operads.
Surprisingly, if the defining relations of the $\oP_\infty$ algebra are known explicitly, the defining relations of the strong homotopy derivation can be made completely explicit too.
We emphasize that Koszulness of $\oP$ doesn't play any role here.
The resolution has been constructed in \cite{IB} and later developed in \cite{GS}, although for a different purpose than to make strong homotopy derivations explicit.
We need to do small technical adaptations of this construction.
This is contained in Section \ref{SECGT}, and there the reader is assumed to be familiar with basic notions of operad theory as in \cite{MSS} and \cite{LV}.

In Section \ref{SECAoo}, we make these derivations explicit for $A_\infty$ algebras by choosing the operad $\mathcal{P} $ to be $\mathcal{A}ss$, the operad for associative algebras.  We then define and construct \emph{inner} such derivations for $A_\infty$ algebras.

We then specialize the operad $\mathcal{P}$ to $\mathcal{L}ie$, the operad for Lie algebra, in Section \ref{SECLoo}.  We then arrive at our definition of strong homotopy derivations of $L_\infty$ algebras.  \emph{Inner} derivations of these algebras are then constructed.

Section \ref{SECSymComp} shows that the usual symmetrization of $A_\infty$ data yields $L_\infty$ data.  Specifically, we show that the symmetrization of a strong homotopy derivation of an $A_\infty$ algebra gives us a strong homotopy derivation of the $L_\infty$ algebra obtained by symmetrization of the $A_\infty$ algebra structure.  We also define the composition of these derivations by translating the data to the coalgebra level.

In Appendix, we discuss the case $\oP$ being Koszul.
Under some finiteness assumptions, we generalize the coderivation description from the $A_\infty$ and $L_\infty$ cases.
On the way, we reprove some earlier results of the paper by a direct application of the Koszul theory.
%e.g. Section $3.7$ of \cite{MSS}.

We thank Jim Stasheff and Martin Markl for helpful comments on earlier versions of the paper.

%\todo[inline]{Homotopy inner derivation for any homotopy $\oP$ algebra? What is an inner derivation for algebra with more than one type of operations (e.g. Poisson or Gerstenhaber)?}

%\todo[inline]{Can the cotangent complex be computed by homotopy derivations?}

%%%%%%%%%%%
%%%%%%%%%%%
%%%%%%%%%%%

\section{General theory} \label{SECGT}

The plan of the section is as follows:
Given an operad $\oP$, we define an operad $\oDP{k}$ such that $\oDP{k}$ algebras are $\oP$ algebras together with a degree $k$ derivation of the algebra structure.
Given a resolution $\oR$ of $\oP$, we explicitly construct a resolution $\oDR{k}$ of $\oDP{k}$.
We then define strong homotopy derivation of degree $k$ to be a $\oDR{k}$ algebra, or rather a certain infinite set of operations in the $\oDR{k}$ algebra.

We recall the construction of \cite{IB} and \cite{GS} and briefly indicate changes needed for our purposes.
We closely follow \cite{GS}, where we wish to
\begin{enumerate}
\item specialize from many to single color,
\item generalize from $0$ to arbitrary degree of the derivation,
\item generalize from non-$\Sigma$ (a.k.a. non-symmetric) to $\Sigma$ (symmetric) operads.
\end{enumerate}
We also switch from homological to cohomological grading.
The results are stated in a self-contained way.

All dg vector spaces live over a fixed field $\fld$ of characteristics $0$; the differential has degree $+1$.
By $\Sigma$ module we mean a $\Sigma$ module in the category of dg vector spaces with degree $0$ dg maps.
Operad means a $\Sigma$ operad in the category of dg vector spaces.
Let $\oo_i:\oP(m)\ot\oP(n)\to\oP(m+n-1)$ be the structure operation of an operad $\oP$.
In particular, for a dg vector space $A$, the endomorphism operad $\oP=\oEnd{A}$ and $f:A^{\ot m}\to A,\ g:A^{\ot n}\to A$, we define
$$f\oo_i g:=f(\id^{\ot i-1}\ot g\ot\id^{\ot m-i}).$$

\begin{definition} \label{DEFDerivation}
Let $\oP$ be an operad and $A$ a dg vector space with differential $d$.
A $\oP$ algebra $A$ is given by an operad morphism $\alpha:\oP\to\oEnd{A}$.
Let $k\in\Z$.
A \emph{degree $k$ derivation of $A$} is a degree $k$ linear map $\theta:A\to A$ such that
\begin{gather*}
d\oo_1\theta=(-1)^k\theta\oo_1 d, \\
\theta\oo_1\alpha(p)=(-1)^{k\dg{p}}\sum_{i=1}^n \alpha(p)\oo_i\theta
\end{gather*}
for arbitrary $p\in\oP(n)$.
Evaluated on elements $a_1,\ldots,a_n\in A$, the second equation reads
$$\theta(\alpha(p)(a_1,\ldots,a_n))=(-1)^{k\dg{p}}\sum_{i=1}^n (-1)^{k(\dg{a_1}+\cdots+\dg{a_{i-1}})}\alpha(p)(a_1,\ldots,a_{i-1},\theta(a_i),a_{i+1},\ldots,a_n).$$
\end{definition}

Denote $\Fr{X}$ the free operad generated by a $\Sigma$ module $X$.
Denote $\fpr$ the coproduct in the category of operads, a.k.a. free product of operads.
Let $\oP$ and $\oQ$ be operads.
Recall that an operadic derivation $D:\oP\to\oQ$ is a morphism of the underlying $\Sigma$ modules satisfying $D\oo_i=\oo_i(D\ot\id+\id\ot D)$ for any $i$.
Derivations $\Fr{X}\to\oQ$ are in bijection with $\Sigma$ module morphisms $X\to\oQ$ via the restriction.
Similarly, derivations $\oP'\fpr\oP''\to\oQ$ are in bijections with pairs of derivations $\oP'\to\oQ$ and $\oP''\to\oQ$.

\begin{definition} \label{DEFDkP}
Let $\oP$ be an operad with a differential $\dd_\oP$.
Consider a $\Sigma$ module $\Phi := \fld\{\phi\}$, the $\fld$ linear span of the set $\{\phi\}$, such that $\phi$ is of arity 1 and degree $k$.
Let $\mathfrak{D}$ be the ideal in $\oP \fpr \Fr{\Phi}$ generated by all elements
\begin{gather*} %\label{relator}
\phi \oo_1 p - (-1)^{k\dg{p}}\sum_{i=1}^{n} p\oo_i\phi
\end{gather*}
for $p\in\oP(n)$.
Denote $$\oDP{k} := \left( \frac{\oP \fpr \Fr{\Phi}}{\mathfrak{D}} , \dd_{\oDP{k}} \right),$$
where $\dd_{\oDP{k}}$ is the degree $1$ operadic derivation given by the formulas
$$\dd_{\oDP{k}}(p) := \dd_\oP(p),\quad \dd_{\oDP{k}}(\phi) := 0$$
for $p\in\oP$.
\end{definition}

\begin{proposition}
A $\oDP{k}$ algebra on a dg vector space $A$ is a pair $(A,\theta)$, where $A$ is an $\oP$ algebra and $\theta$ is a derivation of the $\oP$ algebra $A$.
\end{proposition}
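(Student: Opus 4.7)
The plan is to unwind the universal properties packed into Definition \ref{DEFDkP} and compare the resulting data with Definition \ref{DEFDerivation}. A $\oDP{k}$-algebra structure on $A$ is by definition a dg operad morphism $\beta \colon \oDP{k} \to \oEnd{A}$. By the universal property of the quotient, giving such $\beta$ is equivalent to giving a dg operad morphism $\widetilde\beta \colon \oP \fpr \Fr{\Phi} \to \oEnd{A}$ that annihilates the generators of $\mathfrak{D}$. By the universal property of the free product recalled just before Definition \ref{DEFDkP}, $\widetilde\beta$ is in turn equivalent to a pair of operad morphisms $\alpha \colon \oP \to \oEnd{A}$ and $\gamma \colon \Fr{\Phi} \to \oEnd{A}$; and by the universal property of the free operad, $\gamma$ is uniquely determined by its restriction to $\Phi = \fld\{\phi\}$, i.e., by a single degree $k$ element $\theta := \gamma(\phi) \in \oEnd{A}(1)$, equivalently a degree $k$ linear map $\theta \colon A \to A$.

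Next I would translate the remaining pieces. Annihilation of $\mathfrak{D}$ evaluated on its defining generator $\phi \oo_1 p - (-1)^{k\dg{p}} \sum_{i=1}^n p \oo_i \phi$ reads
$$\theta \oo_1 \alpha(p) = (-1)^{k\dg{p}} \sum_{i=1}^n \alpha(p) \oo_i \theta$$
for every $p \in \oP(n)$, which is exactly the second equation of Definition \ref{DEFDerivation}. The differential-compatibility condition $\beta\, \dd_{\oDP{k}} = d\, \beta$ decomposes along the same splitting: on $\oP$ it reduces to $\alpha\, \dd_\oP = d\, \alpha$, so $\alpha$ defines a genuine $\oP$-algebra structure on $A$; evaluated on $\phi$, where $\dd_{\oDP{k}}(\phi) = 0$, it becomes the condition that $\theta$ has vanishing differential in $\oEnd{A}$, which unfolds to $d \oo_1 \theta = (-1)^k \theta \oo_1 d$, the remaining axiom of Definition \ref{DEFDerivation}.

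Each step above is reversible, so the correspondence $\beta \leftrightarrow (\alpha, \theta)$ is a bijection and the proposition follows. I do not expect a genuine obstacle here: the argument amounts to bookkeeping of universal properties together with the standard sign for the induced differential on $\oEnd{A}$. The one point worth noting is that the derivation relation must hold for \emph{all} $p \in \oP$ and not merely on a generating subset, but this is built in because $\mathfrak{D}$ is generated by the corresponding element for every $p \in \oP$, so no closure-under-composition argument is required.
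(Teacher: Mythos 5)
Your proof is correct and follows essentially the same route as the paper, which simply remarks that $\theta$ is the image of $\phi$ under $\oDP{k}\to\oEnd{A}$ and leaves the universal-property bookkeeping implicit. You have merely spelled out the details (quotient, free product, free operad, and the compatibility with differentials) that the paper's one-line proof takes for granted.
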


\begin{proof}
$\theta$ is the image of $\phi$ under $\oDP{k}\to\oEnd{A}$.
\end{proof}

For $m\in\Z$, let $\upar^m$ be the $m$-fold suspension functor of dg vector spaces or $\Sigma$ modules:
$(\upar^m X)^n:=X^{n-m}$ and $\upar^m$ also denotes the degree $m$ map $X\to\upar^m X$.
Recall that an operad of the form $(\Fr{X},\dd)$, where the differential $\partial$ needn't come from the free operad construction, is called quasi-free.

\begin{definition} \label{DEFoDR}
Let
\begin{gather*} 
\oR := (\Fr{X},\dd_\oR) \To{\rho_\oR} (\oP,\dd_{\oP}),
\end{gather*}
be a quasi-free resolution, where $X$ is a $\Sigma$ module.
Consider the free operad
$$\oDR{k} := \Fr{X\op\Phi\op\ul{X}},$$
where 
$$\underline{X}:=\upar^{k-1} X.$$
We denote by $\ul{x}$ the element $\upar^{k-1} x\in\ul{X}$ corresponding to $x\in X$.
To describe the differential, let $s:\Fr{X} \to \oDR{k}$ be a degree $k-1$ operadic derivation determined by
$$s(x) := \ul{x}\in\ul{X} \mbox{ for }x\in X.$$
Then define a degree $1$ derivation $\dd_{\oDR{k}} : \oDR{k} \to \oDR{k}$ by
\begin{align}
&\dd_{\oDR{k}}(x) := \dd_\oR(x), \nonumber \\
&\dd_{\oDR{k}}(\phi) := 0, \nonumber \\ 
&\dd_{\oDR{k}}(\ul{x}) := \phi \oo_1 x -(-1)^{k\dg{x}} \sum_{i=1}^n x\oo_i \phi \ -(-1)^k s(\dd_\oR(x)). \label{DiffOnoDR} %\nonumber
\end{align}
\end{definition}

Let's clarify the last formula by an example:
Assume $\dd_\oR(x) = (x_1\oo_1 x_2)\oo_4 x_3+\cdots$.
In the standard pictorial notation, this is
$$\dd_\oR
\begin{tikzpicture}[baseline=-\the\dimexpr\fontdimen22\textfont2\relax] %,scale=0.5
\draw (0,.5)--(0,-.5);
\draw (0,0)--(-.25,-.5);
\draw (0,0)--(-.5,-.5);
\draw (0,0)--(.5,-.5);
\draw (0,0)--(.25,-.5);
\filldraw (0,0) circle (3pt);
\node at (0,0) [right]{$\scriptstyle{x}$};
\end{tikzpicture}
=
\begin{tikzpicture}[baseline=-\the\dimexpr\fontdimen22\textfont2\relax] %,scale=0.5
\draw (0,1.5)--(0,.5);
\draw (0,1)--(-.5,.5)--(-.5,0)--(-1,-.5);
\draw (-.5,0)--(0,-.5);
\draw (0,1)--(.5,.5)--(.5,-1)--(0,-1.5);
\draw (.5,-1)--(1,-1.5);
\filldraw (0,1) circle (3pt);
\node at (0,1) [left]{$\scriptstyle{x_1}$};
\filldraw (-.5,0) circle (3pt);
\node at (-.5,0) [left]{$\scriptstyle{x_2}$};
\filldraw (.5,-1) circle (3pt);
\node at (.5,-1) [right]{$\scriptstyle{x_3}$};
\end{tikzpicture}
+\cdots.
$$
Then
\begin{gather*}
\dd_\oR
\begin{tikzpicture}[baseline=-\the\dimexpr\fontdimen22\textfont2\relax] %,scale=0.5
\draw (0,.5)--(0,-.5);
\draw (0,0)--(-.25,-.5);
\draw (0,0)--(-.5,-.5);
\draw (0,0)--(.5,-.5);
\draw (0,0)--(.25,-.5);
\filldraw[fill=white,draw=black] (0,0) circle (3pt);
\node at (0,0) [right]{$\scriptstyle{\ul{x}}$};
\end{tikzpicture}
=
\begin{tikzpicture}[baseline=-\the\dimexpr\fontdimen22\textfont2\relax] %,scale=0.5
\draw (0,1)--(0,0);
\draw (0,0)--(0,-1);
\draw (0,-.5)--(-.25,-1);
\draw (0,-.5)--(-.5,-1);
\draw (0,-.5)--(.5,-1);
\draw (0,-.5)--(.25,-1);
\filldraw[fill=white,draw=black] (0,.5) circle (3pt);
\node at (0,.5) [right]{$\scriptstyle{\phi}$};
\filldraw (0,-.5) circle (3pt);
\node at (0,-.5) [right]{$\scriptstyle{x}$};
\end{tikzpicture}
-(-1)^{k\dg{x}} \bigg(
\begin{tikzpicture}[baseline=-\the\dimexpr\fontdimen22\textfont2\relax] %,scale=0.5
\draw (-.5,0)--(-.5,-1);
\draw (0,1)--(0,0);
\draw (0,.5)--(-.25,0);
\draw (0,.5)--(-.5,0);
\draw (0,.5)--(.5,0);
\draw (0,.5)--(.25,0);
\filldraw[fill=white,draw=black] (-.5,-.5) circle (3pt);
\node at (-.5,-.5) [right]{$\scriptstyle{\phi}$};
\filldraw (0,.5) circle (3pt);
\node at (0,.5) [right]{$\scriptstyle{x}$};
\end{tikzpicture}
+
\begin{tikzpicture}[baseline=-\the\dimexpr\fontdimen22\textfont2\relax] %,scale=0.5
\draw (-.25,0)--(-.25,-1);
\draw (0,1)--(0,0);
\draw (0,.5)--(-.25,0);
\draw (0,.5)--(-.5,0);
\draw (0,.5)--(.5,0);
\draw (0,.5)--(.25,0);
\filldraw[fill=white,draw=black] (-.25,-.5) circle (3pt);
\node at (-.25,-.5) [right]{$\scriptstyle{\phi}$};
\filldraw (0,.5) circle (3pt);
\node at (0,.5) [right]{$\scriptstyle{x}$};
\end{tikzpicture}
+
\begin{tikzpicture}[baseline=-\the\dimexpr\fontdimen22\textfont2\relax] %,scale=0.5
\draw (0,0)--(0,-1);
\draw (0,1)--(0,0);
\draw (0,.5)--(-.25,0);
\draw (0,.5)--(-.5,0);
\draw (0,.5)--(.5,0);
\draw (0,.5)--(.25,0);
\filldraw[fill=white,draw=black] (0,-.5) circle (3pt);
\node at (0,-.5) [right]{$\scriptstyle{\phi}$};
\filldraw (0,.5) circle (3pt);
\node at (0,.5) [right]{$\scriptstyle{x}$};
\end{tikzpicture}
+
\begin{tikzpicture}[baseline=-\the\dimexpr\fontdimen22\textfont2\relax] %,scale=0.5
\draw (.25,0)--(.25,-1);
\draw (0,1)--(0,0);
\draw (0,.5)--(-.25,0);
\draw (0,.5)--(-.5,0);
\draw (0,.5)--(.5,0);
\draw (0,.5)--(.25,0);
\filldraw[fill=white,draw=black] (.25,-.5) circle (3pt);
\node at (.25,-.5) [right]{$\scriptstyle{\phi}$};
\filldraw (0,.5) circle (3pt);
\node at (0,.5) [right]{$\scriptstyle{x}$};
\end{tikzpicture}
+
\begin{tikzpicture}[baseline=-\the\dimexpr\fontdimen22\textfont2\relax] %,scale=0.5
\draw (.5,0)--(.5,-1);
\draw (0,1)--(0,0);
\draw (0,.5)--(-.25,0);
\draw (0,.5)--(-.5,0);
\draw (0,.5)--(.5,0);
\draw (0,.5)--(.25,0);
\filldraw[fill=white,draw=black] (.5,-.5) circle (3pt);
\node at (.5,-.5) [right]{$\scriptstyle{\phi}$};
\filldraw (0,.5) circle (3pt);
\node at (0,.5) [right]{$\scriptstyle{x}$};
\end{tikzpicture}
\bigg)
+{}\\
-(-1)^{k} \bigg(
\begin{tikzpicture}[baseline=-\the\dimexpr\fontdimen22\textfont2\relax] %,scale=0.5
\draw (0,1.5)--(0,.5);
\draw (0,1)--(-.5,.5)--(-.5,0)--(-1,-.5);
\draw (-.5,0)--(0,-.5);
\draw (0,1)--(.5,.5)--(.5,-1)--(0,-1.5);
\draw (.5,-1)--(1,-1.5);
\filldraw[fill=white,draw=black] (0,1) circle (3pt);
\node at (0,1) [right]{$\scriptstyle{\ul{x}_1}$};
\filldraw (-.5,0) circle (3pt);
\node at (-.5,0) [right]{$\scriptstyle{x_2}$};
\filldraw (.5,-1) circle (3pt);
\node at (.5,-1) [right]{$\scriptstyle{x_3}$};
\end{tikzpicture}
+(-1)^{(k-1)\dg{x_1}}
\begin{tikzpicture}[baseline=-\the\dimexpr\fontdimen22\textfont2\relax] %,scale=0.5
\draw (0,1.5)--(0,.5);
\draw (0,1)--(-.5,.5)--(-.5,0)--(-1,-.5);
\draw (-.5,0)--(0,-.5);
\draw (0,1)--(.5,.5)--(.5,-1)--(0,-1.5);
\draw (.5,-1)--(1,-1.5);
\filldraw (0,1) circle (3pt);
\node at (0,1) [right]{$\scriptstyle{x_1}$};
\filldraw[fill=white,draw=black] (-.5,0) circle (3pt);
\node at (-.5,0) [right]{$\scriptstyle{\ul{x}_2}$};
\filldraw (.5,-1) circle (3pt);
\node at (.5,-1) [right]{$\scriptstyle{x_3}$};
\end{tikzpicture}
+(-1)^{(k-1)(\dg{x_1}+\dg{x_2})}
\begin{tikzpicture}[baseline=-\the\dimexpr\fontdimen22\textfont2\relax] %,scale=0.5
\draw (0,1.5)--(0,.5);
\draw (0,1)--(-.5,.5)--(-.5,0)--(-1,-.5);
\draw (-.5,0)--(0,-.5);
\draw (0,1)--(.5,.5)--(.5,-1)--(0,-1.5);
\draw (.5,-1)--(1,-1.5);
\filldraw (0,1) circle (3pt);
\node at (0,1) [right]{$\scriptstyle{x_1}$};
\filldraw (-.5,0) circle (3pt);
\node at (-.5,0) [right]{$\scriptstyle{x_2}$};
\filldraw[fill=white,draw=black] (.5,-1) circle (3pt);
\node at (.5,-1) [right]{$\scriptstyle{\ul{x}_3}$};
\end{tikzpicture}
+\cdots \bigg).
\end{gather*}

\begin{theorem} \label{THMResOfDer}
If $X$ is nonpositively graded\footnote{Recall we use the \emph{co}homological grading.}, then the map $\rho_{\oDR{k}}:\oDR{k}\to\oDP{k}$, defined by
\begin{align*}
&\rho_{\oDR{k}} (x) := \rho_{\oR}(x), \\
&\rho_{\oDR{k}} (\phi) := \phi, \\
&\rho_{\oDR{k}} (\ul{x}) := 0.
\end{align*}
is a quism (a.k.a. quasi-isomorphism), hence a quasi-free resolution of the operad $\oDP{k}$.
\end{theorem}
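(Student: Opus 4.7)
The plan is to prove $\rho_{\oDR{k}}$ is a quism by first checking it is a well-defined dg operad map, then decomposing along a weight grading, and finally analyzing each weight by a filtration argument reducing to the given quism $\rho_\oR : \oR \to \oP$. \textbf{Well-definedness} is routine; the only nontrivial check is that $\rho_{\oDR{k}}(\dd_{\oDR{k}}(\ul{x})) = 0$. The terms $\phi \oo_1 \rho_\oR(x) - (-1)^{k\dg{x}}\sum \rho_\oR(x) \oo_i \phi$ lie in the ideal $\mathfrak{D}$ and hence vanish in $\oDP{k}$, while $s(\dd_\oR x)$ is a sum of trees each containing a vertex from $\ul{X}$ and is hence killed by $\rho_{\oDR{k}}$.

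\textbf{Weight decomposition.} Assign to each tree of $\oDR{k}$ the weight $w$ equal to the total number of its $\phi$- and $\ul{X}$-vertices, and to each element of $\oDP{k}$ the weight equal to its number of $\phi$-vertices. Inspection of \eqref{DiffOnoDR} and of the generators of $\mathfrak{D}$ shows both are weight-preserving: the two $\phi$-creating terms of $\dd_{\oDR{k}}(\ul{x})$ trade a $\ul{X}$-vertex for a $\phi$-vertex, while $s(\dd_\oR x)$ preserves both counts. The problem thus reduces to showing the restriction of $\rho_{\oDR{k}}$ to each weight-$w$ subcomplex is a quism; for $w = 0$ this is precisely the hypothesis $\rho_\oR : \oR \to \oP$ being a quism.

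\textbf{Positive weights.} The key structural observation is that $\oDR{k}$ is the operadic Koszul--Tate / Sullivan extension of $\oR \fpr \Fr{\Phi}$: one adjoins generators $\ul{x}$ whose differential is essentially the defining relation of $\mathfrak{D}$, plus the correction $-(-1)^k s(\dd_\oR x)$ forced by $\dd^2 = 0$ in the presence of a nontrivial $\dd_\oR$. To make this rigorous, filter each weight-$w$ subcomplex of $\oDR{k}$ by the $\ul{X}$-count (an increasing filtration that, thanks to the nonpositivity hypothesis on $X$, is finite in each bidegree of arity and cohomological degree), and run the associated spectral sequence. On $E_1$ the given quism $\rho_\oR$ is used slot-wise between the frozen $\phi$- and $\ul{X}$-vertices to replace $\oR$-cohomology by $\oP$; on $E_2$ the surviving differential is exactly the derivation-ideal relation, and a standard Koszul--Tate argument, based on the fact that the Leibniz closure of the relations $\phi\oo_1 \rho_\oR(x) - (-1)^{k\dg{x}}\sum \rho_\oR(x)\oo_i\phi$ with $x\in X$ generates the full ideal $\mathfrak{D}$, shows that the outcome is concentrated in $\ul{X}$-degree $0$ and coincides with the weight-$w$ part of $\oDP{k}$. \textbf{The main obstacle} is the page-by-page identification of this spectral sequence, in particular tracking the signs arising from the suspension $\ul{X} = \upar^{k-1} X$ and from the symmetric-group actions; the latter is the new ingredient compared with the non-$\Sigma$ treatment of \cite{GS}, while convergence is guaranteed by nonpositivity.
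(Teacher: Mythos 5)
Your overall architecture (reduce to a spectral sequence comparison against $\rho_\oR$, treat the derivation relation by a Koszul--Tate-type acyclicity statement) is the right shape, and the weight decomposition by the number of $\phi$- and $\ul{X}$-vertices is legitimate: both differentials and $\rho_{\oDR{k}}$ preserve it, and weight $0$ is exactly the hypothesis on $\rho_\oR$. But the positive-weight step has a genuine gap, and it sits exactly where the real difficulty of the theorem lies. If you filter by the $\ul{X}$-count, the associated graded differential on a generator $\ul{x}$ is \emph{not} zero: by \eqref{DiffOnoDR} it is $-(-1)^k s(\dd_\oR(x))$, which preserves the $\ul{X}$-count and therefore survives to $E_0$. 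So the $\ul{X}$-vertices are not ``frozen'', and the $E_1$-page is not computed by applying $\rho_\oR$ slot-wise via K\"unneth. The subcomplex of trees containing $\ul{X}$-vertices, with the differential $\dd_\oR + \pm s\dd_\oR$, is (an assembly of copies of) the complex of operadic K\"ahler differentials of $\oR$ --- a cotangent-complex-type object whose cohomology you have not identified and which is not given by a slot-wise K\"unneth argument. Consequently the subsequent claim that ``a standard Koszul--Tate argument'' shows the $E_2$-page is concentrated in $\ul{X}$-degree $0$ and equals the weight-$w$ part of $\oDP{k}$ is unsupported: the relations $\phi\oo_1 x - (-1)^{k\dg{x}}\sum_i x\oo_i\phi$ are not a regular sequence in any standard sense, and the acyclicity you need is precisely the nontrivial combinatorial content of the theorem, not a citation to a standard lemma. (A symptom of the problem: your argument never actually uses the nonpositivity hypothesis on $X$, since in fixed weight the $\ul{X}$-count is bounded.)

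The paper resolves this by taking the two cohomology computations in the \emph{opposite} order. Its first filtration ($\gr_1$) is rigged so that on the associated graded the \emph{only} surviving differential is the $\phi$-creating part $\ul{x}\mapsto\phi\oo_1 x-(-1)^{k\dg{x}}\sum_i x\oo_i\phi$, with both $\dd_\oR$ and $s\dd_\oR$ strictly dropping filtration (this is where nonpositivity of $X$ is used, to bound the filtration below). The resulting $E^0$-cohomology statement \eqref{EQHomologyOfE0} is then a statement about \emph{free} operads only, independent of $\dd_\oR$, and even so it is not immediate: it is proved by an induction over the depth of the $\ul{X}$-generators (the $\oDR{k}^n$ and $\oQ^n$), using the identification $\oQ^{n+1}\cong X\oo\oQ^n$ and a second auxiliary filtration by $\gr_2$. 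Only on $E^1$, after the $\ul{X}$-generators have been cancelled against the relations and the page identified with $\Fr{X}\oo\Fr{\Phi}$, does the quism $\rho_\oR$ enter, via the K\"unneth formula for the composite product $\Fr{X}\oo\Fr{\Phi}\to\oP\oo\Fr{\Phi}$. To repair your proof you would either have to switch to this order of filtration, or else genuinely compute the cohomology of the K\"ahler-differentials complex appearing on your $E_1$-page --- which is a substantial additional input, not a routine step.
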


\begin{proof}
$\dd_{\oDR{k}}^2 = 0$ follows as in Lemma $3.4$ of \cite{GS}, except the sign check is slightly more difficult with degrees of $\phi$ and $\ul{x}$ shifted.

To verify that $\rho_{\oDR{k}}$ is a quism, we slightly modify the proof of Theorem $3.5$ of \cite{GS}.
For reader's convenience, we present full details here.
Additional explanations can be found in loc. cit.

We introduce an additional ``$\gr_1$'' grading
$$\gr_1(x):=-\dg{x},\quad \gr_1(\phi):=1,\quad\gr_1(\ul{x}):=-\dg{\ul{x}}+1$$
on the generators from $\oDR{k}$ and extend it by requiring the operadic composition to be of $\gr_1$ degree $0$.
An exhaustive filtration $0=\mathfrak{F}_{-1}\subset\mathfrak{F}_0\subset\cdots$ of $\oDR{k}$ is defined by
$$\mathfrak{F}_n:=\set{z\in\oDR}{\gr_1(z)\leq n}.$$
$\oDP{k}$ is equipped with a trivial filtration $0\subset\oDP{k}\subset\cdots$.
Consider the spectral sequences $E^*$ and $E'^*$ associated to these filtrations.
We will show that $\rho_{\oDR{k}}$ induces quism $(E^1,\dd^1)\xrightarrow{\sim} (E'^1,\dd'^1)$ and then we use a standard comparison theorem for spectral sequences to conclude the proof.

The differential $\dd^0$ on  $E^0$ is given by
$$\dd^0(x)=0=\dd^0(\phi),\quad \dd^0(\ul{x})=\phi\oo_1 x-(-1)^{k\dg{x}}\sum_{i=1}^n x\oo_i \phi$$
for any $x\in X(n)$.
For the moment, assume
\begin{gather} \label{EQHomologyOfE0}
H_*(E^0,\dd^0) \cong \frac{\Fr{X\op\Phi}}{(\phi\oo_1 x-(-1)^{k\dg{x}}\sum_{i} x\oo_i \phi)}
\end{gather}
as operads.
This is $E^1$ and the corresponding differential is $\dd^1(x)=\dd_{\oR}(x),\ \dd^1(\phi)=0$.
Next, observe that $E'^1$ is $\oDP{k}\cong\oP\oo\Fr{\Phi}$ as graded $\Sigma$ modules, and we transfer the differential and the operadic composition to $\oP\oo\Fr{\Phi}$.
Similarly, $E^1 \cong \Fr{X\op\Phi}/(\phi\oo_1 x\pm\sum_{i} x\oo_i \phi) \cong \Fr{X}\oo\Fr{\Phi}$.
Under these isos, $\rho_{\oDR{k}}^1:E^1\to E'^1$ becomes $\rho_{\oR}\oo\id:\Fr{X}\oo\Fr{\Phi}\to\oP\oo\Fr{\Phi}$, which is a quism by the K\"unneth formula for the $\oo$ product of $\Sigma$ modules (e.g. Proposition $6.2.5$ of \cite{LV}).

It remains to prove \eqref{EQHomologyOfE0}.
Let $g_1,\ldots,g_m\in X\sqcup \Phi\sqcup \ul{X}$ be composed along a tree in $\Fr{X\sqcup \Phi\sqcup \ul{X}}$.
Denote $v_i$ the vertex decorated by $g_i$.
We say that $g_i$ is in depth $d$ in the composition iff the shortest path from $v_i$ to the root vertex passes through exactly $d$ vertices (including $v_i$ and the root vertex) decorated by elements of $X\sqcup\ul{X}$.
Let $\oDR{k}^n$ be the $\Sigma$ module spanned by all tree compositions in $\oDR{k}$ whose every generator from $\ul{X}$ is in depth at most $n$.
Obviously
$$\Fr{X\op\Phi}=\oDR{k}^0\subset\oDR{k}^1\subset\cdots\to\colim_n \oDR{k}^n\cong\oDR{k},$$
where the colimit is taken in the category of $\Sigma$ modules.
For $n\geq 1$, let $\oQ^n$ be the $\Sigma$ module which is a quotient of $\Fr{X\op\Phi}$ by the sub $\Sigma$ module spanned by tree compositions of $g_1,\ldots,g_n$, where each $g_i$ is from $X\sqcup\Phi$ except for at least one $g_j$ which is in depth at most $n$ and is of the form $\phi\oo_1 x-(-1)^{k\dg{x}}\sum_{i}x\oo_i \phi$ for some $x\in X$.
There are obvious projections
$$\Fr{X\op\Phi}=\mathcal{Q}^0 \onto \mathcal{Q}^1 \onto \cdots\to\colim_n \mathcal{Q}^n\cong \frac{\Fr{X\op\Phi}}{(\phi\oo_1 x - (-1)^{k\dg{x}}\sum_{i}x\oo_i \phi)}.$$
Observe that
\begin{gather} \label{EQObsOnQs}
\oQ^{n+1} \cong X\oo\oQ^n
\end{gather}
as $\Sigma$ modules.
For the moment, assume
\begin{gather} \label{EQHomologyOfDRkn}
H_*(\oDR{k}^n) \cong \oQ^n
\end{gather}
as $\Sigma$ modules.
Then
$$H_*(\oDR{k},\dd^0)\cong H_*(\colim_n\oDR{k}^n)\cong\colim_n H_*(\oDR{k}^n) \cong \colim_n \mathcal{Q}^n \cong \frac{\Fr{X\op\Phi}}{(\phi\oo_1 x\pm\sum_{i}x\oo_i \phi)}$$
and the resulting iso $H_*(\oDR{k},\dd^0)\cong\Fr{X\op\Phi}/(\phi\oo_1 x\pm\sum_{i}x\oo_i \phi)$ is easily seen to be an iso of operads.

So it remains to check \eqref{EQHomologyOfDRkn}.
We proceed by induction: assume it holds for $n$ and we want to prove it for $n+1$.
Denote $\phi^l:=\phi\oo_1\cdots\oo_1\phi$, the $l$-fold operadic composition.
For $\ul{x}\in\ul{X}(m)$ and $x_1,\ldots,x_m\in\oDR{k}^n$, we have, in $\oDR{k}^{n+1}$, the following formula:
\begin{gather*}
(-1)^{kl}\dd^0\left(\phi^l\oo_1\ul{x}\oo(x_1,\ldots,x_m)\right) = \\
= \phi^{l+1}\oo_1 x\oo(x_1,\ldots,x_m) -(-1)^{k\dg{x}}\sum_{i=1}^m\phi^m\oo_1 x\oo_i\phi\oo(x_1,\ldots,x_l) +{} \\
+ (-1)^{\dg{x}+k-1}\sum_{i=1}^l(-1)^{\sum_{j=1}^{i-1}\dg{x_j}}\phi^l\oo_1\ul{x}\oo(x_1,\ldots,\dd^0(x_i),\ldots,x_m)
\end{gather*}
%Each $x_i\in\oDR{k}^n$ is a sum of tree compositions of generators from $X\sqcup\Phi\sqcup\ul{X}$.
%We write $\langle x_i\rangle\leq j$ iff each composition in the sum contains at most $j$ generators from $\ul{X}$.
We introduce an additional ``$\gr_2$'' grading
$$\gr_2(x):=0=:\gr_2(\phi),\quad\gr_2(\ul{x}):=1$$
on the generators from $\oDR{k}$ and extend it by requiring the operadic composition to be of $\gr_2$ degree $0$.
This induces a grading on $\oDR{k}^n$.
Let $\mathfrak{G}_p\subset\oDR{k}^{n+1}$ be spanned by $\phi^l\oo_1 g\oo(x_1,\ldots,x_m)$ satisfying $g\in(X\sqcup\ul{X})(m)$, $x_i\in\oDR{k}^n$ for $1\leq i\leq m$ and $\sum_{i=1}^m\gr_2(x_i)\leq p$.
Obviously $0=\mathfrak{G}_{-1}\subset\mathfrak{G}_0\subset\cdots$ is an exhaustive filtration of $\oDR{k}^{n+1}$.
$\oQ^{n+1}$ is equipped with a trivial filtration $0\subset\oQ^{n+1}\subset\cdots$.
Consider the spectral sequences $E^{0*}$ and $E'^{0*}$ associated to these filtrations.
We will prove that the projection $\pr:\oDR{k}^{n+1}\onto\oQ^{n+1}$ induces a quism $\pr^1:(E^{01},\dd^{01})\to(E'^{01},\dd'^{01})$ and then we use the comparison theorem to conclude the proof.

The differential $\dd^{00}$ on $E^{00}$ is
\begin{gather*}
(-1)^{kl}\dd^{00}( \phi^l\oo_1\ul{x}\oo(x_1,\ldots,x_m) ) = \\
\phi^{l+1}\oo_1x\oo(x_1,\ldots,x_m) - (-1)^{k\dg{x}}\sum_{i=1}^\ar{x}\phi^l\oo_1 x\oo_i\phi\oo(x_1,\ldots,x_m).
\end{gather*}
Observe that $\Ker\dd^{00}=\Fr{\Phi}\oo X\oo\oDR{k}^n$ and hence $H_*(E^{00},\dd^{00}) \cong X\oo\oDR{k}^n$ as graded $\Sigma$ modules and we transport the differential and operadic composition as usual.
Hence $E^{01}$ is $X\oo\oDR{k}^n$ and $\dd^{01}$ is a restriction of $\dd^{0}$.
Thus we get $H^*(E^{01},\dd^{01})\cong X\oo H^*(\oDR{k}^n,\dd^0)\cong X\oo\oQ^n \cong \oQ^{n+1}$ using the K\"unneth formula, induction hypothesis and \eqref{EQObsOnQs}.
Thus $\pr^1$ is a quism.
\end{proof}

\begin{remark}
The assumption that $X$ is nonpositively graded in Theorem \ref{THMResOfDer} can be modified.
We used it to get $\mathfrak{F}_{-1}=0$ in the above proof.
Boundedness of the filtration $\mathfrak{F}_*$ then guarantees convergence of the spectral sequence $E^*$.

For example, if $X(0)=0=X(1)$ and $X$ consists of unbounded chain complexes, the filtration $\mathfrak{F}_*$ is not bounded below.
However, an easy tree combinatorics shows that for every arity $n\geq 0$ the filtration $\mathfrak{F}_*(n)$ is bounded below.
This is sufficient for the proof to work as before.
\end{remark}

\begin{remark} \label{REMDistrib}
The operad $\oDP{k}$ can be described using a distributive law $\Fr{\Phi}\circ\oP\To{\Lambda}\oP\circ\Fr{\Phi}$ given by
$$\phi \oo_1 p \mapsto (-1)^{k\dg{p}}\sum_{i=1}^n p\oo_i\phi$$
for $p\in\oP(n)$.
We leave it to the reader to generalize the formula for the case of $\phi$ replaced by $\phi\oo\cdots\oo\phi$ on the LHS.
%$$\underbrace{\phi\oo_1\cdots\oo_1\phi}_l \oo p \mapsto (-1)^{k\dg{p}} \sum_{1\leq i_1,\ldots,i_l\leq n} p\oo_{i_1}\phi\oo_{i_2}\phi\cdots\oo_{i_l}\phi$$
It is easy to verify
$$\oDP{k} = \frac{\oP \fpr \Fr{\Phi}}{\left(\phi\oo_1 p-\Lambda(\phi\oo_1 p)\right)} \cong \oP\circ\Fr{\Phi}$$
as graded $\Sigma$ modules.

In case the operad $\oP$ is Koszul, the methods of Section $8.6$ of \cite{LV} immediately imply that $\oDP{k}$ is also Koszul.
This gives an alternative way to construct a resolution of $\oDP{k}$ in this special case, which we will discuss in Appendix.
Also, this has already been done in the non-$\Sigma$ setting and with homological grading for $\oP=\oAss$ in \cite{LD}.
The resulting resolution coincides with that of Theorem \ref{THMResOfDer} only up to a sign - there is a mistake in the formula for $\dd$ above Proposition $7.6$ in \cite{LD} as one checks that $\dd^2 Dm_3\neq 0$ in the notation of loc. cit.
\end{remark}

\begin{proposition} \label{PROPCofibrantDkR}
If $\oR$ is a quasi-free cofibrant dg operad, then so is $\oDR{k}$, for any $k\in\Z$.
\end{proposition}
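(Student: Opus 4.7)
The plan is to exhibit a cofibrant filtration on the generating $\Sigma$-module $Y := X\op\Phi\op\ul{X}$ of $\oDR{k}$ by lifting the one already present on $X$. Recall that a quasi-free operad $(\Fr{Y},\dd)$ is cofibrant precisely when $Y$ admits an exhaustive filtration $0 = Y_{-1}\subset Y_0\subset Y_1\subset\cdots$ by sub-$\Sigma$-modules with $\dd(Y_n)\subset\Fr{Y_{n-1}}$. So by hypothesis, $X$ carries such a filtration $0=X_{-1}\subset X_0\subset X_1\subset\cdots$ with $\dd_\oR(X_n)\subset\Fr{X_{n-1}}$, and the task is to build a compatible one on $Y$.

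The natural candidate is
$$Y_n := X_n\op\Phi\op\ul{X_{n-1}}\qquad (n\geq 0),$$
with the convention $X_{-1} = 0 = \ul{X_{-1}}$. This is clearly exhaustive since $X = \colim X_n$. To verify $\dd_{\oDR{k}}(Y_n)\subset\Fr{Y_{n-1}}$, I would treat each summand separately: on $X_n$, the differential agrees with $\dd_\oR$ and lands in $\Fr{X_{n-1}}\subset\Fr{Y_{n-1}}$; on $\Phi$ it vanishes; and on $\ul{X_{n-1}}$, for $x\in X_{n-1}$ the formula \eqref{DiffOnoDR} gives
$$\dd_{\oDR{k}}(\ul{x}) = \phi\oo_1 x - (-1)^{k\dg{x}}\sum_{i=1}^{\ar{x}} x\oo_i\phi - (-1)^k s(\dd_\oR(x)),$$
whose first two terms lie in $\Fr{X_{n-1}\op\Phi}\subset\Fr{Y_{n-1}}$. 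For the third, note that $s$ is a derivation sending $X$ into $\ul{X}$ (one vertex at a time), so $s\bigl(\Fr{X_{n-2}}\bigr)\subset\Fr{X_{n-2}\op\ul{X_{n-2}}}$; combined with $\dd_\oR(X_{n-1})\subset\Fr{X_{n-2}}$, this gives $s(\dd_\oR(x))\in\Fr{X_{n-2}\op\ul{X_{n-2}}}\subset\Fr{Y_{n-1}}$ as required.

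There is really no hard step here; the only point that needs care is the index bookkeeping, namely the choice to put $\ul{X_{n-1}}$ (rather than $\ul{X_n}$) at level $n$. This shift is exactly what is needed so that the ``correction term'' $s(\dd_\oR(x))$ in $\dd_{\oDR{k}}(\ul{x})$, which necessarily involves generators from $\ul{X}$, is still one filtration degree below $\ul{x}$. Once the filtration $\{Y_n\}$ is exhibited with the property above, cofibrancy of $\oDR{k}$ follows from the definition.
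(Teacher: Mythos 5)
Your proof is correct and takes essentially the same approach as the paper: there, the cofibrant grading on $X$ is extended by placing $\phi$ in degree $0$ and setting $\ul{X}_{[n]} := \upar^{k-1}(X_{[n-1]})$, which is exactly the shift by one that you implement with $Y_n = X_n\op\Phi\op\ul{X_{n-1}}$, justified by the same observation about the term $s(\dd_\oR(x))$. The only cosmetic difference is that the paper phrases cofibrancy via a direct-sum grading rather than an exhaustive filtration.
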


\begin{proof}
Recall from \cite{MHAAHA} that for the quasi free dg operad $\oR=(\Fr{X},\partial_{\oR})$, cofibrancy means that there is a grading $X=\bigoplus_{i\geq 0}X_{[i]}$ such that $\partial_{\oR}X_{[n]}\subset\Fr{\bigoplus_{i<n}X_{[i]}}$.
Extend this grading to $X\op\Phi\op\ul{X}$, the space of generators of $\oDR{k}$, by requiring $\phi$ to have degree $0$ and $\ul{X}_{[n]}:=\upar^{k-1}(X_{[n-1]})$, where the suspension acts on the cohomological grading.
With this extended grading we have cofibrancy of $\oDR{k}$.
\end{proof}

It is reasonable to define a $\oP'_\infty$ algebra on a dg vector space $A$ as an $\oR'$ algebra on $A$, where $\oR'$ is a quasi-free cofibrant resolution of $\oP'$.
The cofibrancy guarantees nice homotopy properties \cite{MHAAHA}, \cite{BM}.
The quasi-freeness makes computation easier.
Importantly, quasi-free cofibrant resolutions always exist, e.g. the bar-cobar resolution of Theorem $6.6.5$ of \cite{LV}.
Similar definitions of strong homotopy algebra appear in e.g. \cite{MHAVRO} and \cite{LV}.

Stated this way, the notion of $\oP'_\infty$ algebra is ambiguous, since the quasi-free resolution is not unique.
Thus when invoking a $\oP'_\infty$ algebra, we always assume a particular choice of the quasi-free resolution $\oR'$ has been made.
This is the case in practice: e.g. $A_\infty$ algebras refer to the Koszul resolution of the operad $\oAss$.

As an aside, recall that this ambiguity can be essentially removed (in many cases) by requiring the resolution to be minimal, since then it is unique up to an isomorphism by Proposition $3.7$ of \cite{DCV}.
The Koszul resolutions are examples of minimal models.

Now we apply the above for $\oP':=\oDP{k}$ and $\oR':=\oDR{k}$.
Denote by $\N_0$ the set of all natural numbers and $0$.

\begin{definition}
Let $\beta:\oDR{k}\to\oEnd{A}$ be a $\oDR{k}$ algebra.
There is an injection $\oR=\Fr{X}\into\Fr{X}\fpr\Fr{\Phi\op\ul{X}}\cong\Fr{X\op\Phi\op\ul{X}}=\oDR{k}$.
Using it, $\beta|_{\oR}$ can be seen as a $\oP_\infty$ algebra, which is determined by the collection
$$\set{\beta(x):A^{\ot n}\to A}{n\in\N_0,\ x\in X'(n)}$$
of operations, where $X'(n)$ is any subset of $X(n)$ generating $X(n)$ as a $\fld\Sigma_n$-module.
Then the collection
$$\{\beta(\phi)\}\cup\set{\beta(\ul{x}):A^{\ot n}\to A}{n\in\N_0,\ x\in X'(n)}$$
is called a \emph{degree $k$ strong homotopy derivation} of the $\oP_\infty$ algebra.
\end{definition}

Recall that $\Fr{X}$ carries the \emph{weight grading}
$$\Fr{X} = \bigoplus_{n\geq 0}\Fr[n]{X},$$
where $\Fr[n]{X}$ consists of (linear combinations of) elements corresponding to trees with exactly $n$ vertices.

\begin{proposition} \label{PROSelfExample}
If the differential $\dd_{\oR}$ of the quasi-free resolution $\oR$ of $\oP$ is quadratic, i.e. $\dd_{\oR}(X)\subset\Fr[2]{X}$, then any $\oP_\infty$ algebra $\beta:\oR\to\oEnd{A}$ has a degree $1$ strong homotopy derivation $\beta:\oDR{1}\to\oEnd{A}$ defined by
$$\beta(\phi):=-d, \quad \beta(\ul{x}):=\beta(x),\ x\in X.$$
In other words, the differential $d$ on $A$ and the structure operations of the $\oP_\infty$ algebra constitute a strong homotopy derivation of the $\oP_\infty$ algebra.
\end{proposition}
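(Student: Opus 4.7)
The plan is to construct $\beta$ on $\oDR{1}$ as a morphism of graded operads by specifying values on the generators of $\oDR{1} = \Fr{X\op\Phi\op\ul{X}}$, then verify the chain-map property on each of the three classes of generators. Since $\oDR{1}$ is free as a graded operad, existence of the underlying morphism of graded operads is automatic, and only compatibility with the differentials requires work.

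For $x \in X$ the identity $\beta(\dd_{\oDR{1}}(x)) = \beta(\dd_\oR(x)) = \dd_{\oEnd{A}}(\beta(x))$ is simply the assumption that $\beta|_\oR$ is a dg operad morphism. For $\phi$, both sides vanish: $\dd_{\oDR{1}}(\phi) = 0$ by definition, and $\dd_{\oEnd{A}}(-d) = 0$ by $d^2 = 0$. Note that here it is crucial that $k=1$, since $\beta(\phi) = -d$ must have the same degree as $\phi$.

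The main case, and where the two hypotheses enter, is $\ul{x}$. For $k=1$ the formula specializes to
\begin{equation*}
\dd_{\oDR{1}}(\ul{x}) = \phi\oo_1 x - (-1)^{\dg{x}}\sum_i x\oo_i\phi + s(\dd_\oR(x)).
\end{equation*}
Applying $\beta$ with $\beta(\phi) = -d$, the first two terms evaluate to $-d\oo_1\beta(x) + (-1)^{\dg{x}}\sum_i \beta(x)\oo_i d = -\dd_{\oEnd{A}}(\beta(x)) = -\beta(\dd_\oR(x))$. For the third term, I would invoke quadraticness of $\dd_\oR$: each summand of $\dd_\oR(x) \in \Fr[2]{X}$ has the form $y\oo_j z$ with $y,z \in X$, and since $s$ is a derivation of degree $k-1 = 0$, no Koszul sign appears and $s(y\oo_j z) = \ul{y}\oo_j z + y\oo_j \ul{z}$. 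The prescription $\beta(\ul{w}) = \beta(w)$ makes the two summands coincide under $\beta$, yielding $\beta(s(\dd_\oR(x))) = 2\beta(\dd_\oR(x))$. The three contributions thus sum to $\beta(\dd_\oR(x)) = \dd_{\oEnd{A}}(\beta(\ul{x}))$, as required.

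The only place where I anticipate needing real care is the sign bookkeeping in the last step: confirming that the coefficient $-(-1)^k$ in front of $s(\dd_\oR(x))$, which equals $+1$ at $k=1$, combines with the factor of $2$ coming from the derivation rule for $s$ to exactly cancel the $-\beta(\dd_\oR(x))$ contributed by the $\phi$-terms. Both hypotheses are essential for this cancellation: $k=1$ fixes the degree of $\beta(\phi)$ and eliminates Koszul signs in $s$, while quadraticness of $\dd_\oR$ ensures that $s$ produces exactly two summands per piece (rather than $n$ summands on a weight-$n$ target, which would break the $-1 + 2 = 1$ balance).
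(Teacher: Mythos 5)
Your proposal is correct and follows essentially the same route as the paper's proof: reduce to checking compatibility with differentials on generators, observe that the $\phi$-terms in $\dd_{\oDR{1}}(\ul{x})$ contribute $-\dd_{\oEnd{A}}\beta(x)=-\beta\dd_\oR(x)$, and use quadraticity plus $\dg{s}=0$ to get $\beta s\dd_\oR(x)=2\beta\dd_\oR(x)$, so the total is $\beta\dd_\oR(x)=\dd_{\oEnd{A}}\beta(\ul{x})$. The only cosmetic omission is that summands of $\dd_\oR(x)$ in the symmetric setting have the form $\sigma(y\oo_j z)$ with a permutation $\sigma$, which changes nothing since $s$ and $\beta$ are equivariant.
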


%\todo[inline]{A better convention for $\dd_{\oDR{k}}$ is probably $$\dd_{\oDR{k}}(\ul{x})=(-1)^k\phi \oo_1 x -(-1)^{k(\dg{x}+1)} \sum_{i=0}^n x\oo_i \phi \ -(-1)^k s(\dd_\oR x).$$ Does it work? It would remove the awkward signs at $-d$ here and also at $\theta_1:=(-1)^k\beta(\phi)$ later.}

\begin{proof}
It suffices to verify that $\beta$ commutes with differentials.
\begin{gather*}
\beta\dd_{\oDR{k}}(\ul{x}) = -d \oo_1 \beta(x) +(-1)^{\dg{x}} \sum_{i=0}^n \beta(x)\oo_i d \ + \beta s(\dd_\oR x).
\end{gather*}
The quadraticity assumption implies that
$$\dd_{\oR}(x) = \sum_j \sigma_j (x'_j\oo_{i_j}x''_j)$$
for some $x'_j, x''_j\in X$ and $i_j\in\N$ and some permutations $\sigma_j$.
Thus
\begin{align*}
\beta s\dd_\oR(x) &= \beta\bigg( \sum_j \sigma_j (\ul{x'_j}\oo_{i_j}x''_j) + \sum_j \sigma_j (x'_j\oo_{i_j}\ul{x''_j}) \bigg) = 2\beta\dd_\oR(x) = 2\dd_{\oEnd{A}}\beta(x) = \\
&= 2( d \oo_1 \beta(x) -(-1)^{\dg{\beta(x)}} \sum_{i=0}^n \beta(x)\oo_i d )
\end{align*}
since $\dg{s}=0$.
$\beta\dd_{\oDR{k}}(\ul{x})=\dd_{\oEnd{A}}\beta(\ul{x})$ follows.
\end{proof}

\begin{example} \label{EXTautologicalHomDer}
The quadraticity assumption of Proposition \ref{PROSelfExample} on $\oR$ is satisfied by the Koszul resolution of any Koszul operad $\oP$.
In particular, we get an example of degree $1$ strong homotopy derivation of an arbitrary $A_\infty$ or $L_\infty$ algebra.
\end{example}

%\todo[inline]{The assumptions of Proposition \ref{PROSelfExample} in fact imply that $\oP$ is Koszul if $\oP$ is concentrated in degree $0$ and is weight graded (this follows from Theorem $39$ of [Merkulov, Vallette I]) (Is the weight grading automatic since $\dd_{\oR}$ quadratic implies $\oP$ quadratic?).}

\begin{example}
%Let $\rho_{\oR}:\oR\to\oP$ be a quadratic quasi-free resolution such that $\rho_{\oR}$ is a fibration, i.e. aritywise surjective.
Let $\oR\to\oP$ be a quadratic quasi-free resolution which is a fibration, i.e. aritywise surjection.
%Let $\rho_{\oR'}:\oR'\to\oP$ be an arbitrary quasi-free cofibrant resolution.
Let $\oR'\to\oP$ be an arbitrary quasi-free cofibrant resolution.
Then Proposition \ref{PROPCofibrantDkR} and the lifting property of cofibrations imply that there exists an operad morphism $\oDR{k}'\to\oDR{k}$.
%Then Proposition \ref{PROPCofibrantDkR} implies there is an operad morphism $\alpha$ such that
 %$$\begin{tikzpicture}
%\matrix (m) [matrix of math nodes, row sep=3em, column sep=2.5em, text height=1.5ex, text depth=0.25ex]
%{  & \oDR{k} \\
%\oDR{k}' & \oDP{k} \\ };
%\path[->] (m-2-1) edge node[above]{$\sim$} node[below]{$\rho_{\oDR{k}'}$} (m-2-2);
%\path[->>] (m-1-2) edge node[left]{$\sim$} node[right]{$\rho_{\oDR{k}}$} (m-2-2);
%\path[->] (m-2-1) edge node[above left]{$\alpha$} (m-1-2);
%\end{tikzpicture}$$
%commutes.
Thus the degree $1$ strong homotopy derivation of an $\oR$ algebra on $A$ constructed in Proposition \ref{PROSelfExample} induces a degree $1$ strong homotopy derivation of an $\oR'$ algebra on $A$.

In particular, for $\oP$ Koszul, taking $\oR$ to be its Koszul resolution, this gives an example of degree $1$ strong homotopy derivation of an algebra over the bar-cobar construction of $\oP$.
\end{example}

\begin{remark}
If $\oP$ of Proposition \ref{PROSelfExample} is concentrated in degree $0$, then it is already Koszul by Theorem $39$ of \cite{MerkVall}.
\end{remark}

\begin{remark} \label{REMKoszulDiscuss}
Obviously, $\oDR{k}$ is minimal iff $\oR$ is minimal.
Let $\oP$ be a Koszul operad and let $\oR$ be its Koszul resolution.
Since Koszul resolutions are always minimal, $\oDR{k}$ is minimal.
In Remark \ref{REMDistrib}, we have observed that in this case $\oDP{k}$ is Koszul.
The Koszul resolution of $\oDP{k}$ is minimal and, by the uniqueness of minimal resolutions, it must be isomorphic to $\oDR{k}$.
\end{remark}

%%%%%%%%%%%
%%%%%%%%%%%
%%%%%%%%%%%

\section{\texorpdfstring{Strong homotopy derivations of $A_\infty$ algebras}{Strong homotopy derivations of A-infinity algebras}} \label{SECAoo}

%%%%%%%%%%%

\subsection{Application of the theory}

We make degree $k$ strong homotopy derivations of strong homotopy associative algebras explicit.
The calculation is mostly taken from \cite{GS}:

Let $\oP$ be the operad for associative algebras, that is 
$$\oP:=\oAss=\Fr{\fld\Sigma_2\{\mu\}}/(\mu\oo_1\mu-\mu\oo_2\mu).$$
Its minimal resolution is (see e.g. \cite{MHAVRO} with a $(-1)^{n+1}$ sign shift) $\oR:=(\Fr{X},\dd_\oR)\To{\rho_\oR}(\oAss,0)$, where $X$
is the $\Sigma$ module whose arity $n$ component is the $n!$-dimensional space $\fld\Sigma_n\{x^n\}$, where $\dg{x^n}=2-n$ and $\dd_\oR$ is a derivation differential given by
$$\dd_\oR(x^n):=\sum_{i+j=n+1}\sum_{l=1}^i (-1)^{i+(l+1)(j+1)}x^i\oo_l x^j$$
and the quism $\rho_\oR:\oR\to\oP$ is given by
$$\rho_\oR(x^2):=\mu, \quad \rho_\oR(x^n):=0\mbox{ for }n\geq 3.$$
Recall that $\oR$ algebras on a dg vector space are $A_\infty$ algebras.
The associated \emph{operad with derivation} is
$$\oDP{k}:=\frac{\oAss\fpr \Fr{\Phi}}{(\phi\oo\mu-\mu\oo_1\phi-\mu\oo_2\phi)},$$
where $\Phi:=\fld\{\phi\}$ with $\phi$ a degree $k$ element of arity $1$.
Its quasi-free cofibrant resolution is $$\oDR:=(\Fr{X\op\Phi\op\ul{X}},\dd_{\oDR{k}})\To{\rho_{\oDR{k}}}(\oDP{k},0),$$
where the differential $\dd_{\oDR{k}}$ is given by
\begin{align*}
&\dd_{\oDR{k}}(x^n) := \sum_{i+j=n+1}\sum_{l=1}^i (-1)^{i+(l+1)(j+1)}x^i\oo_l x^j,\\
&\dd_{\oDR{k}}(\phi) := 0, \\
&\dd_{\oDR{k}}(\ul{x}^n) := \phi\oo_1 x^n - (-1)^{nk}\sum_{l=1}^n x^n\oo_l\phi + {} \\
&\phantom{\dd_{\oDR{k}}(\ul{x}^n) := } - \sum_{i+j=n+1}\sum_{l=1}^i (-1)^{k+i+(l+1)(j+1)} (\ul{x}^i\oo_l x^j + (-1)^{(k+1)i} x^i\oo_l\ul{x}^j)
\end{align*}
and the quism $\rho_{\oDR{k}}$ by
$$\rho_{\oDR{k}}(x^n):=\rho_\oR(x^n),\quad \rho_{\oDR{k}}(\phi):=\phi,\quad \rho_{\oDR{k}}(\ul{x}^n)=0.$$

Thus a $\oDR{k}$ algebra $\beta:\oDR{k}\to\oEnd{A}$ on a dg vector space $(A,d)$ consists of two collections $\set{m_n}{n\geq 2}$ and $\set{\theta_n}{n\geq 1}$ of operations
\begin{gather*}
m_n:A^{\ot n}\to A,\quad \dg{m_n}=2-n, \\
\theta_n:A^{\ot n}\to A,\quad \dg{\theta_n}=k-n+1
\end{gather*}
given by $m_n:=\beta(x^n)$, $\theta_1:=(-1)^k\beta(\phi)$ and $\theta_n:=\beta(\ul{x}^n)$ for $n\geq 2$,
satisfying
\begin{flalign}
&d \oo_1 m_n-(-1)^{n}\sum_{l=1}^n m_n\oo_l d = \sum_{i+j=n+1}\sum_{l=1}^i (-1)^{i+(l+1)(j+1)}m_i\oo_l m_j, \quad n\geq 2, \label{EQAinfty} \\
&\begin{multlined}
d\oo_1\theta_n-(-1)^{k-n+1}\sum_{l=1}^n \theta_n\oo_l d = \\
= \sum_{i+j=n+1}\sum_{l=1}^i (-1)^{k+1+i+(l+1)(j+1)} (\theta_i\oo_l m_j + (-1)^{(k+1)i} m_i\oo_l\theta_j), \quad n\geq 1. \nonumber
\end{multlined}
\end{flalign}
The indices $i,j,l$ are restricted to values where both sides have sense.

The above properties of $m_n$'s and $\theta_n$'s are expressed by ``$\{\theta_n\}$ is a strong homotopy derivation of the $A_\infty$ algebra $(A,\{m_n\})$''.
Under the notation
$$
\begin{tikzpicture}[baseline=-\the\dimexpr\fontdimen22\textfont2\relax] %,scale=0.5
\draw (0,.5)--(0,-.5);
\draw (0,0)--(-.5,-.5);
\draw (0,0)--(-.25,-.5);
\draw (0,0)--(.25,-.5);
\draw (0,0)--(.5,-.5);
\filldraw (0,0) circle (3pt);
\node at (0,0) [right]{$\scriptstyle{n}$};
\end{tikzpicture} 
:= m_n, \qquad
\begin{tikzpicture}[baseline=-\the\dimexpr\fontdimen22\textfont2\relax] %,scale=0.5
\draw (0,.5)--(0,-.5);
\draw (0,0)--(-.5,-.5);
\draw (0,0)--(-.25,-.5);
\draw (0,0)--(.25,-.5);
\draw (0,0)--(.5,-.5);
\filldraw[fill=white,draw=black] (0,0) circle (3pt);
\node at (0,0) [right]{$\scriptstyle{n}$};
\end{tikzpicture}
:= \theta_n,
$$
the usual mnemonic for such formulas is
\begin{gather*}
\dd_{\oEnd{A}} 
\begin{tikzpicture}[baseline=-\the\dimexpr\fontdimen22\textfont2\relax] %,scale=0.5
\draw (0,.5)--(0,-.5);
\draw (0,0)--(-.5,-.5);
\draw (0,0)--(-.25,-.5);
\draw (0,0)--(.25,-.5);
\draw (0,0)--(.5,-.5);
\filldraw (0,0) circle (3pt);
\node at (0,0) [right]{$\scriptstyle{n}$};
\end{tikzpicture} 
= 
\sum_{i+j=n+1}\sum_{l=1}^i (-1)^{i+(l+1)(j+1)} 
\begin{tikzpicture}[baseline=-\the\dimexpr\fontdimen22\textfont2\relax] %,scale=0.5
\draw (0,1)--(0,-1);
\draw (0,.5)--(-.5,0);
\draw (0,.5)--(-.25,0);
\draw (0,.5)--(.25,0);
\draw (0,.5)--(.5,0);
\draw (0,-.5)--(-.5,-1);
\draw (0,-.5)--(-.25,-1);
\draw (0,-.5)--(.25,-1);
\draw (0,-.5)--(.5,-1);
\filldraw (0,.5) circle (3pt);
\node at (0,.5) [right]{$\scriptstyle{i}$};
\filldraw (0,-.5) circle (3pt);
\node at (0,-.5) [right]{$\scriptstyle{j}$};
\node at (0,0) [below left]{$\scriptstyle{l}$};
\end{tikzpicture} \ \ , \\
\dd_{\oEnd{A}} 
\begin{tikzpicture}[baseline=-\the\dimexpr\fontdimen22\textfont2\relax] %,scale=0.5
\draw (0,.5)--(0,-.5);
\draw (0,0)--(-.5,-.5);
\draw (0,0)--(-.25,-.5);
\draw (0,0)--(.25,-.5);
\draw (0,0)--(.5,-.5);
\filldraw[fill=white,draw=black] (0,0) circle (3pt);
\node at (0,0) [right]{$\scriptstyle{n}$};
\end{tikzpicture}
=
\sum_{i+j=n+1}\sum_{l=1}^i (-1)^{k+1+i+(l+1)(j+1)} \left(
\begin{tikzpicture}[baseline=-\the\dimexpr\fontdimen22\textfont2\relax] %,scale=0.5
\draw (0,1)--(0,-1);
\draw (0,.5)--(-.5,0);
\draw (0,.5)--(-.25,0);
\draw (0,.5)--(.25,0);
\draw (0,.5)--(.5,0);
\draw (0,-.5)--(-.5,-1);
\draw (0,-.5)--(-.25,-1);
\draw (0,-.5)--(.25,-1);
\draw (0,-.5)--(.5,-1);
\filldraw[fill=white,draw=black] (0,.5) circle (3pt);
\node at (0,.5) [right]{$\scriptstyle{i}$};
\filldraw (0,-.5) circle (3pt);
\node at (0,-.5) [right]{$\scriptstyle{j}$};
\node at (0,0) [below left]{$\scriptstyle{l}$};
\end{tikzpicture}
 + (-1)^{(k+1)i} 
\begin{tikzpicture}[baseline=-\the\dimexpr\fontdimen22\textfont2\relax] %,scale=0.5
\draw (0,1)--(0,-1);
\draw (0,.5)--(-.5,0);
\draw (0,.5)--(-.25,0);
\draw (0,.5)--(.25,0);
\draw (0,.5)--(.5,0);
\draw (0,-.5)--(-.5,-1);
\draw (0,-.5)--(-.25,-1);
\draw (0,-.5)--(.25,-1);
\draw (0,-.5)--(.5,-1);
\filldraw (0,.5) circle (3pt);
\node at (0,.5) [right]{$\scriptstyle{i}$};
\filldraw[fill=white,draw=black] (0,-.5) circle (3pt);
\node at (0,-.5) [right]{$\scriptstyle{j}$};
\node at (0,0) [below left]{$\scriptstyle{l}$};
\end{tikzpicture}
\right).
\end{gather*}

%%%%%%%%%%%

\subsection{Suspension} \label{SECSusp}

The above formulas can be simplified by defining $V:=\downarrow\!\! A$.
A map $f:A^{\ot n}\to A$ of degree $d$ is equivalent to a map $f':V^{\ot n}\to V$ of degree $d+n-1$ by the formula
$$f'= \ \downarrow\!\!f\uparrow^{\ot n}.$$
The passage to the suspension $V$ alters signs; for example, we have the following useful formula:
\begin{gather} \label{EQSuspension}
(f_i\oo_l g_j)' = \downar (f_i\oo_l g_j)\upar^{\ot i+j-1} = (-1)^{|g_j|(i+1)+ij+jl+i+l}f'_i\oo_l g'_j,
\end{gather}
where $f_i:A^{\ot i}\to A$ and $g_j:A^{\ot j}\to A$.
In the sequel, we will omit the prime and thus we write $f$ for both $f$ and $f'$.

\begin{proposition} \label{shder}
An $A_\infty$ algebra  structure is equivalently given by a collection of degree one linear maps
$m_n:V^{\otimes n}\rightarrow V, n\geq 1$ that satisfy the relations
$$\sum_{k+l=n+1}\sum_{i=1}^k(-1)^{\alpha} m_k(v_1,\dots,v_{i-1},m_l(v_i,\dots,v_{i+l-1}),v_{i+l},\dots,v_n)=0$$
for $n\geq 1$ and $\alpha$ is the sum of the degrees of the elements $v_1,\dots,v_{i-1}$.

A strong homotopy derivation of degree $k$ of an $A_\infty$ algebra $(V,\{m_n\})$ is equivalently given by a collection of degree $k$ linear maps $\theta_q:V^{\otimes q}\rightarrow V,q\geq 1,$ that satisfy the relations
\begin{gather*}
0=\sum_{r+s=q+1}\sum_{i=0}^{r-1}(-1)^{\beta}\theta_r(v_1,\dots, v_i,m_s(v_{i+1},\dots,v_{i+s}),\dots,v_q) + {} \label{EQAooHomDer} \\
-(-1)^k(-1)^{\gamma}m_r(v_1,\dots,v_i,\theta_s(v_{i+1},\dots,v_{i+s}),\dots,v_q)
\end{gather*}
for $q\geq 1$.
The exponent $\beta$ results from moving the degree one maps $m_s$  past $(v_1,\dots,v_i)$ and  $\gamma$ from moving the degree $k$ maps $\theta_s$ past  $(v_1,\dots,v_i)$.
\end{proposition}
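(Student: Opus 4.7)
The plan is to reduce both statements to a single sign bookkeeping argument based on the suspension identity \eqref{EQSuspension}. Under $A \mapsto V = \downarrow\! A$, every structure map $m_n$ (of degree $2-n$ on $A$) and the differential $d$ (of degree $1$) both become degree-$1$ maps on $V$; similarly $\theta_n$ of degree $k-n+1$ on $A$ becomes degree $k$ on $V$. This allows unifying $d$ with the $m_n$'s on $V$ by setting $m_1 := d$, and (up to a global sign) $\beta(\phi)$ with the $\theta_n$'s by setting $\theta_1 := (-1)^k \beta(\phi)$, which is exactly the convention already used in the excerpt above \eqref{EQAinfty}.

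For the $A_\infty$ half I would apply \eqref{EQSuspension} to each term of \eqref{EQAinfty} and reduce the exponents mod $2$. A direct calculation gives: $(d\oo_1 m_n)'$ has sign $+1$; $(m_n\oo_l d)'$ acquires $(-1)^{n+1}$, which combines with $-(-1)^n$ to give $+1$; and $(m_i\oo_l m_j)'$ acquires $(-1)^{i+j+l(j+1)}$, which combines with $(-1)^{i+(l+1)(j+1)}$ to $(-1)^{2j+1} = -1$. Consequently \eqref{EQAinfty} on $V$ becomes
$$m_1\oo_1 m_n + \sum_l m_n\oo_l m_1 + \sum_{\substack{i+j=n+1\\ i,j\geq 2}}\sum_l m_i\oo_l m_j = 0,$$
that is, the single homogeneous relation $\sum_{i+j=n+1,\ i,j\geq 1}\sum_l m_i \oo_l m_j = 0$. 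Evaluating the operadic composition on elements contributes the Koszul sign $(-1)^{|v_1|+\cdots+|v_{i}|}$ because $|m_s|=1$, which is the sign $\alpha$ in the proposition.

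The same mechanism handles the derivation equation. Applying \eqref{EQSuspension} termwise and reducing exponents mod $2$, the coefficient of $\theta_i'\oo_l m_j'$ collapses to $(-1)^k$ and the coefficient of $m_i'\oo_l\theta_j'$ to $-1$; the LHS reassembles as $m_1\oo_1\theta_n - (-1)^k\sum_l\theta_n\oo_l m_1$ once the suspension signs are absorbed into $\theta_1 := (-1)^k\beta(\phi)$. Collecting everything yields, on $V$,
$$\sum_{\substack{i+j=n+1\\ i,j\geq 1}}\sum_l \theta_i\oo_l m_j \;=\; (-1)^k \sum_{\substack{i+j=n+1\\ i,j\geq 1}}\sum_l m_i\oo_l \theta_j.$$
Element-wise evaluation produces the two distinct Koszul signs stated: $(-1)^\beta$ with $\beta=|v_1|+\cdots+|v_i|$ for inserting the degree-$1$ map $m_s$, and $(-1)^\gamma$ with $\gamma=k(|v_1|+\cdots+|v_i|)$ for inserting the degree-$k$ map $\theta_s$.

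The main obstacle is the mod-$2$ arithmetic: each term carries three independent signs (the intrinsic coefficient in the original operadic equation, the suspension sign from \eqref{EQSuspension}, and the Koszul sign from evaluation), and one must verify that they cancel to the claimed uniform form, but no further ideas are needed. Useful sanity checks are the cases $n=1$: on the $A_\infty$ side one recovers $m_1\oo_1 m_1 = d^2=0$, while on the derivation side one recovers $\theta_1\oo_1 m_1 = (-1)^k m_1\oo_1\theta_1$, the $(\pm)$-commutation of $\theta_1$ with $d$ already built into Definition \ref{DEFDerivation}.
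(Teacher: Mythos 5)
Your proposal is correct and follows essentially the same route as the paper's own proof: set $m_1:=d$ (and absorb $\beta(\phi)$ into $\theta_1$ with the sign $(-1)^k$), conjugate \eqref{EQAinfty} and its derivation counterpart by $\downarrow(\,\cdot\,)\uparrow^{\ot n}$ using \eqref{EQSuspension}, check that the signs collapse to the uniform coefficients $+1$, $(-1)^k$ and $-1$, and then evaluate on elements to produce the Koszul signs $\alpha$, $\beta$, $\gamma$. Your intermediate sign computations (e.g.\ $(-1)^{n+1}$ for $(m_n\oo_l d)'$ and the cancellation $(-1)^{i+(l+1)(j+1)}\cdot(-1)^{i+j+l(j+1)}=-1$) agree with the suspended identities displayed in the paper's proof, so nothing is missing.
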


The above description of $A_\infty$ algebras is often used as a definition, e.g. \cite{KS} (set $m_0=0$ in their definition).
It is equivalent to the usual definition, e.g. \cite{S}, where the operations have degree $2-n$.
Also, the same notion of homotopy derivation appeared, for $k=1$, in \cite{KS}.
The unsuspended version appeared, for $k=0$, in \cite{LD}.

\begin{proof}
We first define $m_1:=d$.
Then we precompose \eqref{EQAinfty} with $\uparrow^{\ot n}$, compose with $\downarrow$ and use \eqref{EQSuspension}:
\begin{gather*}
m_1\oo_1 m_n+\sum_{l=1}^n m_n\oo_l m_1 = -\sum_{i+j=n+1}\sum_{l=1}^i m_i\oo_l m_j, \quad n\geq 2, \\
m_1\oo_1\theta_n-(-1)^{k}\sum_{l=1}^n \theta_n\oo_l m_1 = \sum_{i+j=n+1}\sum_{l=1}^i ((-1)^k\theta_i\oo_l m_j - m_i\oo_l\theta_j), \quad n\geq 1.
\end{gather*}
An evaluation on $v_1,\ldots,v_n\in V$ yields the required relations.
\end{proof}

It is well known \cite{S} that the structure maps $m_n$'s may be extended to a degree $+1$ coderivation $\boldsymbol{m}$ on the tensor coalgebra $T^c(V)$ of $V$, and that the relations are equivalent to the equation $\boldsymbol{m}^2=0$.
Similarly:

\begin{proposition} \label{PROPAinftyHomDerAsCoder}
A degree $k$ strong homotopy derivation $\{\theta_q\}$ of an $A_\infty$ algebra $(V,\{m_n\})$ is equivalently described by a degree $k$ coderivation $\boldsymbol{\theta}:T^c(V)\to T^c(V)$ satisfying
$$[\boldsymbol{m},\boldsymbol{\theta}]=0.$$
\end{proposition}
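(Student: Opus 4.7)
The plan is to invoke the standard bijection between coderivations of the tensor coalgebra $T^c(V)$ and sequences of ``corestriction'' maps to $V$. Explicitly, any coderivation $\boldsymbol{D}$ on $T^c(V)$ of fixed degree $d$ is uniquely determined by its projection $\pi\circ \boldsymbol{D}$, where $\pi:T^c(V)\onto V$ is canonical, and any collection $\{D_q:V^{\ot q}\to V\}$ of degree $d$ maps extends uniquely to a coderivation via
\begin{equation*}
\boldsymbol{D}(v_1\ot\cdots\ot v_n) = \sum_{q,i} (-1)^{d(\dg{v_1}+\cdots+\dg{v_i})}\, v_1\ot\cdots\ot v_i\ot D_q(v_{i+1},\ldots,v_{i+q})\ot\cdots\ot v_n.
\end{equation*}
Under this correspondence I would associate $\boldsymbol{m}$ to $\{m_n\}$ (the well known $A_\infty$ coderivation) and $\boldsymbol{\theta}$ to $\{\theta_q\}$; conversely, any degree $k$ coderivation $\boldsymbol{\theta}$ on $T^c(V)$ gives back the collection $\theta_q:=\pi\circ\boldsymbol{\theta}|_{V^{\ot q}}$.

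Since the graded commutator of coderivations is again a coderivation, $[\boldsymbol{m},\boldsymbol{\theta}]$ is a coderivation of degree $k+1$, so $[\boldsymbol{m},\boldsymbol{\theta}]=0$ iff $\pi\circ[\boldsymbol{m},\boldsymbol{\theta}]|_{V^{\ot q}}=0$ for every $q$. Unwinding $\pi\circ\boldsymbol{m}\circ\boldsymbol{\theta}|_{V^{\ot q}}$ using the extension formula above, the only terms surviving the final projection to $V$ are those in which the outer $\boldsymbol{m}$ consumes all remaining tensor factors produced by $\boldsymbol{\theta}$; this forces $r+s=q+1$ and yields precisely
\begin{equation*}
\sum_{r+s=q+1}\sum_{i=0}^{r-1} (-1)^{\beta}\, m_r(v_1,\ldots,v_i,\theta_s(v_{i+1},\ldots,v_{i+s}),\ldots,v_q).
\end{equation*}
Symmetrically, $\pi\circ\boldsymbol{\theta}\circ\boldsymbol{m}|_{V^{\ot q}}$ produces the analogous sum with $\theta_r$ outside and $m_s$ inside. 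Combining these via $[\boldsymbol{m},\boldsymbol{\theta}]=\boldsymbol{m}\boldsymbol{\theta}-(-1)^k\boldsymbol{\theta}\boldsymbol{m}$ reproduces exactly the LHS of the homotopy derivation relation from Proposition \ref{shder}, so its vanishing is equivalent to $\{\theta_q\}$ being a strong homotopy derivation.

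The main obstacle is sign bookkeeping. The Koszul signs $(-1)^{d(\dg{v_1}+\cdots+\dg{v_i})}$ from the coderivation extension formula must reproduce the exponents $\beta$ (arising from the degree one $\boldsymbol{m}$ passing over $v_1,\ldots,v_i$) and $\gamma$ (arising from the degree $k$ map $\boldsymbol{\theta}$ doing the same), and these must combine cleanly with the $(-1)^k$ from the graded commutator. Once the extension formula is written down with its signs made explicit and one carefully tracks the additional Koszul sign incurred when the outer operation is evaluated on a block containing the output of the inner operation, the verification that all signs match Proposition \ref{shder} is a routine application of the Koszul sign rule.
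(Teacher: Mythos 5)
Your proposal is correct and follows essentially the same route as the paper: define $\boldsymbol{\theta}$ via the bijection between coderivations of $T^c(V)$ and their corestrictions $\pi_1\boldsymbol{\theta}|_{V^{\otimes q}}=\theta_q$, observe that $[\boldsymbol{m},\boldsymbol{\theta}]$ is again a coderivation and hence vanishes iff its projections to $V$ do, and check that those projections are exactly the relations of Proposition \ref{shder}. The paper's proof is just a terser version of the same argument, likewise leaving the sign bookkeeping to the reader.
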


\begin{proof}
$\boldsymbol{\theta}$ is defined by the projections $\pi_1\boldsymbol{\theta}|_{V^{\otimes q}}=\theta_q$, where $\pi_1:T^c(V)\to V$ is the obvious projection.
$0=[\boldsymbol{m},\boldsymbol{\theta}]=\boldsymbol{m}\circ \boldsymbol{\theta}-(-1)^k\boldsymbol{\theta}\circ\boldsymbol{m}$ is then easily seen to be equivalent to \eqref{EQAooHomDer}.
\end{proof}

\begin{example} \label{EXADoubleBracket}
Let $(V,m)$ be an $A_\infty$ algebra, i.e. $[\bm,\bm]=0$.
Let $\btheta$ be an arbitrary coderivation on $T^c(V)$.
The Jacobi identity then implies that 
\begin{gather*}
[\bm,[\bm,\btheta]] = -(-1)^{1+\dg{\btheta}} [\bm,[\btheta,\bm]] - [\btheta,[\bm,\bm]] = - [\bm,[\bm,\btheta]].
\end{gather*}
Thus $[\bm,[\bm,\btheta]] = 0$.
In other words, $[\bm,\btheta]$ determines a strong homotopy derivation of $(V,m)$.
This gives a reservoir of examples.
\end{example}

%%%%%%%%%%%

\subsection{Strong homotopy inner derivation} \label{SECInner}

As an example of a strong homotopy derivation, we can define a strong homotopy inner derivation of an $A_\infty$ algebra.

First observe that in the case $\oP=\oAss$, Definition \ref{DEFDerivation} of degree $k$ derivation $\theta$ of an $\oP$ algebra $(A,m)$ boils down to the relations $d(\theta(x))=(-1)^k\theta(d(x))$ and $\theta(xy)=\theta(x)y+(-1)^{k|x|}x\theta(y)$.
After passing to $V$, the former relation stays and the latter becomes
$$\theta (xy)=(-1)^k(\theta(x)y)+(-1)^k(-1)^{k|x|}(x\theta(y)).$$
Notice that on $V$, the multiplication has degree $+1$ and satisfies
$$(xy)z+(-1)^{|x|}x(yz)=0.$$
Such a $(V,m)$ is sometimes called a dg anti-associative degree $+1$ algebra \cite{MR}.

\begin{proposition}
Let $a\in V$ have degree $k$ and satisfy $d(a)=0$.
Then the map
$$\theta(x)=ax+(-1)^{k|x|}xa$$
is a derivation of $V$ of degree $k+1$.
We call such a derivation \emph{inner}.
\end{proposition}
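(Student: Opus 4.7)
The claim amounts to two separate identities that a degree-$(k+1)$ derivation $\theta$ of the dg anti-associative degree $+1$ algebra $V$ must satisfy: the differential compatibility $d \theta = (-1)^{k+1} \theta d$, and the Leibniz-type identity
$$\theta(xy) = (-1)^{k+1}\theta(x) y + (-1)^{k+1}(-1)^{(k+1)|x|} x \theta(y),$$
which is the formula displayed immediately before the proposition with $k$ replaced by $k+1$. Both are verified by direct expansion of $\theta(x)=ax+(-1)^{k|x|}xa$ using $d(a)=0$, $|a|=k$, and the two structural relations on $V$.

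For the differential compatibility, I would expand $d(\theta(x))=d(ax)+(-1)^{k|x|}d(xa)$ using the Leibniz rule $d(uv)=-d(u)v-(-1)^{|u|}u\,d(v)$, which is the $n=2$ case of the suspended $A_\infty$ relation \eqref{EQAinfty} (equivalently, the $n=2$ case in Proposition \ref{shder}). Because $d(a)=0$, the mixed terms drop and the expansion collapses to $(-1)^{k+1}a\,d(x)+(-1)^{k|x|+1}d(x)a$, which one recognises as $(-1)^{k+1}\theta(d(x))$ after a brief sign check using $|d(x)|=|x|+1$.

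For the Leibniz-type identity, I would expand each of $\theta(xy)$, $\theta(x)y$ and $x\theta(y)$ by pushing every pair of parentheses to the right via anti-associativity $(uv)w=(-1)^{|u|+1}u(vw)$. Both sides then decompose into three families of terms, $(ax)y$, $x(ay)$ and $x(ya)$. The $(ax)y$ coefficients match trivially; the $x(ya)$ coefficients match after a one-line exponent comparison; and the two contributions to $x(ay)$, one coming from $(xa)y$ inside $\theta(x)y$, the other from $x(ay)$ inside $x\theta(y)$, carry signs differing by exactly $-1$ and therefore cancel.

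The only real obstacle is sign bookkeeping, with four overlapping sources of signs — the $(-1)^{k|x|}$ in the definition of $\theta$, the $(-1)^{|u|+1}$ from each application of anti-associativity, the Leibniz sign for the degree-$+1$ multiplication, and the overall $(-1)^{k+1}$ appropriate to a degree-$(k+1)$ derivation — that must be collated without error. A conceptually cleaner alternative is to verify the statement on the unsuspended $A$-side, where $a$ has degree $k+1$, multiplication is strictly associative, and the corresponding map is (up to an overall $(-1)^k$) the classical graded commutator with $a$, which is a standard graded derivation of degree $k+1$; the identities on $V$ then follow by transport along \eqref{EQSuspension}.
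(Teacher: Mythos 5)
Your proposal is correct and follows essentially the same route as the paper's own proof: for the Leibniz identity both arguments re-associate every product into the three families $a(xy)$, $x(ay)$, $x(ya)$, match the first and third against the two terms of $\theta(xy)$, and let the two $x(ay)$ contributions (one from $(xa)y$ inside $\theta(x)y$, one from $x\theta(y)$) cancel with opposite signs. The one point where you diverge is the differential identity: you expand with the suspended Leibniz rule $d(uv)=-d(u)v-(-1)^{|u|}u\,d(v)$ (the $n=2$ case of the relation in Proposition \ref{shder}) and land on $d\theta=(-1)^{k+1}\theta d$, whereas the paper expands with the unsigned rule $d(uv)=d(u)v+(-1)^{|u|}u\,d(v)$ and lands on $d\theta=(-1)^{k}\theta d$; since $d(a)=0$ the two computations differ only by this overall sign convention, and yours is the one forced by $m_1=d$ on $V=\mathord{\downarrow}A$ and consistent with $\theta$ having degree $k+1$, so this is not a gap.
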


\begin{proof}
First we show that
$$d(\theta(x))=(-1)^k\theta(d(x)).$$
The LHS is 
\begin{gather*}
d(ax+(-1)^{k|x|}xa) = d(a)x+(-1)^{k}ad(x)+(-1)^{k|x|}d(x)a+(-1)^{k|x|}(-1)^{\dg{x}}xd(a) = \\
= (-1)^{k}ad(x)+(-1)^{k|x|}d(x)a,
\end{gather*}
while the RHS is
$$(-1)^{k}ad(x)+(-1)^{k}(-1)^{k(1+\dg{x})}d(x)a.$$

Next, we show that
\begin{equation}\label{inner}
\theta(xy)=(-1)^{\alpha}\theta(x)y+(-1)^{\alpha}(-1)^{\alpha|x|}x\theta(y)
\end{equation}
where $\alpha=k+1$.

We have that
\begin{equation}\label{lhs}
\theta(xy)=a(xy)+(-1)^{k(|x|+|y|+1)}(xy)a
\end{equation}
from the definition of $\theta$. 
The terms on the right hand side of \eqref{inner} yield
$$(-1)^{\alpha}\theta(x)y=(-1)^{\alpha} (ax)y+(-1)^{\alpha}(-1)^{k|x|}(xa)y$$
and
$$(-1)^{\alpha}(-1)^{\alpha |x|}x\theta(y)=(-1)^{\alpha}(-1)^{\alpha |x|}x(ay)+(-1)^{\alpha}(-1)^{\alpha |x|}(-1)^{k|y|}x(ya).$$
We see that after re-associating,
$$(-1)^{\alpha}(ax)y=-(-1)^{\alpha}(-1)^ka(xy)=(ax)y,$$
the first term of \eqref{lhs}, and
$$ (-1)^{\alpha)}(-1)^{\alpha |x|}(-1)^{k|y|}x(ya)=-(-1)^{\alpha}(-1)^{\alpha |x|}(-1)^{k|y|}(-1)^{|x|}(xy)a,$$
the second term of \eqref{lhs}.
For the two remaining terms, we have
$$(-1)^{\alpha}(-1)^{k|x|}(xa)y=-(-1)^{\alpha}(-1)^{k|x|}(-1)^{|x|}x(ay)$$
which cancels out the remaining term
$$(-1)^{\alpha}(-1)^{\alpha |x|}x(ay).$$
\end{proof}

Finally, the following generalization of  
inner derivation is now natural:

\begin{proposition} \label{PROInner}
Let $(V,\{m_n\})$ be an $A_\infty$ algebra and let $a\in V$ have the property that the degree of $a$ is $k$ and $m_1(a)=0$.
 Then the maps
\begin{gather} \label{EQInner}
\theta_n(v_1,\dots,v_n)=\sum_{p=0}^n (-1)^{k\sum_{j=1}^p|v_j|}m_{n+1}(v_1,\dots,v_p,a,v_{p+1},\dots,v_n)
\end{gather}
yield a strong homotopy derivation of degree $k+1$.
\end{proposition}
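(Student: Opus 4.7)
The plan is to recognize the maps $\theta_n$ as the $\pi_1$-components of a specific commutator of coderivations on $T^c(V)$, and then invoke Example \ref{EXADoubleBracket} together with Proposition \ref{PROPAinftyHomDerAsCoder} for a virtually free proof. Concretely, consider the unique coderivation $\boldsymbol{a}: T^c(V) \to T^c(V)$ of degree $k$ determined by requiring $\pi_1\boldsymbol{a}$ to vanish on $V^{\otimes q}$ for $q \geq 1$ and to coincide, on the summand $V^{\otimes 0} = \fld$, with the map $1 \mapsto a$. Unwinding the standard extension formula for a coderivation on the tensor coalgebra, one sees that $\boldsymbol{a}$ acts by inserting $a$ at each of the $n+1$ positions of a word $v_1 \otimes \cdots \otimes v_n$, with Koszul signs exactly $(-1)^{k(|v_1|+\cdots+|v_p|)}$.

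With $\boldsymbol{a}$ in hand, the key computation is to identify $\pi_1[\boldsymbol{m},\boldsymbol{a}]$. I would observe that $\pi_1\boldsymbol{a}\boldsymbol{m}$ vanishes on $V^{\otimes n}$ for $n \geq 1$, because $\boldsymbol{m}|_{V^{\otimes n}}$ is a sum of substring-insertions producing elements of $\bigoplus_{r \geq 1}V^{\otimes r}$, while $\pi_1 \boldsymbol{a}$ detects only the $V^{\otimes 0}$ summand. The hypothesis $m_1(a) = 0$ ensures the analogous $V^{\otimes 0}$-input also vanishes (giving $\theta_0 = 0$, which is consistent with the formula read at $n=0$). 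Consequently $\pi_1[\boldsymbol{m},\boldsymbol{a}]|_{V^{\otimes n}} = \pi_1\boldsymbol{m}\boldsymbol{a}|_{V^{\otimes n}}$, and applying $\pi_1\boldsymbol{m} = m_{n+1}$ to each of the $n+1$ inserted words yields precisely the formula \eqref{EQInner} for $\theta_n$.

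Finally, since $(V,\{m_n\})$ is an $A_\infty$ algebra we have $[\boldsymbol{m},\boldsymbol{m}] = 0$, so by Example \ref{EXADoubleBracket} the coderivation $[\boldsymbol{m},\boldsymbol{a}]$ satisfies $[\boldsymbol{m},[\boldsymbol{m},\boldsymbol{a}]] = 0$. By Proposition \ref{PROPAinftyHomDerAsCoder} this is equivalent to saying that the collection $\{\theta_n\}$ defined by $\pi_1[\boldsymbol{m},\boldsymbol{a}]|_{V^{\otimes n}}$ is a strong homotopy derivation of $(V,\{m_n\})$. The degree is $|\boldsymbol{m}| + |\boldsymbol{a}| = 1 + k$, as required.

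The main obstacle, and really the only non-trivial check, is the sign bookkeeping in the identification $\pi_1\boldsymbol{m}\boldsymbol{a}|_{V^{\otimes n}} = \theta_n$. The signs appearing in \eqref{EQInner} are exactly those coming from applying the coderivation $\boldsymbol{a}$ in the suspended conventions of Section \ref{SECSusp}, so this reduces to verifying that the Koszul sign for transposing $a$ past $v_1,\ldots,v_p$ in $\boldsymbol{a}$ matches the sign $(-1)^{k\sum_{j=1}^{p}|v_j|}$ in the statement; one must be careful that no additional sign is introduced by $\pi_1\boldsymbol{m}$, but this is automatic since $\pi_1$ is a projection and $\boldsymbol{m}$ acts linearly on the already-placed summand.
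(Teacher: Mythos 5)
Your proof is correct and is essentially the paper's own second, coalgebra-level proof of this proposition: the paper likewise realizes $\{\theta_n\}$ as the corestrictions of $[\bm,\btheta]$, where $\btheta$ is the coderivation whose sole nonzero component is $\theta_0(1)=a$, and then invokes Example \ref{EXADoubleBracket} together with the hypothesis $m_1(a)=0$ (which kills the $0$-th corestriction of $[\bm,\btheta]$) exactly as you do. The only point to make explicit is that such an $\boldsymbol{a}$ lives on the \emph{unital} tensor coalgebra $T^c_u(V)=\fld\op V\op V^{\ot 2}\op\cdots$ rather than on $T^c(V)$ as the paper defines it, which is precisely why the paper first passes to $T^c_u(V)$ and only afterwards restricts $[\bm,\btheta]$ back to $T^c(V)$.
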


\begin{proof}
To see that
\begin{gather*}
0=\sum_{r+s=q+1}\sum_{i=0}^{r-1}(-1)^{\beta}\theta_r(v_1,\dots, v_i,m_s(v_{i+1},\dots,v_{i+s}),\dots,v_q) + {} \\
-(-1)^{k+1}(-1)^{\gamma}m_r(v_1,\dots,v_i,\theta_s(v_{i+1},\dots,v_{i+s}),\dots,v_q),
\end{gather*}
we first replace the  $\theta_r$ by $m_{r+1}$ and $\theta_s$ by $m_{s+1}$ and $-(-1)^{k+1}$ by $+(-1)^k$
to obtain
\begin{gather*}
\sum_{r+s=q+1}\sum_{i=0}^{r-1}(-1)^{\beta}\{\sum_{p=0}^i(-1)^{k\sum_{j=1}^p|v_j|}m_{r+1}(v_1,\dots,v_p,a,\dots,v_i,m_s(v_{i+1},\dots,v_{i+s}),\dots,v_q) + {} \\
+\sum_{p=i+s}^q(-1)^{k(1+\sum_{j=1}^p|v_j|)}m_{r+1}(v_1,\dots,v_i,m_s(v_{i+1},\dots,v_{i+s}),\dots,v_p,a,\dots,v_q)\} + {} \\
+(-1)^k(-1)^{\gamma}m_r(v_1,\dots,v_i,\sum_{p=i}^{i+s+1}(-1)^{k\sum_{j=i+1}^p|v_j|}m_{s+1}(v_{i+1},\dots,v_p,a,\dots,v_{i+s+1}),\dots,v_q)
\end{gather*}
where $\beta=\sum_{j=1}^i|v_j|$ and $\gamma=(k+1)\sum_{j=1}^i|v_j|$.

We claim that for each fixed position $t$ of the element $a$, we obtain the $A_\infty$ algebra relation which is equal to $0$.  To see this, fix
$1\leq t\leq q+1$ and let $x_i=v_i$ for $i\leq t-1$, $x_t=a$ and $x_{i+1}=v_i$ for $i>t.$
For each $t$ we have
\begin{gather*}
\sum_{r+s=q+1}\{\sum_{i=t}^{q+1-s}(-1)^{\beta}(-1)^{k\sum_{j=1}^{t-1}|v_j|}m_{r+1}(x_1,\dots,x_{t-1},a,\dots,x_i,m_s(x_{i+1},\dots,x_{i+s}),\dots,x_{q+1}) + {} \\
+\sum_{i=0}^{t-s-1}(-1)^{\beta}(-1)^{k(1+\sum_{j=1}^{t-1}|x_j|)}m_{r+1}(x_1,\dots,x_i,m_s(x_{i+1},\dots,x_{i+s}),\dots,x_{t-1},a,\dots,x_{q+1}) + {} \\
+\sum_{i=t-s}^{t-1}(-1)^k(-1)^{\gamma}(-1)^{k\sum_{j=i+1}^{t-1}|v_j|}m_r(x_1,\dots,x_i,m_{s+1}(x_{i+1},\dots,x_{t-1},a,\dots,x_{i+s+1}),\dots,x_{q+1})\}
\end{gather*}
which, after multiplying the first line by $1$ written as $(-1)^k(-1)^k$ and adding in  $0$ written as (recall that $m_1(a)=0$ )
$$(-1)^k(-1)^{k\sum_{j=1}^{t-1}|x_j|}(-1)^{\sum_{j=1}^{t-1}|x_j|}m_{q+1}(x_1,\dots,x_{t-1},m_1(a),x_{t+1},\dots,x_{q+1}),$$
can be seen to equal $(-1)^k(-1)^{k\sum_{j=1}^{t-1}|x_j|}$ times the $A_\infty$ algebra relation for $q+1$ inputs $(v_1,\dots,v_{t-1},a,v_{t+1},\dots,v_q)$.
\end{proof}

As an extension of Example \ref{EXADoubleBracket} we get another proof of Proposition \ref{PROInner} on the coalgebra level.

\begin{proof}[Proof of Proposition \ref{PROInner}]
First, we need to replace $T^c(V)$ by its \emph{unital} version,
$$T^c_u(V) := \fld\op V\op V^{\ot 2}\op\cdots.$$
Recall that any coderivation $\btheta$ on $T^c_u(V)$ is still uniquely determined by its projections $\theta_n:V^{\ot n}\subset T^c_u(V)\To{\btheta}T^c_u(V)\to V$, but this time it includes the $0$-th projection $\theta_0:\fld\to V$.
The formula expressing $\btheta$ in terms of $\theta_n$'s is altered only by allowing the $0$-th projections.
Any coderivation on $T^c(V)$ induces a coderivation on $T^c_u(V)$ by setting its $0$-th projection to $0$.

Now let $a\in V$ and define a coderivation $\theta$ by
$$\theta_0(1):=a, \qquad \theta_n:=0,\quad n\geq 1.$$
Then $[\bm,\btheta]$ is a coderivation\footnote{Again, the formula expressing composition of coderivations in terms of their projections is altered, compared to the non-unital case, only by allowing the $0$-th projections.} on $T^c_u(V)$.
As in Example \ref{EXADoubleBracket}, $[\bm,[\bm,\theta]]=0$.
We wish $[\bm,\btheta]$ restricted to $T^c(V)$ (consequently, it will determine a strong homotopy derivation of $(V,m)$).
This is equivalent to
$$0=[\bm,\btheta]_0=m_1\theta_0=m_1(a).$$
We recognize the assumptions of Proposition \ref{PROInner}.
The projections of the coderivation $[\bm,\btheta]$ are
$$[\bm,\btheta]_n = \sum_{i=1}^n m_{n+1}\oo_i\theta_0$$
and this is exactly \eqref{EQInner}.
\end{proof}

%\todo[inline]{This suggests considering \emph{curved} $A_\infty$ algebras.}

%%%%%%%%%%%
%%%%%%%%%%%
%%%%%%%%%%%

\section{\texorpdfstring{Strong homotopy derivations of $L_\infty$ algebras}{Strong homotopy derivations of L-infinity algebras}} \label{SECLoo}

We make degree $k$ strong homotopy derivations of $L_\infty$ algebras explicit.

Let $\oP$ be the operad for Lie algebras, that is 
$$\oP:=\oLie=\Fr{\fld\{\nu\}}/((1+\kappa+\kappa^2)\cdot(\nu\oo_1\nu)),$$
where $\fld\{\nu\}$ is the $1$-dimensional sign representation of $\Sigma_2$ and $\kappa\in\Sigma_3$ is a cycle of length $3$.
Its minimal resolution is (see e.g. \cite{MHAVRO}) $\oR:=(\Fr{X},\dd_\oR)\To{\rho_\oR}(\oLie,0)$, where $X$ is the $\Sigma$ module whose arity $n$ component is the $1$-dimensional sign representation $\fld\{x^n\}$ of $\Sigma_n$, where $\dg{x^n}=2-n$, and $\dd_\oR$ is a derivation differential given by
$$\dd_\oR(x^n):=\sum_{i+j=n+1} (-1)^{j(i-1)} \sum_{\sigma\in\USh{j,i-1}} \sgn{\sigma} \sigma\cdot(x^i\oo_1 x^j),$$
where $\USh{a,b}=\set{\sigma\in\Sigma_{a+b}}{\sigma(1)<\cdots<\sigma(a),\ \sigma(a+1)<\cdots<\sigma(a+b)}$ is the set of $(a,b)$-unshuffles.
The quism $\rho_\oR:\oR\to\oP$ is given by
$$\rho_\oR(x^2):=\nu, \quad \rho_\oR(x^n):=0\mbox{ for }n\geq 3.$$
Then the associated operad with derivation is
$$\oDP{k}:=\frac{\oLie\fpr \Fr{\Phi}}{(\phi\oo\nu-\nu\oo_1\phi-\nu\oo_2\phi)},$$
where $\Phi:=\fld\{\phi\}$ with $\phi$ a degree $k$ element of arity $1$.
Its quasi-free cofibrant resolution is $$\oDR:=(\Fr{X\op\Phi\op\ul{X}},\dd_{\oDR{k}})\To{\rho_{\oDR{k}}}(\oDP{k},0),$$
where the differential $\dd_{\oDR{k}}$ is given by
\begin{align*}
&\dd_{\oDR{k}}(x^n) := \sum_{i+j=n+1} (-1)^{j(i-1)} \sum_{\sigma\in\USh{j,i-1}} \sgn{\sigma} \sigma\cdot(x^i\oo_1 x^j),\\
&\dd_{\oDR{k}}(\phi) := 0, \\
&\dd_{\oDR{k}}(\ul{x}^n) := \phi\oo_1 x^n - (-1)^{nk}\sum_{l=1}^n x^n\oo_l\phi + {} \\
&\phantom{\dd_{\oDR{k}}(\ul{x}^n) := }- \sum_{i+j=n+1} (-1)^{k+j(i-1)} \hspace{-1ex} \sum_{\sigma\in\USh{j,i-1}} \hspace{-1ex} \sgn{\sigma} \sigma\cdot\left( \ul{x}^i\oo_1 x^j + (-1)^{(k+1)i}x^i\oo_1\ul{x}^j\right)
\end{align*}
and the quism $\rho_{\oDR{k}}$ by
$$\rho_{\oDR{k}}(x^n):=\rho_\oR(x^n),\quad \rho_{\oDR{k}}(\phi):=\phi,\quad \rho_{\oDR{k}}(\ul{x}^n)=0.$$

Thus a $\oDR{k}$ algebra $\beta:\oDR{k}\to\oEnd{A}$ on a dg vector space $(A,d)$ consists of two collections $\set{l_n}{n\geq 2}$ and $\set{\theta_n}{n\geq 1}$ of skew symmetric operations
\begin{gather*}
l_n:A^{\ot n}\to A,\quad \dg{l_n}=2-n, \\
\theta_n:A^{\ot n}\to A,\quad \dg{\theta_n}=k-n+1
\end{gather*}
given by $l_n:=\beta(x^n)$, $\theta_1:=(-1)^{k+1}\beta(\phi)$ and $\theta_n:=\beta(\ul{x}^n)$ for $n\geq 2$,
satisfying
\begin{align}
&d\oo_1 l_n-(-1)^{n}\sum_{m=1}^n l_n\oo_m d = \hspace{-1ex} \sum_{i+j=n+1} \hspace{-1ex} (-1)^{j(i-1)} \hspace{-3ex} \sum_{\sigma\in\USh{j,i-1}} \hspace{-3ex} \sgn{\sigma} (l_i\oo_1 l_j)\oo\sigma^{-1}, \ n\geq 2, \label{EQLInftyAndDer} \\
&\begin{multlined}
d\oo_1\theta_n-(-1)^{k-n+1}\sum_{m=1}^n \theta_n\oo_m d = \\
= - \hspace{-1ex} \sum_{i+j=n+1} \hspace{-1ex} (-1)^{k+j(i-1)} \hspace{-3ex} \sum_{\sigma\in\USh{j,i-1}} \hspace{-3ex} \sgn{\sigma} \left( \theta_i\oo_1 l_j + (-1)^{(k+1)i}l_i\oo_1\theta_j\right)\oo\sigma^{-1}, \quad n\geq 1. \nonumber
\end{multlined}
\end{align}
The indices $i,j,l$ are bound to values where both sides have sense.
Recall that for $f:A^{\ot n}\to A$, we define
$$(\sigma\cdot f)(a_1\ot\cdots\ot a_n)=(f\oo\sigma^{-1})(a_1\ot\cdots\ot a_n)=\pm f(a_{\sigma(1)}\ot\cdots\ot a_{\sigma(n)}),$$
where $\pm$ is the Koszul sign.

The above properties of $l_n$'s and $\theta_n$'s are expressed by ``$\{\theta_n\}$ is a strong homotopy derivation of the $L_\infty$ algebra $(A,\{l_n\})$''.

The sign in the formula defining $\theta_1$ is chosen so that the relation for homotopy derivation is shorter - the terms containing $\theta_i$ for $i\geq 2$ and terms containing $\theta_1$ can then be treated equally since $\sigma\cdot(l_n\oo_1\theta_1)=(\sigma\cdot l_n)\oo_{\sigma(1)}\theta_1=\sgn{\sigma} l_n\oo_{\sigma(1)}\theta_1$.
An even more succinct description can be obtained by passing to the suspension $V=\downar A$, as in Subsection \ref{SECSusp} for $A_\infty$ algebras:

\begin{proposition}
An $L_\infty$ algebra structure is equivalently given by a collection of degree one graded symmetric linear maps $l_n:V^{\otimes n}\rightarrow V,n\geq 1,$ that satisfy the relations (higher order Jacobi relations)
$$\sum_{j=1}^n\sum_{\sigma}(-1)^{e(\sigma)}l_{n-j+1}(l_j(v_{\sigma(1)},\dots, v_{\sigma(j)}),v_{\sigma(j+1)},\dots,v_{\sigma(n)})=0$$
where $\sigma$ runs over all $(j, n-j)$ unshuffle permutations.
The exponent $e(\sigma)$ is the sum of the products of the degrees of the elements that are permuted.

A strong homotopy derivation of degree $k$ of an $L_\infty$ algebra $(V,\{l_n\})$ is a collection of degree $k$ graded symmetric linear maps $\theta_q:V^{\otimes q}\rightarrow V,q\geq 1,$ that satisfy the relations
\begin{gather*}
\sum_{j=1}^n\sum_\sigma(-1)^{e(\sigma)}\theta_{n-j+1}(l_j(v_{\sigma(1)},\dots,v_{\sigma(j)}),v_{\sigma(j+1)},\dots,v_{\sigma(n)}) + {} \\
-(-1)^k(-1)^{e(\sigma)}l_{n-j+1}(\theta_j(v_{\sigma(1)},\dots,v_{\sigma(j)}),v_{\sigma(j+1)},\dots,v_{\sigma(n)})=0
\end{gather*}
where $\sigma$ runs over all $(j,n-j)$ unshuffle permutations.
\end{proposition}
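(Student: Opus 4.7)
The plan is to mimic the proof of Proposition \ref{shder} by absorbing the differential into the operations and then passing to the suspension $V:=\downar A$. First I would set $l_1:=d$, which combines the two sides of \eqref{EQLInftyAndDer} into a single sum indexed over all pairs $(i,j)$ with $i+j=n+1$, now including the boundary terms where $i=1$ or $j=1$. Similarly, the sign convention $\theta_1:=(-1)^{k+1}\beta(\phi)$ was arranged precisely so that after this absorption, the relation for $\{\theta_n\}$ becomes a uniform sum of two families of terms, one with $\theta$ outside and one with $\theta$ inside.

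Next I would precompose each operation with $\upar^{\ot n}$ and postcompose with $\downar$. By the suspension rule \eqref{EQSuspension}, the degree $2-n$ map $l_n$ becomes a degree $+1$ map $V^{\ot n}\to V$, and the degree $k-n+1$ map $\theta_n$ becomes a degree $k$ map. A standard computation (identical to the one in \cite{MHAVRO}) shows that under suspension the skew-symmetry on $A$ with respect to the sign representation of $\Sigma_n$ is converted to graded symmetry on $V$, so the maps $l_n, \theta_n$ are now graded symmetric on $V$.

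I would then rewrite the unshuffle sum in the standard $L_\infty$ form. An unshuffle $\sigma\in\USh{j,i-1}$ together with the composition $x^i\oo_1 x^j$ encodes placing the inner operation on the first $j$ arguments after relabeling by $\sigma^{-1}$; summing over such $\sigma$ and using the graded symmetry on $V$ reproduces exactly the $(j,n-j)$-unshuffle sum appearing in the higher Jacobi relation, where the inner operation $l_j$ is applied to the chosen $j$ arguments and $l_{n-j+1}$ consumes its output together with the remaining $n-j$ arguments. The derivation relation for $\theta$ is obtained in the same manner from the second equation of \eqref{EQLInftyAndDer}, splitting into a $\theta$-outside and $\theta$-inside sum.

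The main obstacle is the sign bookkeeping: one must verify that the combination of $(-1)^{j(i-1)}$, $\sgn{\sigma}$, the suspension signs produced by commuting $\upar$ and $\downar$ past the operations, and the Koszul signs coming from the now-symmetric structure on $V$ collectively reduce to the single Koszul exponent $e(\sigma)$ (plus the $(-1)^k$ factor in the $\theta$-inside term). This is a routine but unenlightening calculation, essentially identical to the classical one relating the operadic and symmetric coalgebra descriptions of $L_\infty$, so I would either carry it out term by term or, more economically, invoke the suspension isomorphism $\csk:\oLie_\infty\iso\oCom_\infty\{-1\}$ and observe that the same isomorphism intertwines $\oDR{k}$ with its symmetric counterpart, reducing the verification to the first part of the proposition which is already standard.
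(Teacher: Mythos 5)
Your plan is essentially the paper's own proof: absorb the differential into the operations, suspend via \eqref{EQSuspension}, and observe that skew symmetry on $A$ becomes graded symmetry on $V=\downar A$. The only detail you elide is that the leftover suspension signs do not cancel to $e(\sigma)$ on the nose but must be absorbed by rescaling the operations --- the paper sets $L_1:=-d$ and $L_n:=(-1)^{\binom{n}{2}}l_n$ (and likewise for the $\theta_n$) --- which is precisely the ``renaming of the maps'' your sign bookkeeping would force you to discover.
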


\begin{proof}
Define 
$$L_1:=-d,\quad L_n:=(-1)^{\binom{n}{2}}l_n,\quad \Theta_1:=\theta_1,\quad \Theta_n:=(-1)^{\binom{n}{2}}\theta_n \quad\textrm{for }n\geq 2,$$
and then suspend \eqref{EQLInftyAndDer} as in the proof of the analogous Proposition \ref{shder} for $A_\infty$ algebras.
You get the desired equation up to renaming of the maps.
Also notice that \emph{skew symmetric} map $l_n:A^{\ot n}\to A$ becomes \emph{symmetric} as a map $V^{\ot n}\to V$; similarly for $\theta_n$.
\end{proof}

Again, we remark that this description of $L_\infty$ algebras differs from but is equivalent to the original definition \cite{LS} in which the maps $l_n$ have degree $2-n$ and are graded skew symmetric with the signs adjusted.
The same notion of strong homotopy derivation of $L_\infty$ algebra appeared, for $k=0$, in \cite{LT} and \cite{T}.

It is well known \cite{LM},\cite{LS} that the structure maps $l_n$'s may be extended to a degree $+1$ coderivation $\boldsymbol{l}$ on the symmetric coalgebra  $S^c(V)$ on $V$, and the $L_\infty$ relations are equivalent to $\boldsymbol{l}^2=0$.  
Similarly:

\begin{proposition} \label{PROPLinftyHomDer}
A degree $k$ strong homotopy derivation $\{\theta_q\}$ of a $L_\infty$ algebra $(V,\{l_n\})$ is equivalently described by a degree $k$ coderivation $\boldsymbol{\theta}:S^c(V)\to S^c(V)$ satisfying
$[\boldsymbol{l},\boldsymbol{\theta}]=0$. \qed
\end{proposition}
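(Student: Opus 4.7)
The plan is to mimic the argument for Proposition \ref{PROPAinftyHomDerAsCoder}, replacing the tensor coalgebra with the symmetric coalgebra $S^c(V)$. Recall that a coderivation $\boldsymbol{\theta}$ on $S^c(V)$ is uniquely determined by its corestriction to cogenerators, i.e.\ by the family of maps $\theta_q := \pi_1 \boldsymbol{\theta}|_{V^{\odot q}} : V^{\odot q} \to V$ for $q \geq 1$. Since $\boldsymbol{\theta}$ is graded symmetric by virtue of being a coderivation on $S^c(V)$, the $\theta_q$ are automatically graded symmetric, so this is exactly the data of a family $\{\theta_q\}_{q\geq 1}$ of graded symmetric degree $k$ maps. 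The same is true for $\boldsymbol{l}$ with its projections $l_n$.

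The first step is to compute the graded commutator
$$[\boldsymbol{l},\boldsymbol{\theta}] := \boldsymbol{l}\circ\boldsymbol{\theta} - (-1)^k\boldsymbol{\theta}\circ\boldsymbol{l}$$
and observe that it is again a coderivation on $S^c(V)$, since the graded commutator of two coderivations is a coderivation. Hence it is determined by its own projections $\pi_1[\boldsymbol{l},\boldsymbol{\theta}]|_{V^{\odot n}}$, and the equation $[\boldsymbol{l},\boldsymbol{\theta}]=0$ is equivalent to the vanishing of each such projection.

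Next I would evaluate $\pi_1 \boldsymbol{l}\boldsymbol{\theta}$ and $\pi_1\boldsymbol{\theta}\boldsymbol{l}$ on a symmetric word $v_1\odot\cdots\odot v_n$. Using the standard formula for coderivations on $S^c(V)$, the operator $\boldsymbol{\theta}$ acts on $V^{\odot n}$ by the sum over $(j,n-j)$-unshuffles $\sigma$ of $\theta_j$ applied to $j$ of the inputs (with Koszul sign $e(\sigma)$), tensored with the remaining $n-j$ inputs; then $\boldsymbol{l}$ is postcomposed and only the $l_{n-j+1}$ component survives under $\pi_1$. The result is
$$\pi_1(\boldsymbol{l}\boldsymbol{\theta})(v_1\odot\cdots\odot v_n) = \sum_{j=1}^n \sum_\sigma (-1)^{e(\sigma)} l_{n-j+1}\bigl(\theta_j(v_{\sigma(1)},\dots,v_{\sigma(j)}),v_{\sigma(j+1)},\dots,v_{\sigma(n)}\bigr),$$
and analogously for $\pi_1(\boldsymbol{\theta}\boldsymbol{l})$ with the roles of $l$ and $\theta$ swapped. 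Substituting these into the commutator and setting it equal to zero reproduces exactly the strong homotopy derivation relations from the previous proposition.

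The main obstacle is bookkeeping of signs: I need to verify that the overall factor $-(-1)^k$ in front of the $l(\theta(\cdots),\dots)$ terms matches the $-(-1)^k$ built into $[\boldsymbol{l},\boldsymbol{\theta}]$, and that the Koszul sign $e(\sigma)$ emerging from the coderivation formula on $S^c(V)$ matches the $e(\sigma)$ in the strong homotopy derivation relation (accounting for the fact that the $\theta_q$ and $l_n$ each have odd degree after the suspension $V = \downarrow A$). This sign verification is routine but tedious; once confirmed, the projections of $[\boldsymbol{l},\boldsymbol{\theta}]$ onto $V$ vanish for all $n\geq 1$ if and only if $\{\theta_q\}$ satisfies the stated relations, completing the equivalence.
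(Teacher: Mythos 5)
Your proposal is correct and is exactly the argument the paper has in mind: the paper itself gives no proof here, deferring to Tolley's thesis, but Proposition \ref{PROPAinftyHomDerAsCoder} is proved by precisely this method (a coderivation is determined by, and vanishes iff, its corestrictions to $V$; the commutator of coderivations is a coderivation; projecting $[\boldsymbol{l},\boldsymbol{\theta}]$ onto $V$ reproduces the defining relations), and your symmetric-coalgebra version is its direct analogue. The sign bookkeeping you flag does work out, since multiplying the projected relation $\pi_1[\boldsymbol{l},\boldsymbol{\theta}]=0$ by $-(-1)^k$ recovers the stated homotopy-derivation identity verbatim.
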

See \cite{T} for details.

An obvious analogue of Example \ref{EXADoubleBracket} provides examples of strong homotopy derivations of $L_\infty$ algebras.

Following Subsection \ref{SECInner}, we define strong homotopy inner derivations of $L_\infty$ algebras.
A Lie algebra on $A$ becomes, on $V$, an algebra with a symmetric degree $+1$ bracket and the Jacobi identity has the form
$$[[x,y],z]+(-1)^{|y||z|}[[x,z],y]+(-1)^{|x|(|y|+|z|)}[[y,z],x]=0.$$
A degree $k$ derivation $\theta$ of this algebra is defined by $d(\theta(x))=(-1)^{k}\theta(d(x))$ and
$$\theta[x,y]=(-1)^k[\theta(x),y]+(-1)^k(-1)^{k|x|}[x,\theta(y)].$$
A straightforward calculation yields:

\begin{proposition}
Let $a\in V$ have degree $k$ and satisfy $d(a)=0$.
Then the map
$$\theta(x)=[a,x]$$
is a derivation of $V$ of degree $k+1$, called an \emph{inner} derivation.
\qed
\end{proposition}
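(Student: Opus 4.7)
The plan is to verify directly the two defining relations of an inner derivation, in close parallel with the associative inner derivation just proved. Concretely, I would check that the map $\theta(x) = [a,x]$ satisfies the $d$-compatibility and the graded Leibniz rule for the symmetric degree $+1$ bracket on $V$, with the sign conventions appropriate to a derivation of ``degree $k+1$'' in the sense used throughout the section.

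For the $d$-compatibility, expand $d[a,x]$ by the Leibniz rule of $d$ with respect to the bracket:
$$d[a,x] = [d(a), x] + (-1)^{|a|}[a, d(x)].$$
The hypothesis $d(a)=0$ kills the first term, leaving $(-1)^k[a,d(x)] = (-1)^k \theta(d(x))$, which is the relation required.

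The Leibniz rule is the substantive step. The idea is to apply the graded Jacobi identity to the triple $(a,x,y)$:
$$[[a,x],y] + (-1)^{|x||y|}[[a,y],x] + (-1)^{k(|x|+|y|)}[[x,y],a] = 0,$$
and solve for the third term. Using the graded symmetry of the degree $+1$ bracket, rewrite $[[x,y],a]$ as $\pm[a,[x,y]] = \pm\theta[x,y]$ and $[[a,y],x]$ as $\pm[x,[a,y]] = \pm[x,\theta(y)]$. Collecting signs then produces directly the identity
$$\theta[x,y] = (-1)^k[\theta(x), y] + (-1)^k(-1)^{k|x|}[x,\theta(y)],$$
which is the Leibniz condition in the form stated in the definition just above the proposition.

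The main obstacle is the careful bookkeeping of Koszul signs. Two sign effects must be handled with care: first, the graded symmetry of the bracket carries an extra shift beyond the plain Koszul sign because the bracket itself has degree $+1$; second, the intermediate element $[a,y]$ has degree $k + |y| + 1$, which contributes a nontrivial Koszul factor when it is transposed past $x$. Once these two contributions are combined with the signs appearing in the Jacobi identity, the three terms reassemble into exactly the Leibniz formula, and no further structural input is needed.
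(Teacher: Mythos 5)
Your strategy---direct verification of the two defining relations, extracting the Leibniz rule from the graded Jacobi identity applied to $(a,x,y)$---is exactly the ``straightforward calculation'' the paper has in mind here, so the route is right. The problem is that the identity you claim to land on is not the correct one, and since the entire content of this proposition is the sign, that is a real gap. You assert the computation yields
$$\theta[x,y]=(-1)^k[\theta(x),y]+(-1)^k(-1)^{k|x|}[x,\theta(y)],$$
``the Leibniz condition in the form stated in the definition just above the proposition.'' But that definition is the Leibniz rule for a derivation of degree $k$, whereas $\theta=[a,\cdot]$ has degree $k+1$: the bracket on $V$ has degree $+1$, so bracketing with the degree $k$ element $a$ raises degree by $k+1$. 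The relation to be verified---and the one the Jacobi identity actually produces---is the degree $k+1$ instance
$$\theta[x,y]=(-1)^{k+1}[\theta(x),y]+(-1)^{k+1}(-1)^{(k+1)|x|}[x,\theta(y)],$$
which differs from your formula by a global sign and by an extra $(-1)^{|x|}$ in the second term; compare the associative analogue \eqref{inner}, which is proved with $\alpha=k+1$, not $\alpha=k$. Carrying out your own plan confirms this: solving the Jacobi identity for $[[x,y],a]$, using $[x,a]=(-1)^{k|x|}\theta(x)$, $[[y,a],x]=(-1)^{k|y|}(-1)^{(k+1+|y|)|x|}[x,\theta(y)]$ and $\theta[x,y]=(-1)^{k(|x|+|y|+1)}[[x,y],a]$, the exponents collapse to $k+1$ and $(k+1)|x|$. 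You correctly flag that $|[a,y]|=k+|y|+1$ is where the extra sign enters, but your final formula does not reflect it, so as written the conclusion is false and the computation, done carefully, cannot reach it.

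Two smaller points. The ``extra shift beyond the plain Koszul sign'' you anticipate in the symmetry of the bracket is not there: after suspension the bracket is graded symmetric with the plain Koszul sign, $[u,v]=(-1)^{|u||v|}[v,u]$, which is what the paper means by a ``symmetric degree $+1$ bracket,'' and the computation above uses only that. And the same degree bookkeeping affects the $d$-compatibility: a degree $k+1$ derivation should satisfy $d(\theta(x))=(-1)^{k+1}\theta(d(x))$; since the $n=2$ compatibility of $l_1$ with the degree $+1$ bracket carries an overall minus sign, $d[u,v]=-[du,v]-(-1)^{|u|}[u,dv]$, the hypothesis $d(a)=0$ gives exactly $(-1)^{k+1}\theta(d(x))$ rather than the $(-1)^{k}\theta(d(x))$ you state.
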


%\begin{proof}
%We compute
%$$(-1)^{k+1}[\theta(x),y]+(-1)^{k+1}(-1)^{(k+1)|x|}[x,\theta(y)]-\theta[x,y]$$
%and obtain
%\begin{gather*}
%(-1)^{k+1}[[a,x],y]+(-1)^{k+1}(-1)^{(k+1)|x|}[x,[a,y]]-[a,[x,y]] = \\
%=(-1)^{k+1}[[a,x],y]+(-1)^{k+1}(-1)^{(k+1)|x|}(-1)^{|x|(k+|y|+1)}[[a,y],x] + \\
%-(-1)^{k(|x|+|y|+1)}[[x,y],a]
%\end{gather*}
%which can be seen to equal $(-1)^{k+1}$ times the Jacobi expression for the input $(a,x,y)$ and is thus equal to $0$.
%\end{proof}

The inner derivation generalizes to strong homotopy inner derivation as follows:

\begin{proposition}
Let $(V, \{l_n\})$ be an $L_\infty$ algebra and let $a\in V$ have the property that $l_1(a)=0$ and the degree of $a$ is k.  Then the maps
$$\theta_n(v_1,\dots,v_n)=l_{n+1}(a,v_1,\dots,v_n)$$
define a strong homotopy derivation of degree $k+1$ of $V$.
\qed
\end{proposition}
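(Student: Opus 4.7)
My plan is to mirror the coalgebra argument used in the second proof of Proposition \ref{PROInner}, exploiting the coderivation reformulation of strong homotopy derivations of $L_\infty$ algebras given by Proposition \ref{PROPLinftyHomDer}. The Lie analogue of Example \ref{EXADoubleBracket} provides the central mechanism: if $\boldsymbol{l}$ is the codifferential on the symmetric coalgebra encoding the $L_\infty$ structure and $\boldsymbol{\theta}$ is any coderivation, then the Jacobi identity together with $[\boldsymbol{l},\boldsymbol{l}]=0$ forces $[\boldsymbol{l},[\boldsymbol{l},\boldsymbol{\theta}]]=0$, so $[\boldsymbol{l},\boldsymbol{\theta}]$ automatically satisfies the defining equation of a strong homotopy derivation, provided it lives on $S^c(V)$ rather than on the enlarged unital version.

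The plan is therefore as follows. First, pass to the unital symmetric coalgebra
$$S^c_u(V):=\fld\oplus V\oplus S^2(V)\oplus\cdots,$$
on which a coderivation is uniquely determined by its projections $\theta_n\colon S^n(V)\to V$ for $n\geq 0$, and any coderivation on $S^c(V)$ extends to $S^c_u(V)$ by setting its $0$-th projection to zero. Define a degree $k$ coderivation $\boldsymbol{\theta}$ on $S^c_u(V)$ by
$$\theta_0(1):=a,\qquad \theta_n:=0\text{ for }n\geq 1.$$
Then $[\boldsymbol{l},\boldsymbol{\theta}]$ is a coderivation on $S^c_u(V)$ of degree $k+1$, and by Jacobi it already satisfies $[\boldsymbol{l},[\boldsymbol{l},\boldsymbol{\theta}]]=0$. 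The next step is to check that $[\boldsymbol{l},\boldsymbol{\theta}]$ restricts to $S^c(V)$, which is equivalent to the vanishing of its $0$-th projection. Since the only nonzero projection of $\boldsymbol{\theta}$ is $\theta_0$, we have
$$[\boldsymbol{l},\boldsymbol{\theta}]_0=l_1\circ\theta_0=l_1(a),$$
and this vanishes precisely by the hypothesis $l_1(a)=0$. At that point Proposition \ref{PROPLinftyHomDer} yields a strong homotopy derivation of $(V,\{l_n\})$.

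It then remains to identify the projections of $[\boldsymbol{l},\boldsymbol{\theta}]$ with the maps in the statement. Because $\boldsymbol{\theta}$ only acts on the scalar summand of $S^c_u(V)$ (sending $1\mapsto a$), and $\boldsymbol{l}$ has no component landing in $\fld$, the composition $\boldsymbol{l}\,\boldsymbol{\theta}$ contributes to the $n$-th projection by splitting an input $v_1\cdots v_n\in S^n(V)$ off the scalar via the coproduct, applying $\theta_0$ to produce $a$, and then applying $l_{n+1}$. Using cocommutativity of $S^c_u(V)$, the resulting sum over unshuffles collapses to a single term
$$[\boldsymbol{l},\boldsymbol{\theta}]_n(v_1,\ldots,v_n)=l_{n+1}(a,v_1,\ldots,v_n),$$
which is exactly the claimed formula. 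The contribution of $\boldsymbol{\theta}\,\boldsymbol{l}$ vanishes on $S^n(V)$ for $n\geq 1$ because $\boldsymbol{l}$ lands in the positive-degree part of $S^c_u(V)$, on which all higher $\theta_n$ vanish.

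The main obstacle I expect is the sign bookkeeping in the last step, in particular verifying that the single term produced by cocommutative symmetrization indeed has coefficient $1$, and matching the degree shift $k\mapsto k+1$ (which is forced by the fact that $\boldsymbol{\theta}$ has degree $k$ while $\boldsymbol{l}$ has degree $+1$, exactly as in the $A_\infty$ case). Once the correspondence between the coderivation picture and the explicit formulas for $\theta_n$ is nailed down---essentially a cocommutative analogue of the computation at the end of the second proof of Proposition \ref{PROInner}---the result follows; alternatively, one can avoid the coalgebra detour altogether and verify the strong homotopy derivation relations directly by fixing the position of $a$ among the inputs and recognizing the resulting sum as an evaluation of the higher Jacobi relation on $(a,v_1,\ldots,v_n)$, augmented by a vanishing $l_1(a)$ term, which is the route taken in Proposition \ref{PROInner}.
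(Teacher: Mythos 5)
Your proposal is correct and is essentially the proof the paper intends: the paper states that this proposition ``is proved either by a direct calculation or on the coalgebra level as in Proposition \ref{PROInner} with obvious modifications to the $L_\infty$ case,'' and your argument is exactly that coalgebra-level adaptation (unital symmetric coalgebra, $\btheta$ with sole projection $\theta_0(1)=a$, the bracket $[\bl,\btheta]$ restricting to $S^c(V)$ precisely because $l_1(a)=0$, and its projections recovering $l_{n+1}(a,-,\dots,-)$ via the unique $(0,n)$-unshuffle). The degree count ($|\btheta|=k$, $|\bl|=1$, hence $|[\bl,\btheta]|=k+1$) and the identification of projections are both right, so nothing essential is missing.
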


This is proved either by a direct calculation or on the coalgebra level as in Proposition \ref{PROInner} with obvious modifications to the $L_\infty$ case.

%\begin{proof}
%We verify that
%\begin{gather*}
%\sum_{j=1}^n\sum_\sigma(-1)^{e(\sigma)}\theta_{n-j+1}(l_j(v_{\sigma(1)},\dots,v_{\sigma(j)}),v_{\sigma(j+1)},\dots,v_{\sigma(n)}) + \\
%-(-1)^{k+1}(-1)^{e(\sigma)}l_{n-j+1}(\theta_j(v_{\sigma(1)},\dots,v_{\sigma(j)}),v_{\sigma(j+1)},\dots,v_{\sigma(n)})=0.
%\end{gather*}
%We have
%\begin{gather*}
%\sum_{j=1}^n\sum_\sigma(-1)^{e(\sigma)}l_{n-j+2}(a,l_j(v_{\sigma(1)},\dots,v_{\sigma(j)}),v_{\sigma(j+1)},\dots,v_{\sigma(n)}) + \\
%+(-1)^k(-1)^{e(\sigma)}l_{n-j+1}(l_{j+1}(a,v_{\sigma(1)},\dots,v_{\sigma(j)}),v_{\sigma(j+1)},\dots,v_{\sigma(n)})
%\end{gather*}
%which, by using the symmetry property of the $l_n$'s, may be rewritten as
%\begin{gather*}
%\sum_{j=1}^n\sum_\sigma(-1)^{e(\sigma)}(-1)^{k(1+\sum_{i=1}^j|v_{\sigma(i)}|)}l_{n-j+2}(l_j(v_{\sigma(1)},\dots,v_{\sigma(j)}),a,v_{\sigma(j+1)},\dots,v_{\sigma(n)}) + \\
%+(-1)^k(-1)^{e(\sigma)}l_{n-j+1}(l_{j+1}(a,v_{\sigma(1)},\dots,v_{\sigma(j)}),v_{\sigma(j+1)},\dots,v_{\sigma(n)})
%\end{gather*}
%to which we may add $0$ written as
%$$(-1)^kl_{n+1}(l_1(a),v_1,\dots,v_n)$$
%which gives us $(-1)^k$ times the $L_\infty$ algebra relation for $n+1$ inputs $(a,v_1,\dots,v_n)$ and is thus equal to $0$.
%\end{proof}

%%%%%%%%%%%
%%%%%%%%%%%
%%%%%%%%%%%

\section{Symmetrization and Composition} \label{SECSymComp}

We recall that there is a well known injective coalgebra map $\chi :S^c(V)\longrightarrow T^c(V)$ given by
$$\chi(v_1,\dots,v_n)=\sum_{\sigma\in\Sigma_n}(-1)^{e(\sigma)}v_{\sigma(1)}\otimes \dots\otimes v_{\sigma(n)}$$
where $(-1)^{e(\sigma)}$ is the Koszul sign.

Suppose that $f:T^c(V)\longrightarrow  V$ is a linear map which extends to a coderivation $\boldsymbol{f}:T^c(V)\longrightarrow T^c(V)$ such that $\pi_1 \circ \boldsymbol{f}=f$, where $\pi_1:T^c(V)\longrightarrow V $ is projection.  Then the linear map $f\circ \chi:S^c(V)\longrightarrow V$ extends to the coderivation $\boldsymbol{f}\boldsymbol{\circ}\boldsymbol{\chi}:S^c(V)\longrightarrow S^c(V)$ and the following diagram commutes (Prop. 5, \cite{L})

$$\begin{tikzpicture}
\matrix (m) [matrix of math nodes, row sep=3em, column sep=2.5em, text height=1.5ex, text depth=0.25ex]
{ S^c(V) & & T^c(V) & & \\
S^c(V) & & T^c(V) & & V \\ };
\path[->] (m-1-1) edge node[above]{$\chi$} (m-1-3);
\path[->] (m-2-1) edge node[left]{$\boldsymbol{f}\boldsymbol{\circ}\boldsymbol{\chi}$} (m-1-1);
\path[->] (m-2-3) edge node[left]{$\boldsymbol{f}$} (m-1-3);
\path[->] (m-1-3) edge node[above right]{$\pi_1$} (m-2-5);
\path[->] (m-2-1) edge node[below]{$\chi$} (m-2-3);
\path[->] (m-2-3) edge node[below]{$f$} (m-2-5);
\end{tikzpicture}$$

The symmetrization of an $A_\infty$ algebra structure may  be described by the commutative diagram
\begin{equation} \label{EQDiagram} \begin{tikzpicture}[baseline=-\the\dimexpr\fontdimen22\textfont2\relax]
\matrix (m) [matrix of math nodes, row sep=3em, column sep=2.5em, text height=1.5ex, text depth=0.25ex]
{ S^c(V) & & T^c(V) & & \\
S^c(V) & & T^c(V) & & V \\ };
\path[->] (m-1-1) edge node[above]{$\chi$} (m-1-3);
\path[->] (m-2-1) edge node[left]{$\boldsymbol{l}$} (m-1-1);
\path[->] (m-2-3) edge node[left]{$\boldsymbol{m}$} (m-1-3);
\path[->] (m-1-3) edge node[above right]{$\pi_1$} (m-2-5);
\path[->] (m-2-1) edge node[below]{$\chi$} (m-2-3);
\path[->] (m-2-3) edge node[below]{$m$} (m-2-5);
\end{tikzpicture}\end{equation}
where $m=\sum m_n:T^c(V)\longrightarrow V$ is the collection of  the $A_\infty$ algebra structure maps, $\boldsymbol{m}$ is the lift of $m$ to a coderivation on $T^c(V)$ with $\boldsymbol{m}^2=0$, and the $L_\infty$ algebra structure  $\boldsymbol{l}$ is the lift of the map $m\circ\chi:S^c(V)\longrightarrow V$ to a coderivation on $S^c(V)$.

We now address the issue of symmetrization of strong homotopy derivations of $A_\infty$ algebras.

\begin{proposition}
Let $\theta = \{\theta_n\}$ denote the the collection of maps giving a strong homotopy derivation of degree $+k$ on the $A_\infty$ algebra $(V,m)$.  Regard $\theta$ as a map $T^c(V)\longrightarrow V$ and lift it to the coderivation $\boldsymbol{\theta}$ on $T^c(V)$.  Then the extension of the map $\theta \circ\chi:S^c(V)\longrightarrow V$ to the coderivation $\boldsymbol{\theta'}$ on $S^c(V)$ is a strong homotopy derivation of degree $+k$ on the $L_\infty$ algebra $V$ with algebra structure given by $m\circ\chi$.
\end{proposition}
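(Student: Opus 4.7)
The plan is to reduce everything to the coderivation level using Proposition \ref{PROPAinftyHomDerAsCoder} and Proposition \ref{PROPLinftyHomDer}, and then transport the identity across the injective coalgebra map $\chi$. Writing $\bm$ and $\btheta$ for the coderivations on $T^c(V)$ lifting $m$ and $\theta$, the hypothesis that $\theta$ is a strong homotopy derivation of $(V,m)$ is equivalent, via Proposition \ref{PROPAinftyHomDerAsCoder}, to the equation $[\bm,\btheta]=0$. The goal is to deduce $[\bl,\btheta']=0$ on $S^c(V)$ and then invoke Proposition \ref{PROPLinftyHomDer} to conclude.

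First I would apply the cited diagram lemma of \cite{L}, displayed in the excerpt, twice: once to $f=m$, yielding the intertwining $\bm\circ\chi=\chi\circ\bl$, and once to $f=\theta$, yielding $\btheta\circ\chi=\chi\circ\btheta'$. Observe that $|\bl|=1$ and $|\btheta'|=|\btheta|=k$, so the graded commutators on the two sides are
\[
[\bm,\btheta]=\bm\circ\btheta-(-1)^{k}\btheta\circ\bm, \qquad [\bl,\btheta']=\bl\circ\btheta'-(-1)^{k}\btheta'\circ\bl.
\]
Then I would compute
\begin{align*}
\chi\circ[\bl,\btheta']
&=\chi\circ\bl\circ\btheta'-(-1)^{k}\chi\circ\btheta'\circ\bl \\
&=\bm\circ\chi\circ\btheta'-(-1)^{k}\btheta\circ\chi\circ\bl \\
&=\bm\circ\btheta\circ\chi-(-1)^{k}\btheta\circ\bm\circ\chi \\
&=[\bm,\btheta]\circ\chi=0,
\end{align*}
using the two intertwining identities repeatedly. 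Since $\chi$ is injective, $[\bl,\btheta']=0$, and Proposition \ref{PROPLinftyHomDer} identifies this with the statement that the projections $\{\pi_1\,\btheta'|_{S^c(V)^{\otimes n}}\}$ form a degree $k$ strong homotopy derivation of the $L_\infty$ algebra $(V,\bl)$ obtained by symmetrization from $(V,\bm)$.

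The only point requiring care is verifying that the cited diagram lemma of \cite{L}, which is stated in loc. cit. for a particular degree, applies verbatim to the degree $k$ map $\theta$. This is essentially a consequence of the universal property of the cofree (graded) coalgebras $T^c(V)$ and $S^c(V)$ and is insensitive to the internal degree of $f$, so the lemma and its proof go through unchanged. The sign bookkeeping in the commutator is then automatic from $|\chi|=0$, and no genuine obstacle remains.
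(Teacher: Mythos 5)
Your proposal is correct and follows essentially the same route as the paper: both reduce the statement to the coderivation identity $[\boldsymbol{l},\boldsymbol{\theta'}]=0$, establish it by computing $\chi\circ[\boldsymbol{l},\boldsymbol{\theta'}]=[\boldsymbol{m},\boldsymbol{\theta}]\circ\chi=0$ via the intertwining relations $\chi\circ\boldsymbol{l}=\boldsymbol{m}\circ\chi$ and $\chi\circ\boldsymbol{\theta'}=\boldsymbol{\theta}\circ\chi$ from the diagram of \cite{L}, and conclude by injectivity of $\chi$. Your added remark that the lemma of \cite{L} is insensitive to the internal degree of $f$ is a useful point of care that the paper leaves implicit.
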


\begin{proof}
We claim that $[\boldsymbol{l},\boldsymbol{\theta'}]=0$.
We have the commutative diagram
$$\begin{tikzpicture}
\matrix (m) [matrix of math nodes, row sep=3em, column sep=2.5em, text height=1.5ex, text depth=0.25ex]
{ S^c(V) & & T^c(V) & & \\
S^c(V) & & T^c(V) & & V \\ };
\path[->] (m-1-1) edge node[above]{$\chi$} (m-1-3);
\path[->] (m-2-1) edge node[left]{$\boldsymbol{\theta'}$} (m-1-1);
\path[->] (m-2-3) edge node[left]{$\boldsymbol{\theta}$} (m-1-3);
\path[->] (m-1-3) edge node[above right]{$\pi_1$} (m-2-5);
\path[->] (m-2-1) edge node[below]{$\chi$} (m-2-3);
\path[->] (m-2-3) edge node[below]{$\theta$} (m-2-5);
\end{tikzpicture}$$
and we calculate
$$\chi[\boldsymbol{l},\boldsymbol{\theta'}]=\chi(\boldsymbol{l}\boldsymbol{\theta'}-(-1)^k\boldsymbol{\theta'}\boldsymbol{l})$$
$$=(\chi \boldsymbol{l})\boldsymbol{\theta'}-(-1)^k(\chi \boldsymbol{\theta'})\boldsymbol{l}$$
$$=\boldsymbol{m}(\chi\boldsymbol{\theta'})-(-1)^k\boldsymbol{\theta}(\chi\boldsymbol{l})$$
$$=\boldsymbol{m}\boldsymbol{\theta}\chi-(-1)^k\boldsymbol{\theta}\boldsymbol{m}\chi$$
$$=[\boldsymbol{m},\boldsymbol{\theta}]\chi=0$$
because $\chi\circ\boldsymbol{l}=\boldsymbol{m}\circ\chi$ from the commutative diagram \eqref{EQDiagram} and $[\boldsymbol{m},\boldsymbol{\theta}]=0$ because $\boldsymbol{\theta}$ is a strong homotopy derivation of an $A_\infty$ algebra. Because $\chi$ is injective, it follows that $[\boldsymbol{l},\boldsymbol{\theta'}]=0$.
\end{proof}

\bigskip
The next proposition provides us with a definition for the composition of strong homotopy derivations of $A_\infty$ algebras.
The proof follows immediately from the Jacobi identity.

\begin {proposition}
Let $\boldsymbol{\theta_1}$ and $\boldsymbol{\theta_2}$ be coderivations on $T^c(V)$
of degree $p$ and $q$ respectively.  Suppose that
$$[\boldsymbol{m},\boldsymbol{\theta}_1]=[\boldsymbol{m},\boldsymbol{\theta}_2]=0,$$
i.e. $\btheta_1$ and $\btheta_2$ correspond to strong homotopy derivations of $(V,m)$.  Then $[\btheta_1,\btheta_2]:=\btheta_1\circ\btheta_2-(-1)^{pq}\btheta_2\circ\btheta_1$ is a coderivation of degree $p+q$ with the property 
$$[\bm,[\btheta_1,\btheta_2]]=0,$$
so $[\btheta_1,\btheta_2]$ corresponds to a strong homotopy derivation of $(V,m)$.
\qed
\end{proposition}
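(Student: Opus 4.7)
The plan is to work entirely at the coalgebra level, using the standard fact that the graded commutator of two coderivations on $T^c(V)$ is again a coderivation and that endomorphisms of a graded vector space, together with the graded commutator, form a graded Lie algebra. First I would record that $\btheta_1\circ\btheta_2$ alone need not be a coderivation, but $[\btheta_1,\btheta_2] := \btheta_1\circ\btheta_2 - (-1)^{pq}\btheta_2\circ\btheta_1$ is: one checks this directly against the coproduct on $T^c(V)$, or invokes the general principle that $[\cdot,\cdot]$ on graded endomorphisms of a graded (co)algebra preserves the space of (co)derivations. Its degree is manifestly $p+q$.

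Having established that $[\btheta_1,\btheta_2]$ is a coderivation, the second and final step is to apply the graded Jacobi identity to the triple $(\bm,\btheta_1,\btheta_2)$ with degrees $(1,p,q)$, which gives
\begin{equation*}
[\bm,[\btheta_1,\btheta_2]] \;=\; [[\bm,\btheta_1],\btheta_2] + (-1)^{p}\,[\btheta_1,[\bm,\btheta_2]].
\end{equation*}
By hypothesis $[\bm,\btheta_1]=0=[\bm,\btheta_2]$, so both summands on the right vanish, whence $[\bm,[\btheta_1,\btheta_2]]=0$. By Proposition \ref{PROPAinftyHomDerAsCoder}, this is exactly the condition saying that the associated collection of multilinear maps is a strong homotopy derivation of $(V,m)$.

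The only point needing a little care is the sign in the Jacobi identity, which depends on the convention chosen for the graded commutator; with the convention fixed in the statement, the identity above is the correct one. Apart from this bookkeeping, no $A_\infty$-specific computation is required — the assertion is formal, valid in any graded Lie algebra. The analogous statement for $L_\infty$ algebras would follow by the identical argument after replacing $T^c(V)$ by $S^c(V)$ and invoking Proposition \ref{PROPLinftyHomDer} in place of Proposition \ref{PROPAinftyHomDerAsCoder}.
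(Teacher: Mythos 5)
Your proof is correct and is essentially identical to the paper's, which simply remarks that the statement ``follows immediately from the Jacobi identity'' applied in the graded Lie algebra of coderivations. The sign in your instance of the graded Jacobi identity is right for the commutator convention fixed in the statement, so nothing further is needed.
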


There is an obvious analogous proposition for strong homotopy derivations of $L_\infty$ algebras.
A further generalization to $\oP_\infty$ algebras for $\oP$ Koszul (and satisfying certain technical assumptions) follows from Proposition \ref{PROPHomDerAsCoder} in Appendix below.

%%%%%%%%%%%
%%%%%%%%%%%
%%%%%%%%%%%

\section{Appendix: Direct application of the Koszul theory} %\label{APPKoszul}

\subsection{The Koszul resolution}

In the first part of Appendix, we prove:

\begin{proposition} \label{PROPCompareoDRwithKoszulII}
Let $G$ be a $\Sigma$ module such that $\Fr{\downar G\op\downar\Phi}$ is of finite type\footnote{For $\Phi$, recall Definition \ref{DEFDkP}; for ``finite type'', see Definition \ref{DEFFT} below.}.
Let $\oP=\Fr{G}/(S)$ be a Koszul operad, where $S\subset\Fr[2]{G}$ are the generating relations.
Then the operad $\oDR{k}$ of Definition \ref{DEFoDR} is isomorphic to the Koszul resolution $\Omega(\Kdual{(\oDP{k})})$ of the operad $\oDP{k}$.
\end{proposition}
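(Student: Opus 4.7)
The plan is to invoke the uniqueness of minimal quasi-free resolutions of operads, as already sketched in Remark \ref{REMKoszulDiscuss}. Concretely, Proposition $3.7$ of \cite{DCV} asserts that a minimal quasi-free resolution is unique up to isomorphism, so it suffices to establish that both $\oDR{k}$ and $\Omega(\Kdual{(\oDP{k})})$ are minimal quasi-free resolutions of $\oDP{k}$.

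First I would verify that $\oDR{k}$ is a minimal resolution, taking $\oR=\Omega(\Kdual{\oP})$ as the Koszul resolution of $\oP$, which is minimal because $\oP$ is Koszul. That $\oDR{k}\to\oDP{k}$ is a resolution is already Theorem \ref{THMResOfDer}. Minimality amounts to checking that $\dd_{\oDR{k}}$ sends the generators $X\op\Phi\op\ul{X}$ into the subspace of $\Fr{X\op\Phi\op\ul{X}}$ spanned by trees with at least two vertices. For $x\in X$ this is minimality of $\oR$; for $\phi$ the differential vanishes; for $\ul{x}$ the two ``derivation'' terms $\phi\oo_1 x$ and $\sum_i x\oo_i\phi$ are already two-vertex compositions, and $s(\dd_\oR(x))$ lies in the desired subspace because the derivation $s$ preserves the number of vertices while $\dd_\oR(x)$ has weight $\geq 2$ by minimality of $\oR$.

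Next I would confirm that $\Omega(\Kdual{(\oDP{k})})$ is a minimal resolution of $\oDP{k}$. By Remark \ref{REMDistrib}, $\oDP{k}$ arises from a quadratic distributive law $\Lambda\colon\Fr{\Phi}\circ\oP\to\oP\circ\Fr{\Phi}$. Since $\oP$ is Koszul by hypothesis and $\Fr{\Phi}$ is Koszul (being free on a single arity-one generator, hence carrying no quadratic relations), the framework of Section $8.6$ of \cite{LV} applies and shows that $\oDP{k}$ is Koszul, provided $\Lambda$ satisfies the usual diamond-type compatibility; this can be checked directly from the explicit formula for $\Lambda$. The finite type hypothesis on $\Fr{\downar G\op\downar\Phi}$ is exactly what is needed to dualise aritywise and form the Koszul dual cooperad $\Kdual{(\oDP{k})}$, and its cobar construction is then automatically a minimal quasi-free resolution of $\oDP{k}$.

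With both operads identified as minimal quasi-free resolutions of $\oDP{k}$, Proposition $3.7$ of \cite{DCV} produces the claimed isomorphism. The main step I expect to require care is the verification of the distributive law hypothesis of Section $8.6$ of \cite{LV} in the present graded setting, where the signs in $\Lambda$ depend on $k$ and on the intrinsic degrees of elements of $G$; this is however a routine bookkeeping check, analogous to the sign verifications already carried out in the proof of Theorem \ref{THMResOfDer}.
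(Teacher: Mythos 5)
Your argument is correct in outline, but it is not the proof the paper gives for this Proposition --- it is essentially a fleshed-out version of Remark \ref{REMKoszulDiscuss}, which the paper explicitly sets aside at the start of the Appendix (``a much more general result has been already obtained in Remark \ref{REMKoszulDiscuss}; however, here we directly construct the Koszul resolution''). The paper's actual proof is computational: it dualizes the quadratic presentation of $\oDP{k}$ (this is where the finite-type hypothesis is genuinely used, via Lemma \ref{LEMCualizationOfOperad}), computes the orthogonal complement of the relations to identify ${\Kdual{(\oDP{k})}}^{\#}$ with ${\Kdual{\oP}}^{\#}\op\upar^{1-k}{\Kdual{\oP}}^{\#}$, dualizes back, writes out the cobar differential, and matches it term by term against \eqref{DiffOnoDR}, producing the explicit isomorphism $\upar\tilde{\phi}\mapsto(-1)^{1+k}\phi$. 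Your route buys brevity and in fact more generality --- the finite-type hypothesis becomes superfluous, since $\CoOp(\downar G\op\downar\Phi,\,\cdot\,)$ exists without any dualization, which your argument never performs (so your remark that finiteness ``is exactly what is needed to form the Koszul dual cooperad'' is slightly off); the paper's route buys the explicit isomorphism and the concrete description of $\Kdual{(\oDP{k})}$, of $\Delta_{(1)}$, and of the cobar differential, all of which are reused in the proof of Proposition \ref{PROPHomDerAsCoder}. Two caveats on your version, both inherited from Remark \ref{REMKoszulDiscuss}: before citing Theorem \ref{THMResOfDer} you must check its grading hypothesis for $X=\upar\ol{\Kdual{\oP}}$ (or appeal to the relaxation in the remark following it), and the uniqueness of minimal models from Proposition $3.7$ of \cite{DCV} is invoked by the paper itself only ``in many cases'', so its applicability to quasi-free operads with arity-one generators in arbitrary degree deserves at least a sentence. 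Your minimality check for $\dd_{\oDR{k}}$ (the two derivation terms are weight two, and $s$ preserves weight while $\dd_\oR(x)$ has weight at least two) is correct, as is the identification of the diamond condition for the distributive law as the step needing care.
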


Of course, a much more general result has been already obtained in Remark \ref{REMKoszulDiscuss}.
However, here we directly construct the Koszul resolution of $\oDP{k}$.
We hope that some readers find that exercise on Koszul duality interesting.

As usual, instead of computing the Koszul dual cooperad $\Kdual{(\oDP{k})}$ directly, we dualize it to obtain an easier to understand operad.
However, we need to work with operads including arity $1$ operations.
This brings some technical issues with dualizing quadratic data and forces us to assume the finiteness properties.
These assumptions can be avoided by computing $\Kdual{(\oDP{k})}$ directly, but we don't know how to do that.

\begin{remark}
Recall that $k$ is the degree of $\phi\in\Phi$.
The assumption in Proposition \ref{PROPCompareoDRwithKoszulII} that $\Fr{\downar G\op\downar\Phi}$ is of finite type is satisfied if $G$ is of finite type and one of the conditions below is met:
\begin{enumerate}
\item $G(0)=0$, $G(1)$ is concentrated in degree $\geq 2$ (resp. $\leq 0$) and $k\geq 2$ (resp. $k\leq 0$).
\item $G(0), G(1)$ are concentrated in degree $\geq 2$ (resp. $\leq 0$) and $G(\geq 2)$ is concentrated in degree $\geq 1$ (resp. $\leq 1$) and $k\geq 2$ (resp. $k\leq 0$).
\item $G(0)$ is concentrated in degree $\geq 1$ (resp. $\leq 1$), $G(1)$ is concentrated in degree $\geq 2$ (resp. $\leq 0$) and $G(\geq 2)$ is concentrated in degree $\geq 2$ (resp. $\leq 0$) and $k\geq 2$ (resp. $k\leq 0$).
\item $G(0)=\fld\{a\}$, $\dg{a}=0$, $G(2)=\fld\Sigma_2\{b\}$, $\dg{b}=3$, $G(n)=0$ for $n\neq 0,2$ and $k\geq 2$.
\end{enumerate}
These claims follow by an easy tree combinatorics.
The list is, of course, not exhaustive.
We don't know any assumptions which would admit $k=1$.
Curiously, this is the case considered in \cite{KS}.
\end{remark}

In this section, all $\Sigma$ modules and operads have zero differential unless explicitly stated otherwise.
As usual, we consider only conilpotent cooperads, thus the adjective will be omitted in the sequel.
Recall that this assumption implies that the underlying $\Sigma$ module of a cofree cooperad coincides with that of free operad.

We use freely the notation and results of \cite{MSS} and \cite{LV}.

We start by discussing linear duals of (co)operads in some detail.

\begin{definition} \label{DEFFT}
Recall that a $\Sigma$ module $E$ is said to be \emph{of finite type} iff for each $n\geq 0$ $E(n)$ is a dg vector space of finite type, i.e. every degree component $E(n)^d$, $d\in\Z$, is a finite dimensional vector space.

Let $\cSmod$ denote the category of $\Sigma$ modules and let $\cSmodft$ denote the full subcategory of all $\Sigma$ modules of finite type.
There is a subcategory $\Opft$ of $\Op$ of all operads having their underlying $\Sigma$ module in $\cSmodft$.
Similarly for cooperads, there is a full subcategory $\CoOpft$ of $\CoOp$.
\end{definition}

\begin{definition}
Let $E$ be a $\Sigma$ module and let $R\subset\Fr[2]{E}$ be a sub $\Sigma$ module.
Denote this inclusion $i$ and denote $\iota:\Fr{E}\to\Fr[2]{E}/R$ the obvious projection.

Denote $\Op(E,R)$ the operad and $p:\Fr{E}\to\Op(E,R)$ the morphism of operads satisfying $pi=0$ and having the following universal property:
In the diagram below, for every morphism $f$ of operads such that $fi=0$, there exists unique morphism $g$ of operads such that $gp=f$.
\begin{equation*} \begin{tikzpicture}[baseline=-\the\dimexpr\fontdimen22\textfont2\relax]
\matrix (m) [matrix of math nodes, row sep=3em, column sep=2.5em, text height=1.5ex, text depth=0.25ex]
{ R & & \Fr{E} & & \Op(E,R) \\
		& &        & & \oP \\ };
\path[->] (m-1-1) edge node[above]{$i$} (m-1-3);
\path[->] (m-1-3) edge node[above]{$p$} (m-1-5);
\path[->] (m-1-3) edge node[below left]{$\forall f$} (m-2-5);
\path[->,dashed] (m-1-5) edge node[right]{$\exists!g$} (m-2-5);
\end{tikzpicture}\end{equation*}
Denote $\Opft(E,R)$ the operad in $\Opft$ with the same universal property as $\Op(E,R)$, except we consider only all morphisms $f$ with target $\oP\in\Opft$.

Denote $\CoOp(E,R)$ the cooperad and $\pi:\CoOp(E,R)\to\Fr{E}$ the morphism of cooperads satisfying $\iota\pi=0$ and having the following universal property:
In the diagram below, for every morphism $\phi$ of cooperads such that $\iota\phi=0$, there exists unique morphism $\gamma$ of cooperads such that $\pi\gamma=\phi$.
\begin{equation} \label{DIACoOp} \begin{tikzpicture}[baseline=-\the\dimexpr\fontdimen22\textfont2\relax]
\matrix (m) [matrix of math nodes, row sep=3em, column sep=2.5em, text height=1.5ex, text depth=0.25ex]
{ \Fr[2]{E}/R & & \Fr{E} & & \CoOp(E,R) \\
							& &        & & \oC \\ };
\path[->] (m-1-5) edge node[above]{$\pi$} (m-1-3);
\path[->] (m-1-3) edge node[above]{$\iota$} (m-1-1);
\path[->] (m-2-5) edge node[below left]{$\forall\phi$} (m-1-3);
\path[->,dashed] (m-2-5) edge node[right]{$\exists!\gamma$} (m-1-5);
\end{tikzpicture}\end{equation}
Denote $\CoOpft(E,R)$ the cooperad in $\CoOpft$ with the same universal property as $\CoOp(E,R)$, except we consider only all morphisms $f$ with source $\oC\in\CoOpft$.
\end{definition}

\begin{remark} \label{REMUnivPropFinType}
Recall that $\Op(E,R)$ is the usual quotient operad $\Fr{E}/(R)$ of $\Fr{E}$ modulo the ideal $(R)$ generated by $R$.
Observe that if $\Op(E,R)\in\Opft$, then $\Opft(E,R)\cong\Op(E,R)$ by fullness of $\Opft$ in $\Op$.
For cooperads, $\CoOp(E,R)\in\CoOpft$ implies $\CoOpft(E,R)\cong\CoOp(E,R)$ similarly.
\end{remark}

\begin{remark} \label{REMDual}
Here are some well known or easy to prove facts about the linear dual:
\begin{enumerate}
\item The natural map $A^\#\ot B^\#\to(A\ot B)^\#$ is an iso if $A,B$ are of finite type.
\item If $\oC\in\CoOp$ with cocompositions $\Delta_i$, then $\oC^\#\in\Op$ (without any finiteness assumptions) via $\oo_i:\oC(m)^\#\ot\oC(n)^\#\to(\oC(m)\ot\oC(n))^\#\To{\Delta_i^\#}\oC(m+n-1)^\#$.
\item If $\oP\in\CoOpft$, then $\oP^\#\in\CoOpft$.
\item If the free operad (resp. cooperad) $\Fr{E}$ is in $\cSmodft$, then $E\in\cSmodft$ and $\Fr{E}^\#\cong\Fr{E^\#}$ as cooperads (resp. operads).
\item If $\Fr{E}\in\cSmodft$, then $\Op(E,R)\in\Opft$ and $\CoOp(E,R)\in\CoOpft$.
\item If $\oP\in\Opft$ (resp. $\CoOpft$), then there is a natural iso $\oP^{\#\#}\cong\oP$ of operads (resp. cooperads).
\end{enumerate}
\end{remark}

The following lemma is a consequence of an explicit description of $\Fr{E}$:

\begin{lemma} \label{LEMFrTwo}
For every $\sigma(x\oo_{i}y)\in\Fr[2]{E}(n)$ with $x\in E(a), y\in E(b),\ \sigma\in\Sigma_{a+b-1}$, there are unique $\sigma'\in\USh{b,a-1},\ \alpha\in\Sigma_a$ and $\beta\in\Sigma_b$ such that $\sigma(x\oo_{i}y)=\sigma'(\alpha x\oo_{1} \beta y)$.
\qed
\end{lemma}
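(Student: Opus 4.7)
The plan is to use the explicit description of $\Fr[2]{E}(n)$ as the $\Sigma$ module spanned by decorated trees with two vertices and $n$ labeled leaves, together with the standard equivariance axioms for operadic composition in a free operad. Both parameterizations $(\sigma, i)$ and $(\sigma', \alpha, \beta)$ range over sets of size $a\cdot n!$, so the lemma amounts to exhibiting a bijection between them, which I would establish by separately proving existence and uniqueness.

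For existence, I would proceed in three steps. First, move the attachment slot from $i$ to $1$: choose the distinguished $\alpha_0 \in \Sigma_a$ that preserves the relative order on $\{2,\ldots,a\}$ and satisfies $\alpha_0^{-1}(1)=i$; by the equivariance axiom for the action on the first factor of $\oo_1$, one has $(\alpha_0 x)\oo_1 y = \rho\cdot(x\oo_i y)$ for an explicit block-cyclic $\rho\in\Sigma_n$ shifting the block of $y$-inputs from positions $i,\ldots,i+b-1$ to $1,\ldots,b$. Hence $\sigma(x\oo_i y)=(\sigma\rho^{-1})(\alpha_0 x\oo_1 y)$. Second, factor $\sigma\rho^{-1}$ uniquely as $\sigma'\cdot(\beta\oplus\gamma)$ with $\sigma'\in\USh{b,a-1}$, $\beta\in\Sigma_b$ acting on the first $b$ positions and $\gamma\in\Sigma_{a-1}$ acting on the last $a-1$; this is the standard shuffle decomposition in $\Sigma_n$. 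Third, absorb the block permutations back into the vertices: equivariance gives $(\beta\oplus\gamma)\cdot(\alpha_0 x\oo_1 y)=(\tilde\gamma\alpha_0)x\oo_1\beta y$, where $\tilde\gamma\in\Sigma_a$ fixes $1$ and acts as $\gamma$ on $\{2,\ldots,a\}$. Setting $\alpha:=\tilde\gamma\alpha_0$ (still with $\alpha^{-1}(1)=i$) yields $\sigma(x\oo_i y)=\sigma'(\alpha x\oo_1\beta y)$.

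For uniqueness, I would argue that the triple $(\sigma',\alpha,\beta)$ can be recovered from the decorated labeled tree corresponding to $\sigma'(\alpha x\oo_1\beta y)$: the subset $\sigma'(\{1,\ldots,b\})\subseteq\{1,\ldots,n\}$ of leaf-labels lying above $y$ determines the unshuffle $\sigma'$; the internal order in which these labels appear as inputs of $y$ determines $\beta$; and the attachment slot $i=\alpha^{-1}(1)$ together with the relative order of the $a-1$ remaining leaves under $x$ determines $\alpha$. Alternatively, the count $|\USh{b,a-1}|\cdot a!\cdot b! = \binom{a+b-1}{b}\cdot a!\cdot b! = a\cdot(a+b-1)! = a\cdot n!$ matches the number of pairs $(\sigma,i)$, so the surjection built in the existence step is forced to be a bijection.

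The main obstacle is the bookkeeping of the equivariance axioms: one must verify that the lift $\tilde\gamma$ really fixes the slot to which $y$ is attached (so that it commutes past the composition without altering $\alpha_0$'s value on $1$), and that $\beta$ is absorbed into $y$ as a whole without disturbing $\alpha x$. Fixing a consistent left/right action convention and a convention for how a leaf-level permutation decomposes into vertex-level permutations up front makes every step a direct application of the free operad axioms; after that the argument is essentially combinatorial.
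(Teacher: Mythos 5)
Your proof is correct and is exactly the argument the paper leaves implicit: the lemma is stated with no proof beyond the remark that it is ``a consequence of an explicit description of $\Fr{E}$'', which amounts to the unshuffle coset decomposition $\Sigma_n=\USh{b,a-1}\cdot(\Sigma_b\times\Sigma_{a-1})$ combined with the equivariance axioms of the free operad, precisely the steps you carry out. Both your tree-reading uniqueness argument and the counting check $|\USh{b,a-1}|\cdot a!\cdot b!=a\cdot n!$ are sound (understood, as the lemma intends, at the level of labeled two-vertex trees with formal decorations $x,y$).
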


\begin{lemma} \label{LEMCualizationOfOperad}
If $E$ is a $\Sigma$ module such that $\Fr{E}\in\cSmodft$ and if $R\subset\Fr[2]{E}$ is a sub $\Sigma$ module, then
$$\Op(E,R)^\# \cong \CoOp(E^\#,R^{\perp}) \quad\textrm{and}\quad \CoOp(E,R)^\# \cong \Op(E^\#,R^{\perp})$$
as (co)operads.
The notation $R^\perp$ is explained in the proof below.
\end{lemma}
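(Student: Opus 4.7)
The approach is to use the universal properties defining $\Op(E,R)$ and $\CoOp(E^\#,R^\perp)$, dualizing between operads and cooperads. First I will define $R^\perp\subset\Fr[2]{E^\#}$ as the annihilator of $R\subset\Fr[2]{E}$ under the canonical pairing; this is well defined because $\Fr{E}\in\cSmodft$ (so $\Fr[2]{E}^\#\cong\Fr[2]{E^\#}$ canonically by Remark \ref{REMDual}(4)). Both claimed isomorphisms will then reduce, up to a double-dualization trick, to one universal-property manipulation.

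For the first iso $\Op(E,R)^\#\cong\CoOp(E^\#,R^\perp)$, note that $\Op(E,R)\in\Opft$ (hence $\Op(E,R)^\#\in\CoOpft$) by Remark \ref{REMDual}(5). The plan is to check that $\Op(E,R)^\#$, equipped with the linear dual $p^\#\colon\Op(E,R)^\#\hookrightarrow\Fr{E}^\#\cong\Fr{E^\#}$ of the canonical projection $p\colon\Fr{E}\onto\Op(E,R)$, enjoys the universal property of $\CoOpft(E^\#,R^\perp)$ as in diagram \eqref{DIACoOp}. Given any morphism $\phi\colon\oC\to\Fr{E^\#}$ of cooperads with $\oC\in\CoOpft$ and $\iota\phi=0$, I dualize to obtain an operad morphism $\phi^\#\colon\Fr{E}\to\oC^\#$. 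The condition $\iota\phi=0$ says the image of $\phi$ in $\Fr[2]{E^\#}$ lies in $R^\perp$; since the pairing is weight-preserving and $R^\perp$ annihilates $R$, this translates under duality into $\phi^\#|_R=0$. The usual universal property of the quotient operad $\Op(E,R)=\Fr{E}/(R)$ then produces a unique operad morphism $g\colon\Op(E,R)\to\oC^\#$, whose linear dual $g^\#$, composed with the canonical iso $\oC^{\#\#}\cong\oC$ (valid since $\oC\in\CoOpft$), supplies the required unique $\gamma\colon\oC\to\Op(E,R)^\#$. This shows $\Op(E,R)^\#\cong\CoOpft(E^\#,R^\perp)$, and then Remark \ref{REMUnivPropFinType} upgrades this to $\CoOp(E^\#,R^\perp)$ because $\CoOp(E^\#,R^\perp)\in\CoOpft$ by Remark \ref{REMDual}(5) applied to $\Fr{E^\#}\in\cSmodft$.

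For the second iso $\CoOp(E,R)^\#\cong\Op(E^\#,R^\perp)$, I reuse the first iso with $(E,R)$ replaced by $(E^\#,R^\perp)$. Since $\Fr{E^\#}\in\cSmodft$, we obtain $\Op(E^\#,R^\perp)^\#\cong\CoOp(E^{\#\#},R^{\perp\perp})\cong\CoOp(E,R)$, where the second iso uses the finite-type double-dual identifications $E^{\#\#}\cong E$ and $R^{\perp\perp}\cong R$. Taking one more linear dual and invoking $\Op(E^\#,R^\perp)^{\#\#}\cong\Op(E^\#,R^\perp)$, which holds by Remark \ref{REMDual}(6) since $\Op(E^\#,R^\perp)\in\Opft$, yields the claim.

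The main delicate step is translating the condition $\iota\phi=0$ into $\phi^\#|_R=0$; this rests on the observation that the operadic ideal $(R)\subset\Fr{E}$ satisfies $(R)\cap\Fr[2]{E}=R$ (because operadic composition strictly raises weight whenever combining at least two vertices), so that the weight-$2$ component of the annihilator of $(R)$ inside $\Fr{E^\#}$ is exactly $R^\perp$. Once this is clear, everything else is a routine application of the universal properties, and the fact that the map $\gamma$ is a morphism of cooperads (rather than merely of $\Sigma$ modules) is automatic, being the linear dual of the operad morphism $g$.
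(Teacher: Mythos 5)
Your proof is correct and follows essentially the same route as the paper: dualize the defining universal-property diagram, transport the orthogonality condition across the pairing $\Fr[2]{E^\#}\ot\Fr[2]{E}\to\fld$, and undo the dualization via the natural isomorphism $(-)^{\#\#}\cong\mathrm{id}$ on finite-type objects, invoking Remark \ref{REMUnivPropFinType}. The only cosmetic difference is that the paper proves the statement $\CoOp(E,R)^\#\cong\Op(E^\#,R^\perp)$ in detail and declares the other analogous, whereas you prove $\Op(E,R)^\#\cong\CoOp(E^\#,R^\perp)$ directly and deduce the second from it by a double-dual argument using $R^{\perp\perp}=R$, which is indeed valid here because each graded component of $\Fr[2]{E}$ is finite dimensional.
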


\begin{proof}
We prove the second statement, the first one is analogous.

Dualize the first row of the diagram \eqref{DIACoOp}:
\begin{equation} \begin{tikzpicture}[baseline=-\the\dimexpr\fontdimen22\textfont2\relax]
\matrix (m) [matrix of math nodes, row sep=3em, column sep=2.5em, text height=1.5ex, text depth=0.25ex]
{ (\Fr[2]{E}/R)^\# & & \Fr{E}^\# & & \CoOp(E,R)^\# \\ };
\path[->] (m-1-3) edge node[above]{$\pi^\#$} (m-1-5);
\path[->] (m-1-1) edge node[above]{$\iota^\#$} (m-1-3);
\end{tikzpicture}\end{equation}
We identify $\Fr{E}^\#\cong\Fr{E^\#}$ by Remark \ref{REMDual}.
In particular, $\Fr[2]{E}^\#\cong\Fr[2]{E^\#}$.
Under this iso, the usual pairing $\Fr[2]{E}^\#\ot\Fr[2]{E}\to\fld$ becomes a pairing $\Psi:\Fr[2]{E^\#}\ot\Fr[2]{E} \to \fld$ described as follows:
For $\alpha\in E(a)^\#,\alpha'\in E(b)^\#,\ e\in E(c),e'\in E(d)$ and $\sigma\in\USh{b,a-1}, \tau\in\USh{d,c-1}$,
\begin{gather} \label{EQPsi}
\Psi\left(\sigma(\alpha\oo_1\alpha')\ot\tau(e\oo_1 e')\right):=
\left\{ \begin{array}{ll}
(-1)^{\dg{\alpha'}\dg{e}}\alpha(e)\cdot\alpha'(e') & \quad \textrm{if } a=c, b=d, \sigma=\tau \\
0																									 & \quad \textrm{else}
\end{array} \right.
\end{gather}
By Lemma \ref{LEMFrTwo}, this determines $\Psi$ uniquely.
We define $R^{\perp}:=\{x\in\Fr[2]{E^\#}\ |\ \forall r\in R \quad \Psi(x\ot r)=0\}$ as usual and then there is the obvious iso $(\Fr[2]{E}/R)^\#\cong R^{\perp}$.
Thus $\iota^\#$ becomes the inclusion $R^\perp\into\Fr{E^\#}$.
Below, we will prove that $\CoOp(E,R)^\#$ has the universal property of $\Opft(E^\#,R^{\perp})$.
By uniqueness and Remarks \ref{REMUnivPropFinType} and \ref{REMDual} ($5.$), the conclusion will follow.

Let $\oP\in\Opft$ and let $f:\Fr{E^\#}\to\oP$ be a morphism of operads such that $fi=0$.
We need to find a unique $g:\CoOp(E,R)^\#\to\oP$ such that the right square commutes:
\begin{equation*} \begin{tikzpicture}[baseline=-\the\dimexpr\fontdimen22\textfont2\relax]
\matrix (m) [matrix of math nodes, row sep=3em, column sep=2.5em, text height=1.5ex, text depth=0.25ex]
{ (\Fr[2]{E}/R)^\# & & \Fr{E}^\# & & \CoOp(E,R)^\# \\
	R^\perp					 & & \Fr{E^\#} & & \oP \\ };
\path[->] (m-1-3) edge node[above]{$\pi^\#$} (m-1-5);
\path[->] (m-1-1) edge node[above]{$\iota^\#$} (m-1-3);
\path[->] (m-2-1) edge node[above]{$i$} (m-2-3);
\path[->] (m-2-3) edge node[above]{$f$} (m-2-5);
\path[-] (m-1-1) edge node[right]{$\cong$} (m-2-1);
\path[-] (m-1-3) edge node[right]{$\cong$} (m-2-3);
\path[->,dashed] (m-1-5) edge node[right]{$g$} (m-2-5);
\end{tikzpicture}\end{equation*}
We dualize this diagram.
By our finiteness assumptions, all operads become cooperads.
By universality of $\CoOp(E,R)$, there is unique $g_1$ such that the diagram below commutes:
\begin{equation*} \begin{tikzpicture}[baseline=-\the\dimexpr\fontdimen22\textfont2\relax]
\matrix (m) [matrix of math nodes, row sep=3em, column sep=2.5em, text height=1.5ex, text depth=0.25ex]
{ (\Fr[2]{E}/R) 			 & & \Fr{E} 			 & & \CoOp(E,R) \\
	(\Fr[2]{E}/R)^{\#\#} & & \Fr{E}^{\#\#} & & \CoOp(E,R)^{\#\#} \\
	(R^\perp)^\#				 & & \Fr{E^\#}^\#  & & \oP^\# \\ };
\path[<-] (m-1-1) edge node[above]{$\iota$} (m-1-3);
\path[<-] (m-1-3) edge node[above]{$\pi$} (m-1-5);
\path[<-] (m-2-3) edge node[above]{$\pi^{\#\#}$} (m-2-5);
\path[<-] (m-2-1) edge node[above]{$\iota^{\#\#}$} (m-2-3);
\path[<-] (m-3-1) edge node[above]{$i^\#$} (m-3-3);
\path[<-] (m-3-3) edge node[above]{$f^\#$} (m-3-5);
\path[-] (m-1-1) edge node[right]{$\cong$} (m-2-1);
\path[-] (m-1-3) edge node[right]{$\cong$} (m-2-3);
\path[-] (m-1-5) edge node[right]{$\cong$} (m-2-5);
\path[-] (m-2-1) edge node[right]{$\cong$} (m-3-1);
\path[-] (m-2-3) edge node[right]{$\cong$} (m-3-3);
\path[->] (m-3-5) edge [bend right=70] node[right]{$g_1$} (m-1-5);
\end{tikzpicture}\end{equation*}
Now we dualize once more and define the morphism $g$ to be the composite 
$$\CoOp(E,R)^\#\To{g_1^\#}\oP^{\#\#}\cong\oP.$$
It is then an easy application of naturality to prove $g$ has the required properties.
\end{proof}

Let $\oQ$ be a Koszul operad with presentation $\oQ=\Fr{E}/(R)$ for some $R\subset\Fr[2]{E}$.
Recall that its Koszul dual cooperad has presentation $\Kdual{\oQ}:=\CoOp(\downar E,\downar^{\ot 2}R)$, where $\downar^{\ot 2}R$ is the image of $R$ under the map $\Fr[2]{E}\to\Fr[2]{\downar E}$ sending $\sigma(e\oo_i e')$ to $(-1)^{\dg{e}}\sigma(\downar e\oo_i\downar e')$.
Denote $\ol{\Kdual{\oQ}}$ the coaugmentation coideal.
Recall that Koszul resolution of $\oQ$ is the cobar construction on $\Kdual{\oQ}$:
$$\Omega(\Kdual{\oQ})=\Fr{\upar\overline{\Kdual{\oQ}}}$$
with differential $\dd$ being the operadic derivation given on generators by the composite
\begin{equation*} \begin{tikzpicture}[baseline=-\the\dimexpr\fontdimen22\textfont2\relax]
\matrix (m) [matrix of math nodes, row sep=3em, column sep=1.5em, text height=1.5ex, text depth=0.25ex]
{ \upar\overline{\Kdual{\oQ}} & & \overline{\Kdual{\oQ}} & & \Fr[2]{\overline{\Kdual{\oQ}}} & & \Fr[2]{\upar\overline{\Kdual{\oQ}}} \\ };
\path[->] (m-1-1) edge node[above]{$\downar$} (m-1-3);
\path[->] (m-1-3) edge node[above]{$\overline{\Delta}_{(1)}$} (m-1-5);
\path[->] (m-1-5) edge node[above]{$\upar\ot\upar$} (m-1-7);
\end{tikzpicture}\end{equation*}
where $\ol{\Delta}_{(1)}$ is related to the infinitesimal coproduct $\Delta_{(1)}:\Kdual{\oQ} \To{\Delta} \Kdual{\oQ}\oo\Kdual{\oQ} \To{\textrm{proj}} \Fr[2]{\Kdual{\oQ}}$ by the formula $\ol{\Delta}_{(1)}(x)=\Delta_{(1)}-\id\oo_1 x-\sum_i x\oo_i\id$.
Rather than computing $\Kdual{\oQ}$ directly, we find it easier to dualize it and use Lemma \ref{LEMCualizationOfOperad}.
Then we dualize again to obtain a cooperad isomorphic to $\Kdual{\oQ}$.
These dualizations force us to put the finiteness assumptions on $\oQ$.

\begin{proof}[Proof of Proposition \ref{PROPCompareoDRwithKoszulII}]
$$\oDP{k}=\Fr{G\oplus\Phi}/(S\oplus D),$$
where $D=\{\phi\oo_1 g-(-1)^{k\dg{g}}\sum_i g\oo_i\phi\ |\ g\in G\}$.
We make the Koszul resolution of $\oDP{k}$ explicit.
By definition, 
$$\Kdual{(\oDP{k})}=\CoOp(\downar G\oplus\downar\Phi,\downar^{\ot 2}S\oplus\downar^{\ot 2}D).$$
By assumption, $\Fr{\downar G\oplus\downar\Phi}\in\cSmodft$.
Thus Lemma \ref{LEMCualizationOfOperad} implies
$${\Kdual{(\oDP{k})}}^\# \cong \Op((\downar G)^\#\oplus(\downar\Phi)^\#, (\downar^{\ot 2}S\oplus\downar^{\ot 2}D)^\perp).$$
We compute the orthogonal complement $(\downar^{\ot 2}S\oplus\downar^{\ot 2}D)^\perp$ with respect to $\Psi$ defined at \eqref{EQPsi}:
Denote 
$$(\downar^{\ot 2}S)^{\perp_{G}}:=\{\ol{x}\in\Fr[2]{(\downar G)^\#}\ |\ \forall s\in\downar^{\ot 2}S \quad \Psi(\ol{x},s)=0\}.$$
Since every $\ol{x}\in\Fr[2]{(\downar G)^\#}\subset\Fr[2]{(\downar G)^\#\oplus(\downar\Phi)^\#}$ is trivially orthogonal to $\downar^{\ot 2}D$, we obtain $(\downar^{\ot 2}S)^{\perp_{G}} \subset (\downar^{\ot 2}S\oplus\downar^{\ot 2}D)^\perp$.
Similarly, elements of the form $\ol{\phi}\oo_1\ol{\phi}$, $\ol{\phi}\oo_1\ol{g}$, $\ol{g}\oo_i\ol{\phi}$ are trivially orthogonal to $\downar^{\ot 2}S$, where $\ol{g}\in(\downar G)^\#$ and $\ol{\phi}\in(\downar\Phi)^\#$ is a fixed nonzero element of the $1$-dimensional space.
We immediately see that $\ol{\phi}\oo_1\ol{\phi}$ is also orthogonal to $\downar^{\ot 2}D$.
For the remaining cases, we compute:
\begin{align*}
\Psi(\ol{\phi}\oo_1\ol{g} \ot \downar\phi\oo_1\downar g) &= (-1)^{(1+\dg{g})(1+k)} \ol{\phi}(\downar\phi) \cdot \ol{g}(\downar g), \\
\Psi(\ol{g}\oo_i\ol{\phi} \ot \downar g\oo_j\downar\phi) &= \Psi(\sigma_i(\sigma_i^{-1}\ol{g}\oo_1\ol{\phi}) \ot \sigma_j(\sigma_j^{-1}\downar g\oo_1\downar\phi)) = \\
&= \delta_{ij} (-1)^{(1+k)(1+\dg{g})} (\sigma_i^{-1}\ol{g})(\sigma_j^{-1}\downar g) \cdot \ol{\phi}(\downar\phi) = \\
&= \delta_{ij} (-1)^{(1+k)(1+\dg{g})} \ol{g}(\downar g) \cdot \ol{\phi}(\downar\phi)
\end{align*}
where $\sigma_l\in\USh{1,n-1}$ is the unique unshuffle such that $\sigma_l(1)=l$ and $\delta_{ij}$ is the Kronecker delta.

\begin{equation*}
\begin{matrix}
\begin{tikzpicture}[baseline=-\the\dimexpr\fontdimen22\textfont2\relax,scale=0.5]
\draw (0,-2)--(0,2);
\draw (-2,-2)--(-2,0)--(0,1);
\draw (-1,-2)--(-1,0)--(0,1);
\draw (1,-2)--(1,0)--(0,1);
\draw (2,-2)--(2,0)--(0,1);
\node at (-1.5,-2) {$\scriptstyle{\cdots}$};
\node at (1.5,-2) {$\scriptstyle{\cdots}$};
\filldraw (0,1) circle (3pt);
\node at (0,1) [above right]{$\ol{g}$};
\filldraw (0,-1) circle (3pt);
\node at (0,-1) [right]{$\ol{\phi}$};
\end{tikzpicture} 
& = &
\begin{tikzpicture}[baseline=-\the\dimexpr\fontdimen22\textfont2\relax,scale=0.5]
\draw (-2,-3)--(-1,-2)--(-1,0)--(-2,1)--(0,2)--(0,3);
\draw (-1,-3)--(0,-2)--(0,0)--(-1,1)--(0,2);
\draw (0,-3)--(-2,-2)--(-2,0)--(0,1)--(0,2);
\draw (1,-3)--(1,1)--(0,2);
\draw (2,-3)--(2,1)--(0,2);
\node at (-1.5,-3) {$\scriptstyle{\cdots}$};
\node at (1.5,-3) {$\scriptstyle{\cdots}$};
\filldraw (0,2) circle (3pt);
\node at (0,2) [above right]{$\ol{g}$};
\filldraw (-2,-1) circle (3pt);
\node at (-2,-1) [left]{$\ol{\phi}$};
\node at (2.8,.5) {$\sigma_i^{-1}$};
\node at (2.5,-2.5) {$\sigma_i$};
\end{tikzpicture}
& = &
\begin{tikzpicture}[baseline=-\the\dimexpr\fontdimen22\textfont2\relax,scale=0.5]
\draw (-2,-2.5)--(-1,-1.5)--(-1,.5)--(0,1.5)--(0,2.5);
\draw (-1,-2.5)--(0,-1.5)--(0,.5)--(0,1.5);
\draw (0,-2.5)--(-2,-1.5)--(-2,.5)--(0,1.5);
\draw (1,-2.5)--(1,.5)--(0,1.5);
\draw (2,-2.5)--(2,.5)--(0,1.5);
\node at (-1.5,-2.5) {$\scriptstyle{\cdots}$};
\node at (1.5,-2.5) {$\scriptstyle{\cdots}$};
\filldraw (0,1.5) circle (3pt);
\node at (0,1.5) [above right]{$\sigma_i^{-1}\ol{g}$};
\filldraw (-2,-.5) circle (3pt);
\node at (-2,-.5) [left]{$\ol{\phi}$};
\node at (2.5,-2) {$\sigma_i$};
\end{tikzpicture} \\
\ol{g}\oo_i\ol{\phi} & & = & & \sigma_i(\sigma_i^{-1}\ol{g}\oo_1\ol{\phi})
\end{matrix}
\end{equation*}
The right-to-left inclusion below follows by an easy calculation:
$$(\downar^{\ot 2}S\oplus\downar^{\ot 2}D)^\perp = (\downar^{\ot 2}S)^{\perp_{G}} \oplus \fld\{\ol{\phi}\oo_1\ol{\phi}\} \oplus \fld\!\left\{ \ol{\phi}\oo_1\ol{g}-(-1)^{(1+k)(1+\dg{g})}\ol{g}\oo_i\ol{\phi} \ \big|\ \ol{g}\in(\downar G)^\#,\ i \right\}$$
and the other inclusion is straightforward.

Now the operad ${\Kdual{(\oDP{k})}}^\#$ is easy to understand:
Intuitively, we can vertically exchange $\ol{\phi}$ with $\ol{g}\in(\downar G)^\#$ in a tree (with levels) encoding iterated $\oo_i$ compositions of operations of ${\Kdual{(\oDP{k})}}^\#$:
\begin{equation*} 
\begin{tikzpicture}[baseline=-\the\dimexpr\fontdimen22\textfont2\relax,scale=0.5]
\draw (-1.5,1)--(-.5,2)--(-.5,3);
\draw (-.5,1)--(-.5,2);
\draw (-.5,-3)--(.5,-2)--(.5,1)--(-.5,2);
\draw (.5,-3)--(.5,-2);
\draw (1.5,-3)--(.5,-2);
\node at (-.5,2) [right]{$\ol{g_1}$};
\filldraw (-.5,2) circle (3pt);
\node at (.5,0) [right]{$\ol{\phi}$};
\filldraw (.5,0) circle (3pt);
\node at (.5,-2) [right]{$\ol{g_2}$};
\filldraw (.5,-2) circle (3pt);
\end{tikzpicture} 
= (-1)^{(1+k)(1+\dg{g_2})}
\begin{tikzpicture}[baseline=-\the\dimexpr\fontdimen22\textfont2\relax,scale=0.5]
\draw (-1.5,1)--(-.5,2)--(-.5,3);
\draw (-.5,1)--(-.5,2);
\draw (-.5,-3)--(-.5,-1)--(.5,0)--(.5,1)--(-.5,2);
\draw (.5,-1)--(.5,0);
\draw (1.5,-1)--(.5,0);
\node at (-.5,2) [right]{$\ol{g_1}$};
\filldraw (-.5,2) circle (3pt);
\node at (-.5,-2) [right]{$\ol{\phi}$};
\filldraw (-.5,-2) circle (3pt);
\node at (.5,0) [right]{$\ol{g_2}$};
\filldraw (.5,0) circle (3pt);
\end{tikzpicture}
= (-1)^{(1+k)(1+\dg{g_1})}
\begin{tikzpicture}[baseline=-\the\dimexpr\fontdimen22\textfont2\relax,scale=0.5]
\draw (-1.5,-1)--(-.5,0)--(-.5,1)--(-.5,3);
\draw (-.5,-1)--(-.5,0);
\draw (-.5,-3)--(.5,-2)--(.5,-1)--(-.5,0);
\draw (.5,-3)--(.5,-2);
\draw (1.5,-3)--(.5,-2);
\node at (-.5,0) [right]{$\ol{g_1}$};
\filldraw (-.5,0) circle (3pt);
\node at (-.5,2) [right]{$\ol{\phi}$};
\filldraw (-.5,2) circle (3pt);
\node at (.5,-2) [right]{$\ol{g_2}$};
\filldraw (.5,-2) circle (3pt);
\end{tikzpicture}
=\cdots
\end{equation*}
Also, if two $\ol{\phi}$'s appear one above the other, then the whole composition vanishes:
\begin{equation*} 
\begin{tikzpicture}[baseline=-\the\dimexpr\fontdimen22\textfont2\relax,scale=0.5]
\draw (-1,-1)--(0,0)--(0,3);
\draw (0,-1)--(0,0);
\draw (1,-3)--(1,-1)--(0,0);
\node at (0,2) [right]{$\ol{\phi}$};
\filldraw (0,2) circle (3pt);
\node at (0,0) [right]{$\ol{g}$};
\filldraw (0,0) circle (3pt);
\node at (1,-2) [right]{$\ol{\phi}$};
\filldraw (1,-2) circle (3pt);
\end{tikzpicture} 
= (-1)^{(1+k)(1+\dg{g})}
\begin{tikzpicture}[baseline=-\the\dimexpr\fontdimen22\textfont2\relax,scale=0.5]
\draw (-1,-3)--(0,-2)--(0,3);
\draw (0,-3)--(0,-2);
\draw (1,-3)--(0,-2);
\node at (0,0) [right]{$\ol{\phi}$};
\filldraw (0,0) circle (3pt);
\node at (0,-2) [right]{$\ol{g}$};
\filldraw (0,-2) circle (3pt);
\node at (0,2) [right]{$\ol{\phi}$};
\filldraw (0,2) circle (3pt);
\end{tikzpicture} 
=
0.
\end{equation*}
Hence only trees with at most one occurrence of $\ol{\phi}$ remains and we assume that $\ol{\phi}$ is always at the root.
The operations in $(\downar G)^\#$ are subject to the same relations as in ${\Kdual{\oP}}^\#$.

Thus
\begin{gather} \label{EQIdentif}
{\Kdual{(\oDP{k})}}^\# \cong {\Kdual{\oP}}^\# \oplus \upar^{1-k}{\Kdual{\oP}}^\# \quad \textrm{as }\Sigma\textrm{ modules.}
\end{gather}
The elements of the second summand are of the form $\ol{\phi}\oo_1\ol{p}$ for some $\ol{p}\in{\Kdual{\oP}}^\#$ and we denote them just by $\ol{\phi}\ol{p}$.
The composition involving the $\upar^{1-k}{\Kdual{\oP}}^\#$ summand is as follows:
\begin{align*}
\ol{\phi}\ol{p}_1 \oo_i \ol{\phi}\ol{p}_2 &= 0, \\
\ol{\phi}\ol{p}_1 \oo_i \ol{p}_2 &= \ol{\phi}(\ol{p}_1\oo_i\ol{p}_2), \\
\ol{p}_1 \oo_i \ol{\phi}\ol{p}_2 &= (-1)^{(1+k)\dg{\ol{p}_1}} \ol{\phi}(\ol{p}_1\oo_i\ol{p}_2),
\end{align*}
where $\oo_i$ on the RHS means composition in ${\Kdual{\oP}}^\#$.

Now we consider ${\Kdual{(\oDP{k})}}^{\#\#}$, which is isomorphic to $\Kdual{(\oDP{k})}$.
As in \eqref{EQIdentif}, its elements are $\tilde{p}\in\Kdual{\oP}$ and $\tilde{\phi}\tilde{p}\in\upar^{k-1}\Kdual{\oP}$.
The partial cocomposition $\Delta_{(1)}$ is dual to the partial composition $\gamma_{(1)}:\Fr[2]{{\Kdual{(\oDP{k})}}^\#} \into {\Kdual{(\oDP{k})}}^\# \oo {\Kdual{(\oDP{k})}}^\# \To{\gamma} {\Kdual{(\oDP{k})}}^\#$, which maps $\sigma(\ol{x}\oo_i\ol{y})$ in $\Fr[2]{{\Kdual{(\oDP{k})}}^\#}$ to $\sigma(\ol{x}\oo_i\ol{y})$ in ${\Kdual{(\oDP{k})}}^\#$:
We denote
\begin{gather} \label{EQInfCoprodP}
\Delta_{(1)}(\tilde{p}) = \id\oo_1\tilde{p} + \sum_{i} \tilde{p}\oo_i\id + \sum_{l} \sigma_l \left( \tilde{p}_{1,l} \oo_{i_l} \tilde{p}_{2,l} \right),
\end{gather}
where $\tilde{p}_{1,l},\tilde{p}_{2,l}\in\ol{\Kdual{\oP}}$, and then
\begin{align}
\Delta_{(1)}(\tilde{\phi}\tilde{p}) &= \tilde{\phi}\oo_1\tilde{p} + \sum_{i}(-1)^{(1+k)\dg{\tilde{p}}} \tilde{p}\oo_i\tilde{\phi} + {}\nonumber \\
&\phantom{= } + \sum_{l} \sigma_l \left( \tilde{\phi}\tilde{p}_{1,l} \oo_{i_l} \tilde{p}_{2,l} \right) + \sum_{l} (-1)^{(1+k)\dg{\tilde{p}_{1,l}}} \sigma_l \left( \tilde{p}_{1,l} \oo_{i_l} \tilde{\phi}\tilde{p}_{2,l} \right). \label{EQInfCoprod}
\end{align}
By definition, $\Omega(\Kdual{(\oDP{k})}) = \Fr{\upar\ol{\Kdual{(\oDP{k})}}}$ and hence its generators are $\upar\tilde{p}$ and $\upar\tilde{\phi}\tilde{p}$.
By the above calculation of $\Delta_{(1)}$, the differential of $\Omega(\Kdual{(\oDP{k})})$ is:
\begin{align*}
\dd\left(\upar\tilde{p}\right) &= \sum_{l} (-1)^{\dg{\tilde{p}_{1,l}}} \sigma_l \left( \upar\tilde{p}_{1,l} \oo_i \upar\tilde{p}_{2,l} \right), \\
\dd\left(\upar\tilde{\phi}\tilde{p}\right) &= (-1)^{1+k} \upar\tilde{\phi}\oo_1\upar\tilde{p} + (-1)^{(1+k)\dg{\tilde{p}}+\dg{\tilde{p}}} \sum_{i} \upar\tilde{p}\oo_i\upar\tilde{\phi} + {} \\
&\phantom{= } + \sum_{l} (-1)^{\dg{\tilde{\phi}\tilde{p}_{1,l}}} \sigma_l \left( \upar\tilde{\phi}\tilde{p}_{1,l} \oo_{i_l} \upar\tilde{p}_{2,l} \right) + {} \\
&\phantom{= } + \sum_{l} (-1)^{(1+k)\dg{\tilde{p}_{1,l}}+\dg{\tilde{p}_{1,l}}} \sigma_l \left( \upar\tilde{p}_{1,l} \oo_{i_l} \upar\tilde{\phi}\tilde{p}_{2,l} \right).
\end{align*}
We define an operadic derivation $s:\Fr{\upar\ol{\Kdual{\oP}}}\to\Fr{\upar\ol{\Kdual{(\oDP{k})}}}$ on generators by
$$s(\upar\tilde{p}):=\upar\tilde{\phi}\tilde{p}.$$
Then the formula for $\dd(\upar\tilde{\phi}\tilde{p})$ turns into
\begin{align*}
\dd\left(\upar\tilde{\phi}\tilde{p}\right) &= (-1)^{1+k} \upar\tilde{\phi}\oo_1\upar\tilde{p} + (-1)^{1+k+1+k\dg{p}} \sum_{i} \upar\tilde{p}\oo_i\upar\tilde{\phi} + {} \\
&\phantom{= } + (-1)^{1+k}s(\dd\left(\upar\tilde{p}\right)).
\end{align*}
After comparing with \eqref{DiffOnoDR} for $X:=\upar\ol{\Kdual{\oP}}$, it is obvious that the operad $\Omega(\Kdual{(\oDP{k})})$ is isomorphic to $\oDR{k}$ via $\upar\tilde{\phi}\mapsto(-1)^{1+k}\phi$.
\end{proof}

\subsection{Coderivation description}

In the second part of Appendix, we prove a generalization of Propositions \ref{PROPAinftyHomDerAsCoder} and \ref{PROPLinftyHomDer}:

\begin{proposition} \label{PROPHomDerAsCoder}
Let $G$ be a $\Sigma$ module such that $\Fr{\downar G\op\downar\Phi}$ is of finite type.
Let $\oP$ be a Koszul operad generated by $G$.
Let $\boldsymbol{a}$ be a degree $1$ coderivation of the cofree conilpotent $\Kdual{\oP}$ coalgebra $\Kdual{\oP}\oo A$ satisfying $[\boldsymbol{a},\boldsymbol{a}]=0$ and thus determining a $\oP_\infty$ algebra (including the differential) on a graded vector space $A$.
Then a degree $k$ homotopy derivations of the $\oP_\infty$ algebra are equivalently described as degree $k$ coderivations $\boldsymbol{\theta}$ of the coalgebra $\Kdual{\oP}\oo A$ satisfying
$$[\boldsymbol{a},\boldsymbol{\theta}]=0.$$
\end{proposition}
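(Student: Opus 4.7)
The plan is to mimic the proofs of Propositions \ref{PROPAinftyHomDerAsCoder} and \ref{PROPLinftyHomDer}, but now leveraging the full identification $\oDR{k} \cong \Omega(\Kdual{(\oDP{k})})$ from Proposition \ref{PROPCompareoDRwithKoszulII} in order to treat a general Koszul $\oP$ uniformly.

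First, by Proposition \ref{PROPCompareoDRwithKoszulII} combined with the standard bar--cobar adjunction (Chapter $11$ of \cite{LV}), a morphism of operads $\oDR{k} \to \oEnd{A}$ is the same data as a twisting morphism $\Kdual{(\oDP{k})} \to \oEnd{A}$, and the latter corresponds bijectively to a degree $1$ coderivation $\boldsymbol{c}$ of the cofree conilpotent $\Kdual{(\oDP{k})}$-coalgebra $\Kdual{(\oDP{k})} \oo A$ satisfying $\boldsymbol{c}^2 = 0$. Under this dictionary, the strong homotopy derivation of a given $\oP_\infty$-algebra structure is precisely the ``extra data'' in $\boldsymbol{c}$ beyond $\boldsymbol{a}$.

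Second, exploit the $\Sigma$-module splitting $\Kdual{(\oDP{k})} \cong \Kdual{\oP} \op \upar^{k-1}\Kdual{\oP}$ established in the proof of Proposition \ref{PROPCompareoDRwithKoszulII}, writing elements of the two summands as $\tilde{p}$ and $\tilde{\phi}\tilde{p}$. The cofree coalgebra $\Kdual{(\oDP{k})} \oo A$ splits correspondingly, and since any coderivation is determined by its corank-one projection to $A$, the coderivation $\boldsymbol{c}$ is encoded by a pair of maps, one of which is the degree $1$ projection $a : \Kdual{\oP} \oo A \to A$ determining $\boldsymbol{a}$, and the other a degree $1$ map $\upar^{k-1}\Kdual{\oP} \oo A \to A$ which, after undoing the suspension, is the same as a degree $k$ map $\Kdual{\oP} \oo A \to A$ and thus extends to a degree $k$ coderivation $\boldsymbol{\theta}$ of $\Kdual{\oP} \oo A$. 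This sets up the required bijection at the level of data.

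Third, expand $\boldsymbol{c}^2 = 0$ using the infinitesimal coproduct formulas \eqref{EQInfCoprodP} and \eqref{EQInfCoprod} and project along the decomposition. The crucial observation, already used in the proof of Proposition \ref{PROPCompareoDRwithKoszulII}, is the vanishing $\ol{\phi}\ol{p}_1 \oo_i \ol{\phi}\ol{p}_2 = 0$ in $\Kdual{(\oDP{k})}^\#$: it guarantees that $\boldsymbol{\theta}$ cannot compose with itself, so no quadratic $\boldsymbol{\theta}\boldsymbol{\theta}$ term appears in $\boldsymbol{c}^2$. Consequently $\boldsymbol{c}^2 = 0$ decouples into the $\tilde{p}$-component, which is $\boldsymbol{a}^2 = 0$ and is already built into the hypothesis $[\boldsymbol{a},\boldsymbol{a}]=0$, and the $\tilde{\phi}\tilde{p}$-component, which after undoing the suspension becomes $[\boldsymbol{a}, \boldsymbol{\theta}] = 0$.

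The main obstacle is the sign bookkeeping in the last step: one must verify that the Koszul sign $(-1)^{(1+k)\dg{\tilde{p}}}$ appearing in the mixed terms of \eqref{EQInfCoprod}, combined with the cost of moving the suspension $\upar^{k-1}$ past $\boldsymbol{a}$ and out of the projection, reassembles into precisely the sign $-(-1)^k$ in the graded commutator $[\boldsymbol{a}, \boldsymbol{\theta}] = \boldsymbol{a}\boldsymbol{\theta} - (-1)^k \boldsymbol{\theta}\boldsymbol{a}$. This is mechanical but delicate, exactly mirroring the sign check carried out in the proofs of Propositions \ref{PROPAinftyHomDerAsCoder} and \ref{PROPLinftyHomDer}; once it is fixed, both directions of the bijection follow from the reversibility of the bar--cobar dictionary.
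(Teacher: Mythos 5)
Your proposal is correct and follows essentially the same route as the paper: both rest on the splitting $\Kdual{(\oDP{k})}\cong\Kdual{\oP}\op\upar^{k-1}\Kdual{\oP}$ from Proposition \ref{PROPCompareoDRwithKoszulII}, the fact that the infinitesimal coproduct of the suspended summand lands only in mixed terms (your observation that no $\btheta\btheta$ term can occur), and the desuspension $s$ turning the second component into a degree $k$ coderivation with $[\boldsymbol{a},\btheta]=0$. The only difference is presentational: the paper carries out the computation in the convolution pre-Lie algebra $\Hom_\Sigma(\Kdual{(\oDP{k})},\oEnd{A})$ and transfers to coderivations at the end via the iso \eqref{EQIsoDGOpCoDer}, whereas you work with the squared coderivation on $\Kdual{(\oDP{k})}\oo A$ directly — these are the same argument under that isomorphism, and the sign verification you defer is exactly the commutative-diagram check the paper performs.
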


The technical assumption on generators of $\oP$ is needed to use the results from the previous part of Appendix.

Following $10.1.17$ of \cite{LV}, we now recall the required theory.

Let $\oQ$ be a Koszul operad, let $(A,d)$ be a dg vector space.
Let $\epsilon:\Kdual{\oQ}\to\fld$ be the counit and denote $1\in\Kdual{\oQ}$ the image of $1\in\fld$ under the coaugmentation $\fld\to\Kdual{\oQ}$.
Denote $\Hom_\Sigma(\Kdual{\oQ},\oEnd{A})$ the graded vector space of all $\Sigma$ module morphisms $\Kdual{\oQ}\to\oEnd{A}$ which needn't commute with differentials.
Recall that $\oQ_\infty$ algebras on $(A,d)$, i.e. operad morphisms $\Fr{\upar\ol{\Kdual{\oQ}}}\to\oEnd{A}$, are in bijection with elements $\alpha\in\Hom_\Sigma(\Kdual{\oQ},\oEnd{A})$ satisfying
\begin{gather} \label{EQTwistingMor}
\dg{\alpha}=1, \quad \alpha(1)=d, \quad \alpha*\alpha=0.
\end{gather}
Here $*$ is the convolution pre-Lie product given by
\begin{gather} \label{EQDefConv}
\alpha_1*\alpha_2:=\gamma_{(1)}(\alpha_1\oo_{(1)}\alpha_2)\Delta_{(1)}.
\end{gather}
Denote $\CoDer(\Kdual{\oQ}\oo A)$ the graded vector space of all coderivations $\Kdual{\oQ}\oo A\to \Kdual{\oQ}\oo A$ of the cofree conilpotent $\Kdual{\oQ}$ coalgebra $\Kdual{\oQ}\oo A$.
There is an iso
\begin{gather} \label{EQIsoDGOpCoDer}
\Hom_\Sigma(\Kdual{\oQ},\oEnd{A}) \cong \CoDer(\Kdual{\oQ}\oo A).
\end{gather}
The pre-Lie convolution induces a Lie structure on $\Hom_\Sigma(\Kdual{\oQ},\oEnd{A})$.
The associative composition of maps induces a Lie structure on $\CoDer(\Kdual{\oQ}\oo A)$.
\eqref{EQIsoDGOpCoDer} is in fact an iso of graded Lie algebras.
Finally, this iso takes $\alpha$'s in $\Hom_\Sigma(\Kdual{\oQ},\oEnd{A})$ satisfying \eqref{EQTwistingMor} (notice $\alpha*\alpha=\frac{1}{2}[\alpha,\alpha]$) onto $\boldsymbol{\alpha}$'s in $\CoDer(\Kdual{\oQ}\oo A)$ satisfying 
$$\dg{\boldsymbol{\alpha}}=1, \quad (\epsilon\oo\id)\boldsymbol{\alpha}(1)=d, \quad [\boldsymbol{\alpha},\boldsymbol{\alpha}]=0.$$
Thus $\oQ_\infty$ algebras on a graded vector space $A$ are equivalently described by degree $1$ coderivations $\boldsymbol{\alpha}:\Kdual{\oQ}\oo A \to \Kdual{\oQ}\oo A$ satisfying $[\boldsymbol{\alpha},\boldsymbol{\alpha}]=0$.

\begin{proof}[Proof of Proposition \ref{PROPHomDerAsCoder}]
We apply the above for $\oQ=\oDP{k}\cong\Kdual{\oP}\op\upar^{k-1}\Kdual{\oP}$ with the cooperad structure described in \eqref{EQInfCoprodP} and \eqref{EQInfCoprod}, in particular
\begin{gather*}
\Delta_{(1)}(\Kdual{\oP}) \subset \Kdual{\oP} \oo_{(1)} \Kdual{\oP}, \\ %\Kdual{\oP} \To{\Delta_{(1)}} \Kdual{\oP} \oo_{(1)} \Kdual{\oP}, \\
\Delta_{(1)}(\upar^{k-1}\Kdual{\oP}) \subset \upar^{k-1}\Kdual{\oP} \oo_{(1)} \Kdual{\oP} \ \op \ \Kdual{\oP} \oo_{(1)} \upar^{k-1}\Kdual{\oP}. %\upar^{k-1}\Kdual{\oP} \To{\Delta_{(1)}} \upar^{k-1}\Kdual{\oP} \oo_{(1)} \Kdual{\oP} \ \op \ \Kdual{\oP} \oo_{(1)} \upar^{k-1}\Kdual{\oP}.
\end{gather*}
Let $\alpha\in\Hom_\Sigma(\Kdual{\oQ},\oEnd{A})$ satisfy $\dg{\alpha}=1$ and 
$$0=\alpha*\alpha.$$
Let $a:\Kdual{\oP}\to\oEnd{A}$ resp. $b:\upar^{k-1}\Kdual{\oP}\to\oEnd{A}$ be restrictions of $\alpha$.
We get 
$$(\alpha*\alpha)|_{\Kdual{\oP}}=a*a,$$
where the convolution product on the right is calculated in the coalgebra $\Kdual{\oP}\oo A$.
Thus $0=[a,a]$. 
Under the iso \eqref{EQIsoDGOpCoDer}, $a$ becomes a degree $1$ coderivation $\boldsymbol{a}$ on $\Kdual{\oP}\oo A$ such that $[\boldsymbol{a},\boldsymbol{a}]=0$.

There is a degree $k-1$ iso $s:\Kdual{\oP}\to\uparrow^{k-1}\Kdual{\oP}$ of $\Sigma$ modules given by $s(\tilde{p})=\tilde{\phi}\tilde{p}$.
Let $\theta:=bs:\Kdual{\oP}\to\oEnd{A}$.
Consider
$$\begin{tikzcd}
\uparrow^{k-1}\Kdual{\oP} \arrow{rr}{\Delta_{(1)}} & & \uparrow^{k-1}\Kdual{\oP}\oo_{(1)}\Kdual{\oP} \ \op \ \Kdual{\oP}\oo_{(1)}\uparrow^{k-1}\Kdual{\oP} \arrow{d}{s^{-1}\oo_{(1)}\id+\id\oo_{(1)}s^{-1}} \\
\Kdual{\oP} \arrow{u}{s} \arrow{r}{\Delta_{(1)}} & \Kdual{\oP}\oo_{(1)}\Kdual{\oP} \arrow{r}{\textrm{diag}} & \Kdual{\oP}\oo_{(1)}\Kdual{\oP} \ \op \ \Kdual{\oP}\oo_{(1)}\Kdual{\oP} \arrow{d}{(-1)^{k-1}\theta\oo_{(1)}a+a\oo_{(1)}\theta} \\
 & \oEnd{A} & \oEnd{A}\oo_{(1)}\oEnd{A} \arrow{l}[swap]{\gamma_{(1)}}
\end{tikzcd}$$
The square commutes by direct calculation using \eqref{EQInfCoprod}.
Going from $\Kdual{\oP}$ to $\oEnd{A}$ both ways, we get
$$(\alpha*\alpha)|_{\uparrow^{k-1}\Kdual{\oP}}\ s = (-1)^{k-1}\theta*a+a*\theta,$$
where the the convolution product on the right is calculated in the coalgebra $\Kdual{\oP}\oo A$.
Thus $0=[a,\theta]$.
Under the iso \eqref{EQIsoDGOpCoDer}, $\theta$ becomes a degree $k$ coderivation $\boldsymbol{\theta}$ on $\Kdual{\oP}\oo A$ such that $0=[\boldsymbol{a},\boldsymbol{\theta}]$.

It is easy to verify that the inverse construction, starting with $\boldsymbol{a},\boldsymbol{\theta}$ and producing $\alpha$, is determined by the same formulas as above.
\end{proof}

\begin{remark}
If we apply Proposition \ref{PROPHomDerAsCoder} for $\oP=\oAss$, then we get coderivations $\boldsymbol{a}$ and $\boldsymbol{\theta}$ on $\Kdual{\oAss}\oo A$ rather than on $T^c(V) \cong \oAss^{\#}\oo\downar A$ as claimed in Proposition \ref{PROPAinftyHomDerAsCoder}.
These two descriptions are related as follows:

From $7.2.2$ of \cite{LV}, we recall that the operadic suspension of an operad $\oE$ is the Hadamard product $\oS\ot\oE$, where $\oS$ is the endomorphism operad on the $1$ dimensional dg vector space $\downar\fld$ concentrated in degree $-1$.
Similarly, there is the operadic suspension $\oS\ot\oC$ of a cooperad $\oC$.
Then it is easy to verify that
$$\Hom_\Sigma(\oC,\oE)\cong\Hom_\Sigma(\oS\ot\oC,\oS\ot\oE)$$
as graded pre-Lie algebras with convolution as in \eqref{EQDefConv}.

In particular, $\Hom_\Sigma(\Kdual{\oP},\oEnd{A}) \cong \Hom_\Sigma(\oS\ot\Kdual{\oP},\oS\ot\oEnd{A})$.
Recall that there is an iso $\oS\ot\oEnd{A} \cong \oEnd{\downar* A}$ of operads.
Finally, recall that the Koszul dual operad of $\oP$ is defined to be $\oP^{!}:=(\oS\ot\Kdual{\oP})^{\#}$ if $\oP$ is of finite type.
Thus 
$$\Hom_\Sigma(\Kdual{\oP},\oEnd{A}) \cong \Hom_\Sigma((\oP^{!})^{\#},\oEnd{\downar* A})$$
as graded Lie algebras.
Hence there is a bijection between degree $1$ coderivations $\boldsymbol{a}$ satisfying $[\boldsymbol{a},\boldsymbol{a}]=0$ on $\Kdual{\oP}\oo A$ and on $(\oP^{!})^{\#}\oo\oEnd{\downar* A}$.
Similarly, there is a degree preserving bijection between coderivations $\boldsymbol{\theta}$ satisfying $[\boldsymbol{a},\boldsymbol{\theta}]=0$ on $\Kdual{\oP}\oo A$ and on $(\oP^{!})^{\#}\oo\oEnd{\downar* A}$.

Applying this to $\oP=\oAss$ resp. $\oLie$ yields Proposition \ref{PROPAinftyHomDerAsCoder} (since $\oAss^{!}=\oAss$) resp. Proposition \ref{PROPLinftyHomDer} (since $\oLie^{!}=\oCom$ and $\oCom^{\#}\oo V \cong S^c(V)$, where $V=\downar A$), at least for $k\neq 1$.
\end{remark}

\begin{example}
Let $\boldsymbol{a}$ be a coderivation as in Proposition \ref{PROPHomDerAsCoder}.
Let $\boldsymbol{\theta}:=\boldsymbol{a}$.
Then $[\boldsymbol{a},\boldsymbol{\theta}]=0$ and we recover Example \ref{EXTautologicalHomDer}.
\end{example}

%%%%%%%%%%%
%%%%%%%%%%%
%%%%%%%%%%%


\begin{thebibliography}{99}
\bibitem{BM} C.~Berger and I.~Moerdijk, \textit{Axiomatic homotopy theory for operads},
	Comment. Math. Helv. 78 (2003), no. 4, 805–-831.
	
\bibitem{GS} M.~Doubek, \textit{Gerstenhaber-Schack diagram cohomology from operadic point of view},
	Journal of Homotopy and Related Structures 7(2) (2012), 165--206;
	\texttt{arXiv:1101.1896 [math.AT]}.
	
\bibitem{DCV} G.~C.~Drummond-Cole and B.~Vallette, \textit{The minimal model for the Batalin-Vilkovisky operad},
	Selecta Math. (N.S.) 19 (2013), no. 1, 1–-47;
	\texttt{arXiv:1105.2008 [math.KT]}.
	
\bibitem{G}
M. Gerstenhaber, {\em The cohomology structure of an associative ring}. Annals of Math Vol 78, No. 2 (1963), 267--288.

\bibitem{KS}
H. Kajiura and J. Stasheff, {\em Homotopy algebras inspired by classical open-closed string field theory}. Comm. Math. Phys. $\mathbf{263}$(2006), No 3, 553--581;
 	\texttt{arXiv:math/0410291 [math.QA]}.

\bibitem{L}
T. Lada, {\em Commutators of $A_\infty$ structures}. Contemporary Mathematics, (1999),  227--233.

\bibitem{LM}
T. Lada and M. Markl, {\em Strongly homotopy Lie algebras}. Comm. Algebra $\mathbf{23}$(1995), No. 6, 2147--2161;
 	\texttt{arXiv:hep-th/9406095}.

\bibitem{LS}
T. Lada and J. Stasheff, {\em Introduction to SH Lie algebras for physicists}.  Internat. J. Theoret. Phys. $\mathbf{32}$(1993), No. 7, 1087--1103.

\bibitem{LT}
T. Lada and M. Tolley, {\em Derivations of homotopy algebras}. Archivum Mathematicum (Brno) Tomus 49 (2013), 309--315
	
\bibitem{LD} J.~L.~Loday, \textit{On the operad of associative algebras with derivation},
	Georgian Math. J. 17 (2010), no. 2, 347–-372.
	
\bibitem{LV} J.~L.~Loday and B.~Vallette, \textbf{\textit{Algebraic operads}},
	Grundlehren der Mathematischen Wissenschaften 346, Springer, Heidelberg, 2012. xxiv+634 pp., ISBN 978-3-642-30361-6.
	
\bibitem{MHAAHA} M.~Markl, \textit{Homotopy algebras are homotopy algebras},
	Forum Math. 16 (2004), no. 1, 129–-160;
	\texttt{arXiv:math/9907138v3 [math.AT]}.
	
\bibitem{MHAVRO} M.~Markl, \textit{Homotopy algebras via resolutions of operads},
	The Proceedings of the 19th Winter School ``Geometry and Physics'' (Srn\'i, 1999),
	Rend. Circ. Mat. Palermo (2) Suppl. 2000, no. 63, 157--164;
	\texttt{arXiv:math/9808101v2 [math.AT]}.
	
\bibitem{IB} M.~Markl, \textit{Intrinsic brackets and the $L_\infty$-deformation theory of \mbox{bialgebras}},
	Journal of Homotopy and Related Structures 5(1) (2010), 177--212;
	\texttt{arXiv:math/0411456v7 [math.AT]}.

\bibitem{MR}
M. Markl and E. Remm, {\em (Non-)Koszulness of operads for n-ary algebras, galgalim and other curiosities}, Preprint \texttt{arXiv:0907.1505v2 [math.AT]}, August 2011.	
	
\bibitem{MSS} M.~Markl and S.~Shnider and S.~Stasheff, \textbf{\textit{Operads in Algebra, Topology and Physics}},
	Mathematical Surveys and Monographs 96, American Mathematical Society, Providence, RI, 2002, x+349 pp., ISBN 0-8218-2134-2.

\bibitem{MerkVall} S.~Merkulov and B.~Vallette, \emph{Deformation theory of representations of prop(erad)s. I},
	J. Reine Angew. Math. 634 (2009), 51–-106;
	\texttt{arXiv:0707.0889 [math.QA]}.

\bibitem{S}
J. Stasheff, {\em Homotopy associativity of H-spaces II}, Trans. Am. Math. Soc. $\mathbf{108}$(1963), 293--312.

\bibitem{T}
M. Tolley, {\em The Connections between $A_\infty$ and $L_\infty$ Algebras}, NCSU PhD dissertation, 2013.

\end{thebibliography}
\end{document}